\newcommand{\A}{\mathcal{A}}
\newcommand{\oA}{\overline{\A}}
\newcommand{\E}{\mathbf{E}}
\newcommand{\F}{\mathbf{F}}
\newcommand{\Z}{\mathbb{Z}}
\newcommand{\N}{\mathbb{N}}
\newcommand{\Q}{\mathbb{Q}}
\renewcommand{\S}{\mathbf{S}}
\newcommand{\OFS}{\mathcal{O}_{F,\mathcal{S}}}
\newcommand{\Gm}{\mathbb{G}_m}
\newcommand{\MSL}{\mathbf{MSL}}
\newcommand{\GW}{\mathbf{GW}}
\newcommand{\W}{\mathbf{W}}
\newcommand{\C}{\mathbb{C}}
\newcommand{\HZ}{\mathbf{H}\Z}
\newcommand{\cHZ}{\wt{\mathbf{H}}\Z}
\newcommand{\wHZ}{\mathbf{H}\widetilde{\Z}}
\newcommand{\HW}{\mathbf{H}W}
\newcommand{\oH}{\overline{H}}
\newcommand{\R}{\mathbb{R}}
\newcommand{\SH}{\mathcal{SH}}
\newcommand{\Top}{{\mathcal{T}op}}
\newcommand{\Hom}{\operatorname{Hom}}
\newcommand{\RHom}{\operatorname{RHom}}
\newcommand{\Ext}{\operatorname{Ext}}
\newcommand{\Tor}{\operatorname{Tor}}
\renewcommand{\lim}{\operatorname{lim}}
\newcommand{\hoeq}{\operatorname{hoeq}}
\newcommand{\cofib}{\operatorname{cofib}}
\newcommand{\tensor}{\otimes}
\newcommand{\cotensor}{\square}
\newcommand{\dual}{\vee}
\newcommand{\Sq}{\operatorname{Sq}}
\newcommand{\wt}[1]{\widetilde{#1}}
\newcommand{\ol}[1]{\bar{#1}}
\newcommand{\ul}[1]{\underline{#1}}
\newcommand{\et}{\acute{e}t}
\newcommand{\im}{\operatorname{im}}
\newcommand{\coker}{\operatorname{coker}}
\newcommand{\pr}{\operatorname{pr}}
\newcommand{\vcd}{\operatorname{vcd}}
\newcommand{\cd}{\operatorname{cd}}
\newcommand{\f}{\mathsf{f}}
\newcommand{\eff}{\text{eff}}
\newcommand{\heart}{\heartsuit}
\newcommand{\sep}{\text{sep}}
\renewcommand{\et}{\text{\'{e}t}}
\renewcommand{\star}{{\ast,\ast}}
\newcommand{\twocomp}{{}^{{\kern -.7pt}\wedge}_2}
\newcommand{\ellcomp}{{}^{{\kern -.5pt}\wedge}_{\ell}}
\newcommand{\vsep}{\,\vert\,}
\theoremstyle{theoremstyle}
\newtheorem{theorem}{Theorem}[section]
\newtheorem*{theorem*}{Theorem}
\newtheorem{lemma}[theorem]{Lemma}
\newtheorem{proposition}[theorem]{Proposition}
\newtheorem*{proposition*}{Proposition}
\newtheorem{corollary}[theorem]{Corollary}
\newtheorem*{corollary*}{Corollary}
\newtheorem{remark}[theorem]{Remark}
\newtheorem{remark*}{Remark}
\newtheorem{defn*}{Definition}
\theoremstyle{definition}
\theoremstyle{theoremstyle}
\thanks{The authors were partially supported by the RCN Frontier Research Group Project no.~250399.}
\begin{document}
\title{Strong convergence in the motivic Adams spectral sequence}
\subjclass[2010]{14F42 (primary), 55S10, 55T15 (secondary)}
\keywords{Motivic homotopy theory, motivic Adams spectral sequence, motivic cohomology, motivic stable homotopy group}

\author{Jonas Irgens Kylling}
\address{Department of Mathematics, University of Oslo, Norway}
\email{jonasik@math.uio.no}
\author{Glen Matthew Wilson}
\address{Department of Mathematical Sciences, Norwegian University of Science and Technology, Norway}
\email{glen.m.wilson@ntnu.no}

\begin{abstract}
We prove strong convergence results for the motivic Adams spectral sequence of the sphere spectrum over fields with finite virtual cohomological dimension at the prime $2$, and over arbitrary fields at odd primes.
We show that the motivic Adams spectral sequence is not strongly convergent over number fields.
As applications we give bounds on the exponents of the $(\ell,\eta)$-completed motivic stable stems,
and calculate the zeroth $(\ell,\eta)$-completed motivic stable stems.
\end{abstract}
\maketitle

\tableofcontents

\section{Introduction}

The analogue of the Adams spectral sequence in motivic homotopy theory has been used successfully to calculate many interesting motivic invariants.
However, the convergence properties of the motivic Adams spectral sequence are more subtle than in topology, which has limited its use to fairly specialized situations.
Not even for the motivic sphere spectrum is strong convergence of the motivic Adams spectral sequence over an arbitrary base field guaranteed.

The goal of this work is to study the vanishing of the derived $E_\infty$-term of the motivic Adams spectral sequence of the sphere spectrum over a general field.
This vanishing is the crucial ingredient to obtain strong convergence from conditional convergence, following Boardman \cite{Boardman}.
For formal reasons the abutment of the motivic Adams spectral sequence is the $H$-completion of the target spectrum, where $H$ is the mod $\ell$ motivic cohomology spectrum.
The $H$-completion was identified to be the $(\ell,\eta)$-completion of the target spectrum by Hu, Kriz and Ormsby \cite{HKO:adams} under some strict bounded cellularity assumptions on the target spectrum.
Recently a different proof was given by Mantovani \cite{Mantovani} which drops the cellularity assumption.
Combined with vanishing of the derived $E_\infty$-term this implies strong convergence to the homotopy groups of the $(\ell,\eta)$-completed spectrum.

In topology it is usually sufficient for the target spectrum to satisfy some finiteness assumptions on its cohomology for the Adams spectral sequence to be strongly convergent.
Working at the prime 2, this assumption is usually sufficient in motivic homotopy theory over fields with finitely many square classes as well.
But there are many interesting fields with infinitely many square classes,
for instance the rationals, over which we would like to compute motivic invariants.

One approach could be to consider the $E_2$-page not as an $\F_2$-module,
but as a module over mod 2 Milnor $K$-theory $K^M_*(F)/2$, and then hope that the successive $E_r$-pages remain finitely generated $K^M_*(F)/2$-modules.
However, if $K^M_*(F)/2$ is not Noetherian (equivalently, there are infinitely many square classes),
then kernels of finitely generated $K^M_*(F)/2$-modules need not be finitely generated.
And indeed this issue occurs in practice when computing the $E_2$-page for the motivic sphere spectrum over the rationals with the $\rho$-Bockstein spectral sequence.
Over $\Q$ the $E_1$-page is a finitely generated $K^M_*(\Q)/2$-module.
However, on some groups the $d_1$-differential is given by multiplication by $\rho$,
hence the kernel contains a copy of the $\rho$-torsion in $K^M_*(\Q)/2$,
which is an infinitely generated $K^M_*(\Q)/2$-module.

What saves the day are vanishing regions in the $E_2$-page analogous to the vanishing lines we have in topology.
For a good motivic spectrum $X$ the $E_2$-page of the motivic Adams spectral sequence is $\Ext_{\A_\star^F}(H_\star(F;\Z/2), H_\star(X;\Z/2))$.
These $\Ext$-groups are rather computable,
and most of this paper is concerned with their properties.
For the motivic sphere spectrum over the real numbers at the prime 2, Guillou and Isaksen have shown that there are vanishing regions for $\Ext^{s,(t,w)}_{\A_\star^\R}(H_\star(\R;\Z/2),H_\star(\R;\Z/2))$ when $t - s > 0$ and $t - s - w \neq 0$.
However when $t - s = 0$ and $t - s - w \geq 0$ is even there is an infinite $h_0$-tower in the $\Ext$-group.
Over a general field this infinite $h_0$-tower is tensored with mod 2 Milnor $K$-theory, and is spread out in the cone $w \leq t - s \leq 0$ on the $E_2$-page.
Inside of this cone there can be infinitely many differentials exiting a particular tridegree of the motivic Adams spectral sequence, and it is not clear if the derived $E_\infty$-terms vanish here.
With the help of the extension
\begin{align*}
0
\to \Ext^{s}_{\A_\star^\R}(H_\star^\R, H_\star^\R(\S)) \tensor_{\Z/2[\rho]} k_*
&\to \Ext^{s}_{\A_\star}(H_\star, H_\star(\S)) \\
&\to \Tor_1^{\Z/2[\rho]}(\Ext^{s+1}_{\A_\star^\R}(H_\star^\R, H_\star^\R(\S)), k_*)
\to 0
\end{align*}
we extend the vanishing lines of Guillou and Isaksen to general fields.
This is where the assumption on the finiteness of the virtual cohomological dimension of the base field enters, since if not the outer terms in the extension can be nonzero for fixed stem and weight and any $s$.
At odd primes we use a Bockstein spectral sequence to extend the topological vanishing lines of Adams to the motivic $\Ext$-groups.

At odd primes in high Adams filtration the $E_r$-pages of the motivic Adams spectral sequences for the sphere spectrum and the motivic cohomology spectrum are the same. Thus we would expect there to be infinitely many differentials leaving a particular tridegree over special base fields (of course, this is only necessary for the derived $E_\infty$-term to be nonzero).
Hence it seems reasonable to doubt that the motivic Adams spectral sequence is strongly convergent in general.
In \Cref{cor:S-notconv} we show that the motivic Adams spectral sequence is not strongly convergent over number fields.
However, with the help of vanishing regions in the $\Ext$-groups we can show strong convergence for the motivic Adams spectral sequence for the sphere spectrum in positive stems.
For $\S/\ell^n$ the outlook is not as grim, and we prove strong convergence for $\S/\ell^n$ over any field of characteristic not $\ell$ with finite virtual cohomological dimension if $\ell = 2$ in \Cref{cor:S/elln-conv}.
This suggests a general strategy for adapting classical proofs with the Adams spectral sequence to the motivic setting. Prove the required properties for $\S/\ell^n$ and then pass to the limit over $n$, assuming that the properties are well behaved.

Motivic cohomology is often considered a generalization of singular cohomology to smooth schemes. However, other generalizations are possible; a notable one is the generalized motivic cohomology spectrum $\cHZ$ of Bachmann \cite{Bachmann:veff}.
This spectrum is a closer approximation to the motivic sphere spectrum.
In particular they have the same zeroth motivic homotopy group.
For us the likeness manifests itself in their $\Ext$-groups.
It is simpler to describe the vanishing region as an isomorphism of $\Ext$-groups in high filtration between the motivic sphere spectrum and $\cHZ$,
and then compute the $\Ext$-group of $\cHZ$.
As part of this we compute the motivic cohomology of several motivic spectra and homotopy modules related to $\cHZ$.
This seems to be an interesting computation in itself.
For instance, motivic cohomology of the Witt $K$-theory sheaf is
$H^\star(\ul{K}_{*}^W) = \Sigma^{1,-1}\A^\star/\A^\star(\tau, \Sq^2 + \rho\Sq^1)$,
which as far as we know has no classical analogue (its complex realization is zero).

As a simple corollary of our computations we give a bound on the exponent of the torsion in the completed motivic stable stems.
For instance, in positive stems the exponent of the $2$-torsion in $\pi_{t,w}(\S_{2,\eta}^\wedge)$ is bounded by
$2^{\max\{\lceil(t-w + 1)/2\rceil, t + \vcd(F)\}}$.
This is a partial answer to a question in \cite{ALP}.
As a second application we compute the $(2,\eta)$-completed zero-line of the motivic stable stems in \Cref{sec:zero-line}.
This is essentially the same computation as done for $\pi_{0,0}(\S_{2,\eta}^{\wedge})$ by Morel in \cite{Morel:adams}.
The computation makes Morel's pull-back square \cite[Theoreme 5.3]{Morel:witt} appear less mysterious, since the pull-back square is suggested by the computation of the $\Ext$-group.

We hope this work will be useful to applications of the motivic Adams spectral sequence over general base fields.
Up to now, most work with the motivic Adams spectral sequence has been at the prime 2 over $\C$, $\R$, or other fields with finitely many square classes.

\subsection*{Previous work}
One of the first applications of the motivic Adams spectral sequence was Morel's proof of Milnor's conjecture on quadratic forms in \cite{Morel:adams}.
The proof consists of a computation of $\pi_{0,0}(\S_{2,\eta}^{\wedge})$.
A more systematic study of the motivic Adams spectral sequence was carried out by Dugger and Isaksen \cite{DI:adams}.
They calculate the $\C$-motivic stable stems in a large range.
Later Dugger and Isaksen \cite{DI:Real} computed the first four Milnor-Witt stems over the real numbers by using the $\rho$-Bockstein spectral sequence to calculate $\Ext_{\A_\star^\R}(H_\star(\R;\Z/2), H_\star(\R;\Z/2))$ in a range
and observing that the motivic Adams spectral sequence collapses in the first four Milnor-Witt stems.
In a sequel they establish a comparison between the real and the $C_2$-equivariant stable stems \cite{DI:Z2-R}.

Unlike in topology, the motivic Hopf map $\eta \in \pi_{1,1}(\S)$ is not nilpotent. 
This has spurred several investigations into the properties of the eta-inverted sphere $\S[\eta^{-1}]$. 
Ananyevskiy, Levine, and Panin \cite{ALP} study the $\eta$-inverted motivic sphere spectrum. As a corollary they show $\pi_{m+n,n}(\S)$ is torsion for $m > 0,n \geq 0$. We give bounds on the exponent of the torsion in \Cref{sec:bounds}.
Guillou and Isaksen \cite{GI:eta} work out the calculation of the $h_1$-inverted motivic Adams spectral sequence for the sphere spectrum over the complex numbers, assuming a certain pattern of differentials.
Andrews and Miller \cite{AM:eta} verify that the differentials conjectured by Guillou and Isaksen hold to give a complete calculation of $\pi_{**}(\S\twocomp[\eta^{-1}])$.
Guillou and Isaksen \cite{GI:Real} perform the same calculation, but over the real numbers. Along the way they establish a vanishing region in $\Ext_{\A_\star^\R}(H_\star(\R;\Z/2), H_\star(\R;\Z/2))$.
Wilson \cite{W:EtaRational} extends this calculation to the rational numbers $\Q$ and fields with 2-cohomological dimension at most 2. 
The calculation over $\Q$ is notable since there are infinitely many square classes in $\Q$, as $K^M_1(\Q)/2 = \Q^{\times}/2$ is generated by the classes $[-1]$ and $[p]$ for $p$ a prime.
The computation is still successful since it is possible to determine all the differentials in the spectral sequence.

Calculations of $\Ext$-groups over algebraically closed fields can be done with many of the same techniques as in topology, and also by comparison with topology by complex realization.
In this case, the motivic May spectral sequence can be used to great success, as the work of Isaksen, Dugger, and Guillou shows \cite{DI:adams}, \cite{DI:Real}, \cite{GI:eta}, \cite{Isaksen:A2}.
Unfortunately, the motivic May spectral sequence is not available over fields in which $-1$ is not a square. The main tool for computing $\Ext$-groups over such fields is the $\rho$-Bockstein spectral sequence.
The $\rho$-Bockstein spectral sequence was introduced by Hill \cite{Hill} to calculate the $\Ext$-groups of truncated Brown-Peterson spectra and the very effective connective cover of hermitian $K$-theory over the real numbers.
In \cite{Wilson:web} there are machine calculations and figures of $\Ext$-groups over various base fields in a range.

Ormsby and {\O}stv{\ae}r \cite{OP:low} use the motivic Adams-Novikov spectral sequence over fields of cohomological dimension at most 2 to compute the first stable stem of the sphere spectrum.
The low cohomological dimension avoids many of the complications encountered in this paper.
In \cite{OP:BP} they use the motivic Adams spectral sequence to compute the coefficients of motivic (truncated) Brown-Peterson spectra over the rationals. 
In this case it is possible to determine all the differentials exactly.
Wilson and {\O}stv{\ae}r \cite{WP:finite} compute $\pi_\star(\S)$ in a certain region over finite fields.
In \cite{Ormsby} Ormsby computes the coefficients of motivic (truncated) Brown-Peterson spectra over $p$-adic fields.
Because of the Teichm\"{u}ller lift $p$-adic fields have finitely many square classes.
In \cite{LYZ} Levine, Yang and Zhao use the motivic Adams spectral sequence to compute the coefficients of $\MSL_{\ell,\eta}^{\wedge}$ along the $(2n,n)$-diagonal over any perfect field. 

\subsection*{Organization of this paper}
We begin in \Cref{sec:recollection} with a quick recap of motivic cohomology, the motivic (dual) Steenrod algebra, the motivic Adams spectral sequence and its $E_2$-page. 
The main focus is on structure results of the motivic Steenrod algebra and motivic cohomology needed to describe the properties of the cobar complex and the $\Ext$-groups.
Next we use these tools in \Cref{sec:detour} to compute the mod 2 motivic cohomology of $\wHZ$ and related spectra defined by \cite{Bachmann:veff}.
These calculations form the input to \Cref{sec:2} where we study the mod 2 $\Ext$-groups over fields of finite virtual cohomological dimension.
Somewhat simpler are the mod $\ell$ $\Ext$-groups which are treated in \Cref{sec:odd} with a Bockstein spectral sequence.
Combining the two previous sections allow us to prove our main convergence results in \Cref{sec:strong}. 
Here we use the vanishing regions to show that the derived $E_\infty$-terms vanish in positive stems over arbitrary fields.
In nonnegative stems the vanishing of the derived $E_\infty$-term is in general unclear, but we prove strong convergence for $\S/\ell^n$.
With the hope of resolving the convergence in negative stems we study the $\ell$-Bockstein spectral sequence for motivic cohomology and its convergence properties in \Cref{sec:Bockstein}. 
This is the same as the mod $\ell$ motivic Adams spectral sequence for the motivic cohomology spectrum $\HZ$.
We show that the $\ell$-Bockstein spectral sequence is not strongly convergent, and as a corollary we get that the motivic Adams spectral sequence for the sphere spectrum is not strongly convergent over number fields.
We end with two short sections with applications to calculations of motivic homotopy groups. 
In \Cref{sec:bounds} we give bounds on the torsion in the $(\ell,\eta)$-completed stables stems. 
In \Cref{sec:zero-line} we calculate the zeroth motivic homotopy groups of the $(\ell,\eta)$-completed sphere spectrum.

\subsection*{Acknowledgments}
The authors are grateful to Ivan Panin and John Rognes for stimulating discussions and questions which prompted the current work.
Without the work of Lorenzo Mantovani on localizations and completions in motivic homotopy theory we would probably not have ventured to write this paper, and we thank him for explaining his work to us.
The authors thank Oliver R\"{o}ndigs for pointing out \Cref{rmk:Oliver},
and Paul Arne {\O}stv{\ae}r for helpful comments.

\subsection*{Notation}
Throughout the paper $\ell$ will be a fixed prime number.
We will work in the stable motivic homotopy category $\SH(F)$ over a base field $F$ of characteristic not $\ell$.
We do not require $F$ to be perfect unless explicitly stated.
The following table summarizes the notation used in the paper:
\begin{center}
\footnotesize
\begin{tabular}{l|l}
$\ell$, $F$ & a prime number, a field of characteristic not $\ell$ \\
$\SH(F)$ & motivic stable homotopy category \\
$X$, $\S$  & motivic spectrum, motivic sphere spectrum \\
$\wHZ$, $\HZ$, $\HW$ & (generalized) motivic cohomology, Witt motivic cohomology spectra \\
$\HZ/\ell = H$, $\ol{H}$ & mod $\ell$ motivic cohomology spectrum, $\cofib(\S \to H)$ \\
$H_{-p,-q} = H^{p,q} = H^{p,q}(F;\Z/\ell)$ & mod $\ell$ motivic cohomology of $F$ \\
$S^{p,q}, \Sigma^{p,q}(-)$ & $(S_s^1)^{p-q}\wedge \Gm^{\wedge q}$, $S^{p,q}\wedge-$ \\
$\pi_{p,q}(X)$ & $[S^{p,q}, X]_{\SH(F)}$ \\
$H^{*,*}(X)$ & $[X, \Sigma^{*,*}\HZ]_{\SH(F)}$, mod $\ell$ motivic cohomology of $X$ \\
$H_{*,*}(X)$ & $\pi_{*,*}(X\wedge\HZ)$, mod $\ell$ motivic homology of $X$ \\
$K^M_* = K^M_*(F)$, $k_*$ & Milnor $K$-theory of $F$, mod $\ell$ \\
$\ul{K}_{*}^{MW}$, $\ul{K}_{*}^{M}$, $\ul{K}_{*}^{W}$, $\ul{K}_{*}^{M}/2$ & Milnor-Witt-, Milnor-, Witt- and mod 2 Milnor $K$-theory sheaves \\
$h, \eta$ & hyperbolic plane, Hopf map \\
$\A^\star, \A_\star$, $\Delta$, $\epsilon$, $\eta_L, \eta_R$ & motivic Steenrod algebra, dual, structure maps of dual \\
$\beta$, $\Sq^i$, $P^i$ & Bockstein, motivic Steenrod squares, motivic power operations \\
$\tau$, $\rho$, $\xi_i$, $\tau_i$ & $H_\star$-algebra generators of $\A_\star$ \\
$\A^\star(1)$ & $H^\star$-subalgebra of $\A^\star$ generated by $\Sq^1$ and $\Sq^2$ \\
$E_r^{s,t,w}(X) = E_r^{s,t,w}$ & $E_r$-page of the motivic Adams spectral sequence of $X$ \\
$\Ext_{\A_\star}^{s,(t,w)}(H_\star, H_\star(X))$ & left $\A_\star$-comodule $\Ext$ of $H_\star(X)$ \\
$C^\bullet(H_\star, H_\star(X))$ & cobar complex computing $\Ext_{\A_\star}(H_\star, H_\star(X))$ \\
$s, t, w$ & cohomological filtration, topological degree, motivic weight \\
$t - s$, $t - s- w$ & stable stem, Milnor-Witt degree \\
$W(F)$, $I^{n}(F)$, $I^{n}$ & Witt ring of $F$, fundamental ideal of $W(F)$, $I^n = W(F), n \leq 0$ \\
$\cd_{\ell}(F)$, $\vcd(F) = \cd_2(F(\sqrt{-1}))$ & (virtual) $\ell$-cohomological dimension of $F$ \\
$\langle\!\langle a_1, \dots, a_n \rangle\!\rangle$ & Pfister form
\end{tabular}
\end{center}

\section{Recollections on mod $\ell$ motivic cohomology, the motivic Steenrod algebra and the motivic Adams spectral sequence}
\label{sec:recollection}
In this section we fix the notation used later and recall results on the structure of mod $\ell$ motivic cohomology, the motivic Steenrod algebra and the motivic Adams spectral sequence. As usual it is necessary to treat the prime 2 distinctly from the other primes.

Recall that
$\A^\star = [\HZ/\ell, \Sigma^\star \HZ/\ell]_{\SH(F)}$ and 
$\A_\star = \pi_{\star}(\HZ/\ell\wedge\HZ/\ell)$
are the motivic Steenrod algebra and its dual,
with coefficients $H^\star(F;\Z/\ell) = H^\star = H_{-*,-*}$.
The dual is a Hopf algebroid
with structure maps $\eta_L,\eta_R : \A_\star \to H_\star, \epsilon : \A_\star \to H_\star, \Delta: \A_\star \to \A_\star \tensor_{H_\star} \A_\star$ \cite{Voevodsky:power}, \cite{HKO:steenrod}.
Here the tensor product $-\tensor_{H_\star}-$ is formed by considering $\A_\star$ as a left $H_\star$-module with $\eta_L$ and as a right $H_\star$-module with $\eta_R$.
For any prime $\ell$ we have $H_{p,q} = 0$ for $p < q$ or $p > 0$,
and an identification with mod $\ell$ Milnor $K$-theory of the base field along the diagonal $H_{n,n} \cong k_{-n}(F)$.
Note that the subgroup $k_* \subset \A_\star$ is a central subalgebra.

A motivic spectrum is $\A_\star$-good if the canonical map $(\A_\star \tensor_{H_\star} H_\star(X))_{p,q} \to [S^{p, q}, H \wedge H \wedge X]$ is an isomorphism.
In this case $H_\star(X)$ is a left $\A_\star$-comodule by the map 
\[
\Psi_X : [S^{p,q}, H\wedge X]
\to [S^{p, q}, H \wedge H \wedge X]
\cong (\A_\star \tensor_{H_\star} H_\star(X))_{p,q},
\]
where the first map is induced by the unit $\S \to H$.
All cellular spectra are $\A_\star$-good by \cite[Lemma 7.6]{DI:adams}.
\subsection*{The prime 2}
When $\ell = 2$ we have $H_\star = k_*[\tau]$ (cf.~\cite[2.1]{DI:adams}),
with $\tau \in H_{0,-1} = \mu_2(F^\times)$ represented by $-1$.
There is also a canonical element $\rho \in H_{-1,-1} = F^\times/(F^{\times})^2$ represented by $-1$.
The mod 2 motivic Steenrod algebra is generated as an $H^\star$-algebra by the motivic Steenrod squares $\Sq^i \in \A^{i,\lfloor i/2 \rfloor}$,
subject to the motivic Adem relations \cite[Theorem 4.5.1]{Riou}, \cite[Theorem 5.1]{HKO:steenrod}.
By \cite[12]{Voevodsky:power}, \cite[Theorem 5.6]{HKO:steenrod} the dual has the algebra structure
\begin{equation*}
\A_\star = H_\star[\tau_0, \tau_1, \dots, \xi_1, \xi_2, \dots]/(\tau_i^2 + (\tau + \rho\tau_0)\xi_{i+1} + \rho\tau_{i+1}).
\end{equation*}
Here $\xi_i$ is in bidegree $(2^{i+1}-2, 2^i - 1), i > 0$,
and $\tau_i$ is in bidegree $(2^{i+1}-1, 2^i - 1), i \geq 0$.
Let $H_\star^\C = H^{-(\star)}(\C;\Z/2) = \Z/2[\tau]$
and $H_\star^\R = H^{-(\star)}(\R;\Z/2) = \Z/2[\tau,\rho]$,
and similarly for the dual motivic Steenrod algebra, $\A_\star^\C$ and $\A_\star^\R$.
A consequence of the structure of $H_\star$ and $\A_\star$ is that \cite[Theorem 5.6]{HKO:steenrod}
\begin{align}
\label{eq:dual}
H_\star = H_\star^\R\tensor_{\Z/2[\rho]} k_*\qquad \text{and}\qquad
\A_\star = \A_\star^\R \tensor_{\Z/2[\rho]} k_*.
\end{align}
Here $\A_\star^\R$ is the motivic Steenrod algebra over $\R$,
and $k_*$ is considered as a $\Z/2[\rho]$-module by mapping $\rho$ to $\rho$.

\begin{remark} %
\label{rmk:Oliver}
The smallest integer $n$ (if it exists) such that $\rho^n = 0 \in k_n(F)$ is the smallest integer $n$ such that $-1$ is a sum of $2^n$ squares \cite[Corollary 3.5]{ElmanLam}, \cite[Theorem III.4.5]{MilnorHusemoller}.
For the field $\Q(x_0, \dots, x_{2^n})/(\sum_i x_i^2 = 0)$ we have $\rho^n = 0$ and $\rho^{n-1} \neq 0$ \cite[Satz 5]{Pfister}.
\end{remark}

\begin{remark}
\label{rmk:fin-fin-vcd}
Let $F$ be a field of odd characteristic with finite virtual cohomological dimension.
Then $F$ has finite $2$-cohomological dimension.
Indeed, if $F$ has finite virtual cohomological dimension then $2 I^n(F) = I^{n+1}(F)$ for $n > \vcd(F)$
\cite[Corollary 35.27]{EKM}, \cite[Korollar 1]{AP}.
But if $\F_q$ is the prime field of $F$
then $W(F)$ is a $W(\F_q)$-algebra (i.e., $\Z/2\oplus\Z/2$- or $\Z/4$-algebra), hence $I^{n+2} = 0$.
\end{remark}

\subsection*{Odd primes}
For $\ell$ an odd prime the structure of the motivic Steenrod algebra is simpler,
while the coefficients $H^\star$ are harder to describe.
\begin{lemma} %
\label{lem:Hp}
Let $F$ be a field and let $d$ be the degree of the extension
$F(\zeta_{\ell})/F$ where $\zeta_{\ell}$ is a primitive $\ell$'th root of
unity. Multiplication by the generator
$\zeta_\ell \in H^{0}_\et(F;\mu_{\ell}^{\tensor d})
\cong \mu_\ell(F^\sep)
\cong \Z/\ell$
induces an isomorphism
\[
\zeta_\ell : H^{p}_\et(F;\mu_{\ell}^{\tensor q})
\xrightarrow{\cong}
H^{p}_\et(F;\mu_{\ell}^{\tensor(q + d)}).
\]
\end{lemma}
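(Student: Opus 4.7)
The plan is to reduce the lemma to the trivial fact that tensoring with a $G_F$-invariant generator is an isomorphism of sheaves. The absolute Galois group $G_F$ acts on $\mu_\ell(F^\sep) \cong \Z/\ell$ through the cyclotomic character $\chi \colon G_F \to (\Z/\ell)^\times$. By the definition of $F(\zeta_\ell)$, the kernel of $\chi$ is $G_{F(\zeta_\ell)}$, so $\chi$ descends to a \emph{faithful} character of $\operatorname{Gal}(F(\zeta_\ell)/F)$; in particular the image of $\chi$ has order exactly $d = [F(\zeta_\ell):F]$.

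Since the action on $\mu_\ell^{\otimes d}$ is via $\chi^d$, and $\chi^d = 1$ by the previous paragraph, the étale sheaf $\mu_\ell^{\otimes d}$ on $\Spec F$ is trivial, i.e.\ isomorphic to the constant sheaf $\underline{\Z/\ell}$. A distinguished invariant generator is $\zeta_\ell^{\otimes d} \in (\mu_\ell^{\otimes d})(F^\sep)^{G_F}$, which under the identifications $H^0_\et(F;\mu_\ell^{\otimes d}) = (\mu_\ell^{\otimes d})(F^\sep)^{G_F} \cong \mu_\ell(F^\sep)$ is precisely the element called $\zeta_\ell$ in the statement.

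Tensoring with $\zeta_\ell^{\otimes d}$ then defines an isomorphism of étale sheaves $\mu_\ell^{\otimes q} \xrightarrow{\cong} \mu_\ell^{\otimes(q+d)}$, $x \mapsto x \otimes \zeta_\ell^{\otimes d}$, since under the trivialization above it becomes the identity $\mu_\ell^{\otimes q} \to \mu_\ell^{\otimes q}$. Applying $H^p_\et(F;-)$ yields the claimed map, because cup product with the degree-zero class $\zeta_\ell$ is by definition induced by the sheaf map ``tensor with the invariant global section $\zeta_\ell^{\otimes d}$.'' No substantial obstacle is anticipated: the whole argument amounts to the observation that $d$ is exactly large enough to kill the cyclotomic character, so $\mu_\ell^{\otimes d}$ admits a canonical $G_F$-invariant trivialization.
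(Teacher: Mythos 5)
Your proof is correct and is essentially the paper's argument, made more explicit: the paper reduces to continuous group cohomology of $G_F$, notes that $g$ acts on $\mu_\ell^{\otimes q}$ via the $q$-th power of the cyclotomic character, and declares the result "easy to check," while you fill in the key point that the image of $\chi$ has order exactly $d$ so $\chi^d = 1$, hence $\mu_\ell^{\otimes d}$ is the trivial module with invariant generator $\zeta_\ell^{\otimes d}$.
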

\begin{proof}
Let $G$ be the Galois group of the extension $F^{\sep}/F$.
Then
\[
H^{p}_\et(F;\mu_{\ell}^{\tensor q})
=
\Ext_{\Z[G]}^p(\Z, \mu_{\ell}^{\tensor q}(F^{\sep})),
\]
and $\mu_{\ell}^{\tensor q}(F^{\sep}) \cong \Z/\ell$ as an abelian group,
with the action induced by the diagonal action of the Galois group on each tensor factor.
That is $g \in \Z[G]$ acts on $\mu_{\ell}^{\tensor q}(F^{\sep}) \cong \mu_{\ell}(F^\sep) \cong \Z/\ell$ as $g^q$ (see for instance \cite[p.~46]{Milne}). %
The above statements are then easy to check.
\end{proof}
As a corollary of \Cref{lem:Hp} we see that $H^\star$ is generated by elements in $H^{p,q}$ such that
$p \leq q$ and $q - \ell < p$.
Let $\wt{H}^\star$ be the graded $\Z/\ell$-submodule of such elements.
That is, any element in $H^\star$ is a multiple of $\zeta_\ell$ and a unique element of $\wt{H}^\star$.
As an $H^\star$-algebra the mod $\ell$ motivic Steenrod algebra is generated by the Bockstein $\beta$ in bidegree $(1,0)$ and the $i$'th power operation $P^i$ in bidegree $(2i(\ell -1),i(\ell -1))$.
Hence the action of $P^i$ on $\wt{H}^\star$ is trivial for degree reasons.
Thus the right unit $H_\star \to \A_\star$ is completely determined by the action of the Bockstein and the action of $P^i$ on $\zeta^l$.
By the Adem relations we have
$P^{i}(\zeta^k) = \zeta P^{i}(\zeta^{k-1}),$
hence
$P^i(\zeta^k) = 0, i > 0$.
Consequently the Bockstein is the only element of $\A^\star$ which can act nontrivially on $H^\star$ (this should be contrasted with the mod 2 case, where many elements of $\A^\star$ act nontrivially on powers of $\tau$).

The algebra structure of the dual mod $\ell$ motivic Steenrod algebra is \cite[Theorem 12.6]{Voevodsky:power}, \cite[Theorem 5.6]{HKO:steenrod}
\[
A_\star \cong H_\star[\tau_0, \tau_1, \dots, \xi_1, \xi_2, \dots]/(\tau_0^2, \tau_1^2, \dots).
\]
Here $\xi_i$ is in bidegree $(2\ell^i-2, \ell^i-1), i > 0$,
and $\tau_i$ is in bidegree $(2\ell^i-1, \ell^i-1), i \geq 0$.
We can write this as $\A_\star = H_\star \tensor_{\Z/\ell} \A_\star^{\Top}$,
where $\A_\star^{\Top}$ is the topological dual Steenrod algebra (described by Milnor \cite{Milnor:Steenrod}), but bigraded such that $\xi_i$ has the bidegree above.
The coproduct is then the same as in topology, but the left and right units can be highly nontrivial.
That is, for any $x \in H_{p,q}$ we have $(\eta_L - \eta_R)(x) = y\tau_0$ for $\beta(x) = y \in H_{p-1,q}$.

\subsection*{The motivic Adams spectral sequence}
The mod $\ell$ motivic Adams spectral sequence is constructed in the same way as in topology using motivic Adams resolutions.
For an $\A_\star$-good spectrum $X$ the $E_2$-page is
\[
E_2^{s,t,w} = \Ext^{s,(t,w)}_{\A_\star}(H_\star, H_\star(X))
\]
with $d_r$-differential $d_r: E_r^{s,t,w} \to E_r^{s+r,t+r-1,w}$ \cite[Proposition 7.10]{DI:adams}.
Here $s$ is the filtration, $t$ is the topological degree and $w$ is the motivic weight.
For formal reasons the spectral sequence is conditionally convergent \cite[Definition 5.10]{Boardman} to the $H$-completion
$
\pi_{t-s,w}(X_{H}^\wedge).
$
For $X$ a motivic spectrum of bounded cellular type over fields of finite virtual cohomological dimension at the prime $2$,
and finite cohomological dimension at odd primes, Hu, Kriz and Ormsby proved that 
$X_{H}^\wedge \simeq X_{\ell, \eta}^{\wedge}$ \cite{HKO:adams}.
More recently Mantovani gave a different proof and showed that the assumptions on cellularity and the (virtual)
cohomological dimension can be dropped \cite{Mantovani}; that is %
there is a weak equivalence $X_{H}^{\wedge} \simeq X_{\ell,\eta}^{\wedge}$
for $X$ a connective motivic spectrum over a perfect field \cite[Theorem 1.0.1]{Mantovani}.

There is also a cohomological motivic Adams spectral sequence with $E_2$-page
\[
E_2^{s,t,w} = \Ext^{s,(t,w)}_{\A^\star}(H^\star(X), H^\star).
\]
For $\A_\star$-good spectra for which $H_\star(X) \to \Hom(H^\star(X), H^\star)$ is an isomorphism, and $H_\star(X)$ is of motivic finite type and free over $H_\star$, we have a dualization isomorphism
\cite[Lemma 7.13]{DI:adams}
\[
\Ext^{s,(t,w)}_{\A_\star}(H_\star, H_\star(X))
\cong
\Ext^{s,(t,w)}_{\A^\star}(H^\star(X), H^\star).
\]
In particular this is true for $\S$, $H$ and $\HZ$.
In \Cref{lem:dualization} we prove a dualization isomorphism for some spectra which are not free over $H_\star$.

\subsection*{The $\Ext$-group and the cobar complex}
The $\Ext$-group which appears on the $E_2$-page of the motivic Adams
spectral sequence can be computed with the cobar complex
$C^\bullet(H_\star, H_\star(X))$, which takes the following form
(see Ravenel \cite[Proposition 3.1.2]{Ravenel})
\begin{align}
\label{eq:cobar}
&H_\star \tensor_{H_\star} H_\star(X)
\to
H_\star \tensor_{H_\star} \oA_\star \tensor_{H_\star} H_\star(X)
\to \\
&H_\star \tensor_{H_\star} \oA_\star^{\tensor 2} \tensor_{H_\star} H_\star(X)
\to
\dots
\to
H_\star \tensor_{H_\star} \oA_\star^{\tensor i} \tensor_{H_\star} H_\star(X)
\to \dots. \nonumber
\end{align}
Here $\oA_\star = \ker(\epsilon)$ is the augmentation ideal.
Elements of the cobar complex are commonly written as sums of elements 
$
l[\gamma_1\vert \dots \vert \gamma_s]m
$
where $l \in H_\star, m \in H_\star(X)$ and $\gamma_i \in \oA_\star$.
By $H_\star$-linearity we may assume each $\gamma_i$ is a monomial in $\xi_i$'s and $\tau_i$'s.
If $X = \S$ we may further assume $m = 1$,
in which case it is frequently omitted from the notation.
The differentials are as follows. 
\begin{align}
\label{eq:cobar-diff}
d_s(l[\gamma_1\vert \dots \vert \gamma_s]m) =&
1[\eta_R(l)\vert\gamma_1\vert \dots \vert \gamma_s]m
+\sum_{i=1}^s (-1)^i l[\gamma_1\vert \dots \vert \gamma_{i-1} \vert \Delta(\gamma_i) \vert \gamma_{i+1}\vert \dots \vert \gamma_s]m \\
& + (-1)^{s+1}l[\gamma_1\vert \dots \vert \gamma_s]\psi_X(m)
\nonumber
\end{align}
Note that when $X = \S$ the coaction is given by $\psi_X = \eta_L \tensor 1$.
We say that an element of $C^s(H_\star, H_\star)^{(t,w)}$ or $\Ext^{s,(t,w)}_{\A_\star}(H_\star, H_\star)$ is in Milnor-Witt degree $m = t - s - w$ \cite[p.~2]{DI:Real}.
The cobar differential preserves the internal grading $(t, w)$ and increments $s$ by $1$.
Hence, the cobar differential decreases the Milnor-Witt degree by 1.
Note that the cobar complex is a noncommutative differential graded algebra when $\rho \neq 0$.
For instance mod $2$ we have $\tau\cdot[\tau_0] + [\tau_0]\cdot\tau = \rho[\tau_0]^2$ and $d(\tau^2) \neq 0$.
For odd primes let $C^\bullet_\Top$ be the topological cobar complex \cite[A1.2.11]{Ravenel},
considered as a trigraded object, for example, $[\xi_i]$ has tridegree $(1, (2\ell^i - 2, \ell^i - 2))$.
Denote the homology of $C^{\bullet}_{\Top}$ by $\Ext^{*,(\star)}_{\Top}$.

For the topological $\Ext$-groups Adams proved \cite[Theorem 1]{Adams:finite}
\begin{equation}
\label{eq:adams-iso}
\Ext^{s,t}_{\Top}(\Z/\ell, H_*(\S;\Z/\ell))
\to 
\Ext^{s,t}_{\Top}(\Z/\ell, H_*(\HZ;\Z/\ell))
\end{equation}
is an isomorphism for $t - s < (2\ell - 3)s$.
For $t-s > 0$, the target is zero \cite[Corollary 2]{Adams:finite}.
Over the real numbers Guillou and Isaksen proved an analogue mod 2:
\begin{lemma}[\protect{\cite[Lemma 5.1]{GI:Real}}]
\label{lem:GI}
For all $m > 0$,
$t - s - w = m > 0$, $t - s \neq 0$, $s > \frac{1}{2}(m + 3)$ and $2s > t + 1$
we have the vanishing
\[
\Ext^{s,(t,w)}_{\A^\R_\star}(H_\star^\R, H_\star^\R) = 0.
\]
\end{lemma}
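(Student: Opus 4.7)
\medskip
\noindent\textbf{Proof plan.} I would run the $\rho$-Bockstein spectral sequence
\[
E_1 = \Ext_{\A_\star^\C}(H_\star^\C, H_\star^\C)\otimes_{\F_2}\F_2[\rho]
\;\Longrightarrow\; \Ext_{\A_\star^\R}(H_\star^\R, H_\star^\R),
\]
obtained by iterating the short exact sequence $0\to H_\star^\R\xrightarrow{\rho} H_\star^\R \to H_\star^\C \to 0$ of $\A_\star^\R$-comodules (equivalently, filtering the real cobar complex by powers of $\rho$, using the decomposition \eqref{eq:dual}). Since $\rho$ has tridegree $(s,t,w) = (0,-1,-1)$, multiplication by $\rho$ preserves $(s,t)$ and raises the Milnor-Witt degree $m = t-s-w$ by one. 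An $E_1$-class in tridegree $(s,(t,w))$ is then a finite sum $\sum_{k\ge 0}\rho^k x_k$ with $x_k\in \Ext_{\A_\star^\C}^{s,(t,w+k)}(H_\star^\C,H_\star^\C)$. Every $E_r$-page is a subquotient of $E_1$, so it is enough to prove each $x_k$ vanishes.

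To kill the $x_k$ I would appeal to an inner $\tau$-Bockstein spectral sequence over $\C$, whose $E_1$-page is $\Ext_{\mathcal{A}}^{s,t}(\F_2,\F_2)\otimes_{\F_2}\F_2[\tau]$, reducing the problem to the classical Adams vanishing theorem. Under the hypotheses $t-s\ne 0$ and $2s > t+1$, either $t-s < 0$ and classical Ext vanishes for dimensional reasons, or $0 < t-s \le s-2$, which places $(s,t)$ inside Adams' vanishing wedge. Either way $\Ext_{\mathcal{A}}^{s,t}(\F_2,\F_2) = 0$, hence $\Ext_{\A_\star^\C}^{s,(t,w')}(H_\star^\C,H_\star^\C) = 0$ for every weight $w'$, and in particular for all $w+k$ with $k\ge 0$.

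The auxiliary condition $s > \tfrac12(m+3)$, equivalently $w > t-3s+3$, plays a bookkeeping role: together with the intrinsic upper bound on the weights in which $\C$-motivic Ext at fixed $(s,t)$ can be supported (coming from the motivic bidegrees of the generators $\xi_i$, $\tau_j$ of $\A_\star^\C$), it ensures that no $\rho$-multiple of a $\C$-motivic class can enter the given tridegree from outside the Adams vanishing region. Combining the two Bocksteins, $E_1^{s,(t,w)} = 0$ in the claimed tridegree, so $E_\infty^{s,(t,w)} = 0$ and conditional convergence gives $\Ext_{\A_\star^\R}^{s,(t,w)}(H_\star^\R,H_\star^\R) = 0$.

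The main obstacle is the bookkeeping: aligning the three inequalities of the hypothesis with the precise shape of Adams' vanishing wedge (whose edge depends on $s \bmod 4$) and checking that the weight shifts induced by $\rho$-multiplication never escape the nonvanishing region. Once those inequalities are matched, the two nested Bockstein spectral sequences yield the conclusion formally.
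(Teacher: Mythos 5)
The broad strategy is right: the paper's own proof (which cites Adams' periodicity theorem and then defers to Guillou--Isaksen's proof of \cite[Lemma~5.1]{GI:Real}) indeed goes through the $\rho$-Bockstein spectral sequence with $E_1 = \Ext_{\A_\star^\C}\otimes\F_2[\rho]$. But your filling-in of that proof has three gaps that, taken together, mean the argument does not close.

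\textbf{1. Degree of $\rho$.} You correctly record $\rho$ as having tridegree $(0,-1,-1)$, but then assert that $\rho$-multiplication ``preserves $(s,t)$ and raises $m$ by one.'' These two statements are inconsistent. Since $\rho$ lowers both $t$ and $w$ by $1$, multiplication by $\rho$ \emph{preserves} the Milnor--Witt degree $m = t-s-w$. Consequently a class in $(s,(t,w))$ on the $E_1$-page of the $\rho$-Bockstein is $\sum_k\rho^k x_k$ with $x_k\in\Ext_{\A_\star^\C}^{s,(t+k,w+k)}$, not $\Ext_{\A_\star^\C}^{s,(t,w+k)}$. In particular the topological degree of $x_k$ drifts upward with $k$.

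\textbf{2. Unbounded stems.} Once the degrees are corrected, the $x_k$ have topological stem $(t+k)-s$, which is unbounded as $k\to\infty$ while $s$ is fixed. Adams' vanishing theorem kills $\Ext_{\mathcal A}^{s,t'}(\F_2,\F_2)$ only for $t'-s$ bounded above by a linear function of $s$, so for $k$ large it says nothing. The condition $2s>t+1$ constrains $t-s$, not $t+k-s$, and there is no bound on $k$. The ``bookkeeping'' you describe (the weight bound $w'\leq t'-s$) only enforces $m\geq 0$, which is automatic and does not bound $k$.

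\textbf{3. The $\tau$-Bockstein.} The spectral sequence you invoke, with ``$E_1=\Ext_{\mathcal A}^{s,t}(\F_2,\F_2)\otimes\F_2[\tau]$,'' does not exist. Over $\C$ the cobar differential is $\tau$-linear, so the $\tau$-filtration of the cobar complex gives a spectral sequence whose $E_1$-page is $\Ext_{\A_\star^\C/\tau}\otimes\F_2[\tau]$, where $\A_\star^\C/\tau\cong\F_2[\xi_i]\otimes E[\tau_j]$; that is a much larger and quite different object from $\Ext_{\mathcal A}$. The relation to classical Ext is by $\tau$-\emph{inversion}: $\tau^{-1}\Ext_{\A_\star^\C}\cong\Ext_{\mathcal A}\otimes\F_2[\tau^{\pm1}]$. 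So classical vanishing of $\Ext_{\mathcal A}^{s,t}$ only tells you that $\Ext_{\A_\star^\C}^{s,(t,\ast)}$ is $\tau$-power torsion, not that it is zero. $\C$-motivic Ext genuinely has $\tau$-torsion (e.g.\ the $h_1$-tower in Milnor--Witt degree $0$), and your argument does not rule out $\tau$-torsion in the region under consideration.

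What is actually needed --- and what Guillou and Isaksen's Lemma 5.1 provides --- is a vanishing statement organized by the Milnor--Witt degree $m$ (which is constant along $\rho$-towers) and the filtration $s$, matching the hypotheses $s>\tfrac12(m+3)$ and $2s>t+1$. Because you are arguing instead via the topological degree $t+k$, the inequalities in the hypothesis do not apply uniformly in $k$, and the $\tau$-torsion issue is left untreated. The overall plan (reduce from $\R$ to $\C$ by the $\rho$-Bockstein, then reduce to Adams vanishing) is the right one, but these gaps must be filled to turn it into a proof.
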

\begin{proof}
By %
\cite[Theorem 1.1]{Adams:period}
$s > \frac{1}{2}(m+3)$ implies vanishing of the topological $\Ext^{m+s,s}_{\Top}(\Z/2, \Z/2)$.
Inspection of the proof of \cite[Lemma 5.1]{GI:Real} proves the statement.
\end{proof}

We will use these vanishing results below in \Cref{sec:detour} and \Cref{sec:odd} to obtain vanishing regions in $\Ext$-groups over general fields.

\section{A detour to homotopy modules}
\label{sec:detour}
In this section we compute the mod $2$ motivic cohomology of the generalized motivic cohomology spectra of Bachmann \cite{Bachmann:veff}.
This allows us to formulate the vanishing results in \Cref{sec:2} as an isomorphism in high filtration between the $\Ext$-groups of $\S$ and $\wHZ$.
The $\Ext$-group of $\wHZ$ can be computed explicitly, see \Cref{cor:Ext-wHZ}.
First we establish some cofiber sequences relating the various spectra and associated homotopy modules. The long exact sequences obtained from the cofiber sequences degenerate to short exact sequences, and it is relatively easy to get complete descriptions of the mod $2$ motivic cohomology of the spectra.
We calculate the cohomological $\Ext$-groups of the spectra
and part of the $\Ext_{\A^\star}(H^\star, H^\star)$-module structure.
Finally we prove a dualization theorem which relates the cohomological $\Ext$-groups to the homological $\Ext$-groups. This is a little involved since some of the spectra have motivic cohomology groups which are $H^\star/\tau$-modules. In particular they are not free $H^\star$-modules, and we have to dualize using $\Ext^1_{H^\star}(-, H^\star)$, since $\Hom_{H^\star}(-, H^\star) = 0$ on these spectra.

Recall from \cite{Bachmann:veff}
that for a perfect field $F$ there are $t$-structures on $\SH(F)$ and $\SH(F)^{\eff}$ defined as
\begin{align*}
\SH(F)_{\geq 0} &= \{ X \in \SH(F) \vsep \ul{\pi}_{i}(X)_* = 0, i < 0 \} \\
\SH(F)_{\leq 0} &= \{ X \in \SH(F) \vsep \ul{\pi}_{i}(X)_* = 0, i > 0 \} \\
\SH(F)_{\geq 0}^{\eff} &= \{ X \in \SH(F)^\eff \vsep \ul{\pi}_{i}(X)_0 = 0, i < 0 \} \\
\SH(F)_{\leq 0}^{\eff} &= \{ X \in \SH(F)^\eff \vsep \ul{\pi}_{i}(X)_0 = 0, i > 0 \}.
\end{align*}
Note that the complex realization of $\SH(\C)_{\geq d}$ is $\SH$,
while the complex realization of $\SH(\C)_{\geq d}^\eff$ is $\SH_{\geq 2d}$.
The hearts of these $t$-structures are identified with homotopy modules and effective homotopy modules, respectively.
Of particular interest are the effective homotopy modules %
\[
\ul{K}_{*}^{MW},\ \ul{K}_{*}^M = \ul{K}_{*}^{MW}/\eta,\ \ul{K}_{*}^{W} = \ul{K}_{*}^{MW}/h,\ \ul{K}_{*}^M/2.
\]
These are the Milnor-Witt K-theory, Milnor $K$-theory, Witt $K$-theory and mod 2 Milnor $K$-theory sheaves, respectively, cf.~\cite{Morel:A1}, %
\cite{Morel:witt}.
The elements $\eta$ and $h$ correspond to the Hopf map and the hyperbolic plane in $\ul{K}^{MW}_{-1} = \W$ and $\ul{K}^{MW}_0 \cong \GW$, respectively.
By \cite[Lemma 6]{Bachmann:veff} it does not matter where the cokernels/cones of these elements are formed.
Note that both $h$ and $\eta$ induce the zero map on mod 2 motivic cohomology.
By taking effective covers of the above homotopy modules we get generalized motivic cohomology theories as observed by Bachmann \cite[p.~15]{Bachmann:veff}:
\[
\wHZ := \f_0(\ul{K}_{*}^{MW}),\ \HZ = \f_0(\ul{K}_{*}^M),\ \HW := \f_0(\ul{K}_{*}^{W}),\ H =\f_0(\ul{K}_{*}^M/2).
\]
Here $\HZ$ represents motivic cohomology, while $\wHZ$ represents Milnor-Witt motivic cohomology \cite{Bachmann-Fasel}.
Bachmann computed the coefficients of $\wHZ$ and $\HW$ to be \cite[Theorem 17]{Bachmann:veff}
\[
\pi_{t,w}(\wHZ) = \begin{cases}
K^{MW}_{-t}(F) & t - w = 0 \\
H^{-t,-w}(F;\Z) & t - w \neq 0,
\end{cases}
\qquad
\pi_{t,w}(\HW) = \begin{cases}
W(F) & t - w = 0, t \geq 0 \\
I(F)^{-t} & t - w = 0, t < 0 \\
H^{-t,-w}(F;\Z/2) & t - w \neq 0.
\end{cases}
\]

In \cite{Bachmann:veff} the base field is assumed to be perfect for the $t$-structure on $\SH(F)$ to be well behaved. Hence, most of the computations in this section require the base field to be perfect.
However, for the convergence results we only need to make a comparison between the $\Ext$-group of the motivic sphere spectrum and $\wHZ$,
and this is an entirely algebraic statement.
Hence perfectness of the base field is not needed for the convergence results in \Cref{sec:strong}, unless explicitly required.
\begin{lemma}
\label{lem:cofibs}
Over a perfect field of characteristic not 2, there are cofiber sequences of the following form.
\begin{align}
&\HZ \xrightarrow{h} \wHZ \to \HW \label{eq:C1} \\
&\HZ \xrightarrow{2} \HZ \to H \label{eq:C2} \\
&\Sigma^{1,1}\ul{K}_{*}^{W} \xrightarrow{\eta} \ul{K}_{*}^{W} \to \ul{K}_{*}^M/2 \label{eq:C3} \\
&\HW_{\geq 1} \to \HW \to \ul{K}_{*}^{W} \label{eq:C4} \\
&H_{\geq 1} \to H \to \ul{K}_{*}^{M}/2 \label{eq:C5}
\end{align}
There are also the equivalences below.
\begin{align}
\HW_{\geq 1} &\xrightarrow{\simeq} H_{\geq 1} \label{eq:eq1} \\
\Sigma^{0,-1}H &\xrightarrow{\simeq} H_{\geq 1} \label{eq:eq2}
\end{align}
\end{lemma}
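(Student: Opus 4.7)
The plan is to deduce all seven statements from short exact sequences in the abelian category of homotopy modules together with the defining truncation triangles of the effective $t$-structure on $\SH(F)$. The key mechanism is that the effective cover functor $\f_0$, being the right adjoint to the inclusion $\SH(F)^\eff \hookrightarrow \SH(F)$, preserves fiber sequences and hence cofiber sequences between these stable $\infty$-categories.

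I would first establish the short exact sequences of homotopy modules
\begin{align*}
0 &\to \ul{K}_{*}^M \xrightarrow{h} \ul{K}_{*}^{MW} \to \ul{K}_{*}^W \to 0, \\
0 &\to \ul{K}_{*}^M \xrightarrow{2} \ul{K}_{*}^M \to \ul{K}_{*}^M/2 \to 0, \\
0 &\to \Sigma^{1,1}\ul{K}_{*}^W \xrightarrow{\eta} \ul{K}_{*}^W \to \ul{K}_{*}^M/2 \to 0.
\end{align*}
The first uses the relation $h\eta = 0$ in $\GW(F)$, so that multiplication by $h$ on $\ul{K}_{*}^{MW}$ factors through the quotient $\ul{K}_{*}^{MW}/\eta = \ul{K}_{*}^M$, together with Morel's presentation $\ul{K}_{*}^W = \ul{K}_{*}^{MW}/h$. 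The third combines the identification $\ul{K}_{*}^W/\eta = \ul{K}_{*}^{MW}/(\eta, h) = \ul{K}_{*}^M/2$ (using $h \equiv 2 \bmod \eta$) with the injectivity of multiplication by $\eta$ on $\ul{K}_{*}^W$, which is visible from Morel's identification of this homotopy module with powers of the fundamental ideal. The cofiber sequence (6) is the resulting fiber sequence of Eilenberg-MacLane spectra, while (4) and (5) arise by applying $\f_0$ to the fiber sequences of the first two short exact sequences and invoking $\f_0 \ul{K}_{*}^{MW} = \wHZ$, $\f_0 \ul{K}_{*}^M = \HZ$, $\f_0 \ul{K}_{*}^W = \HW$, and $\f_0 \ul{K}_{*}^M/2 = H$.

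The triangles (7) and (8) are the canonical truncation triangles $X_{\geq 1} \to X \to \pi_0^\eff X$ of the effective $t$-structure applied to $X = \HW$ and $X = H$; both live in $\SH(F)^\eff_{\geq 0}$ with $\pi_0^\eff$ equal to the homotopy module of which the effective cover was taken. For (9), I would assemble (7) and (8) into a commutative diagram whose right-hand column is the quotient $\ul{K}_{*}^W \to \ul{K}_{*}^M/2$ from the third short exact sequence above and whose middle column $\HW \to H$ is obtained by applying $\f_0$ to this quotient; naturality of the truncation triangles induces the comparison $\HW_{\geq 1} \to H_{\geq 1}$, and I would verify it is an equivalence via a diagram chase using (6). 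For (10), multiplication by $\tau \in H^{0,1}(F;\Z/2)$ provides a map $\Sigma^{0,-1}H \xrightarrow{\tau} H$ whose cofiber is identified with $\ul{K}_{*}^M/2$ by the standard structure of motivic cohomology; comparing with (8) then yields the equivalence $\Sigma^{0,-1}H \simeq H_{\geq 1}$.

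The main obstacle will be the bookkeeping surrounding effectivity and weight shifts, especially since $\f_0 \Sigma^{1,1}(-)$ does not a priori agree with $\Sigma^{1,1}\f_0(-)$; the injectivity of $\eta$ on $\ul{K}_{*}^W$, though elementary, is the linchpin that upgrades the cofiber of $\eta$ to an honest short exact sequence in the heart rather than a two-term extension. A secondary technicality is checking that $\HW \to H$ really does realize the quotient $\ul{K}_{*}^W \to \ul{K}_{*}^M/2$ on $\pi_0^\eff$, which must be tracked through Bachmann's construction of the effective covers.
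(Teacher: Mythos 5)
Your treatment of (C3), (C4), (C5), (eq1), and (eq2) is essentially the paper's approach — in particular the key observation that $\eta$ is injective on $\ul{K}_*^W \cong I^*$ is exactly what the paper uses for (C3), and (eq2) is proved by the same comparison with $H/\tau$.

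However, there is a genuine gap in your derivation of (C1) and (C2): the first two short exact sequences of homotopy modules you propose are not exact. For (C1), you assert that the factorization $\bar h \colon \ul{K}_*^M \to \ul{K}_*^{MW}$ of $\cdot h$ through $\ul{K}_*^{MW}/\eta$ is injective. It is not. Using the Milnor--Witt relation $[ab] = [a] + [b] + \eta[a][b]$ with $a = b = -1$ and $[1] = 0$, one gets $2\rho = -\eta\rho^2$; combined with $h = 2 + \eta\rho$ this yields $h\rho = 2\rho + \eta\rho^2 = 0$ in $K^{MW}_1(F)$, while $\rho$ has nonzero image $[-1] \in K^M_1(F) = F^\times$ for any field of characteristic $\neq 2$. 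So $\bar h([-1]) = 0$. The correct short exact sequence (from Morel's pullback square $K^{MW}_n \cong K^M_n \times_{k_n} I^n$) is $0 \to 2\ul{K}_*^M \to \ul{K}_*^{MW} \to \ul{K}_*^W \to 0$, and $2\ul{K}_*^M$ is not $\ul{K}_*^M$. Similarly for (C2), multiplication by $2$ on $\ul{K}_*^M$ is not injective (again $[-1]$ is $2$-torsion in $K^M_1$), so there is no short exact sequence $0 \to \ul{K}_*^M \xrightarrow{2} \ul{K}_*^M \to \ul{K}_*^M/2 \to 0$ in the heart. Consequently, applying $\f_0$ to the associated fiber sequences of Eilenberg--Mac Lane spectra would not produce the cofiber sequences (C1) and (C2).

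The paper sidesteps this entirely. For (C2), $H = \HZ/2$ is \emph{defined} as the cofiber of $2$ on the spectrum $\HZ$; cofibers of endomorphisms always exist in a stable category, independently of any injectivity on homotopy modules, so nothing needs to be checked. For (C1), the paper simply cites Bachmann's Lemma 19; the map $h\colon \HZ \to \wHZ$ there is a spectrum-level hyperbolic map whose cofiber is identified as $\HW$ by a nontrivial argument, not by passing a heart-level short exact sequence through $\f_0$. So your strategy needs to be replaced for (C1) and (C2), while the rest of the argument is sound.
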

\begin{proof}
The cofiber sequence \eqref{eq:C1} is part of the proof of \cite[Lemma 19]{Bachmann:veff}.
The cofiber sequences \eqref{eq:C2}, \eqref{eq:C4} and \eqref{eq:C5} are by definition.
The cofiber sequence \eqref{eq:C3} is the short exact sequence in $\SH(k)^{\heart}$
\[
0 \to \Sigma^{1,1}\ul{K}_{*}^{W} \xrightarrow{\eta} \ul{K}_{*}^{W} \to \ul{K}_{*}^M/2 \to 0.
\]
This sequence is exact since $K^W_*(F) \cong I^*(F)$ for a field $F$ \cite[Theoreme 2.1]{Morel:witt} (see also \cite[(2.2), p.~66]{Morel:A1}) and since multiplication by $\eta$ corresponds to the inclusion of $I^{n+1}(F)$ into $I^n(F)$ \cite[p.~692]{Morel:witt}.
The equivalence \eqref{eq:eq1} is \cite[Theorem 17]{Bachmann:veff}.
The equivalence \eqref{eq:eq2} follows from the equivalence of cofiber sequences
\[
\begin{tikzcd}
\Sigma^{0,-1}H \ar[r, "\tau"]\ar[d] & H \ar[r]\ar[d] & H/\tau\ar[d] \\
H_{\geq 1} \ar[r] & H \ar[r] & \ul{K}_{*}^M/2.
\end{tikzcd}
\]
Here $H/\tau \to \ul{K}_{*}^M/2$ is an equivalence since when evaluated on a field the source and target are both mod 2 Milnor $K$-theory which is mapped by the identity, cf.~\cite[2.2]{Hoyois:fromto}.
\end{proof}

Next we compute mod 2 motivic cohomology of $\wHZ, \HW, \ul{K}_{*}^W, \ul{K}_{*}^M/2$,
the $\Ext$-groups of the motivic cohomology, %
and their relation to $\Ext_{\A_\star}(H_\star, H_\star)$.
Throughout we use the following convention for suspensions and bigraded hom-groups:
For left $\A^\star$-modules we set $(\Sigma^{t,w}M^\star)^{p,q} = M^{p-t,q-w}$ and
$\Hom^{(t,w)}_{\A^\star}(M^\star, N^\star) = \Hom_{\A^\star}(M^\star, \Sigma^{t,w}N^\star)$.
For left $\A_\star$-comodules we set $(\Sigma^{t,w}M_\star)_{p,q} = M_{p-t,q-w}$ and
$\Hom^{(t,w)}_{\A_\star}(M_\star, N_\star) = \Hom_{\A_\star}(M_\star, \Sigma^{-t,-w}N_\star)$.
With this convention
$H^\star(\Sigma^{t,w}X) = \Sigma^{t,w}H^\star(X)$,
$H_\star(\Sigma^{t,w}X) = \Sigma^{t,w}H_\star(X)$, $\Hom_{\A^\star}^{(t,w)}(\A^\star, H^\star) = H_{t,w} = \Hom_{\A_\star}^{(t,w)}(\A_\star, H_\star)$,
and suspensions on the interesting variable of the $\Ext$-groups can be moved inside and outside without changing the signs of the suspensions.
The $\Hom$-groups are considered as homological objects although they are indexed with superscripts.

\begin{lemma}
\label{lem:ell-KW}
Over a perfect field of characteristic not 2 and for an odd prime $\ell$, the motivic cohomology groups $H^\star(\ul{K}_{*}^{W};\Z/\ell)$ and  $H^\star(\HW;\Z/\ell)$ are trivial.
\end{lemma}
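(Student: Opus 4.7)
The plan is to combine the cofiber sequences of \Cref{lem:cofibs} with two elementary vanishing principles for mod $\ell$ motivic cohomology at odd $\ell$.

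First I would record two principles. \emph{Principle (i):} any motivic spectrum $X$ on which multiplication by $2$ is nullhomotopic has $H^\star(X;\Z/\ell) = 0$, because $[X, Y]$ is $2$-torsion (precompose with $2\cdot 1_X = 0$) and for $Y = \Sigma^\star \HZ/\ell$ it is simultaneously $\ell$-torsion, so it vanishes when $\gcd(2,\ell) = 1$. \emph{Principle (ii):} the Hopf map $\eta \in \pi_{1,1}(\S)$ acts trivially on $H^\star(-;\Z/\ell)$, since its action factors through the image of $\eta$ in the coefficient group $\pi_{1,1}(\HZ/\ell) = H^{-1,-1}(F;\Z/\ell)$, which vanishes because mod $\ell$ motivic cohomology of a field is concentrated in nonnegative weights.

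Next, I would apply (i) to two inputs. The homotopy module $\ul{K}_{*}^{M}/2$ takes values in $\Z/2$-modules, so $H^\star(\ul{K}_{*}^{M}/2;\Z/\ell) = 0$. By the equivalences \eqref{eq:eq1} and \eqref{eq:eq2}, $\HW_{\geq 1} \simeq \Sigma^{0,-1} H$ is a shift of the mod $2$ Eilenberg--MacLane spectrum $H = \HZ/2$, so principle (i) also yields $H^\star(\HW_{\geq 1};\Z/\ell) = 0$.

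To handle $\ul{K}_{*}^{W}$, I would apply $[-,\Sigma^\star \HZ/\ell]$ to the cofiber sequence \eqref{eq:C3}, $\Sigma^{1,1}\ul{K}_{*}^{W} \xrightarrow{\eta} \ul{K}_{*}^{W} \to \ul{K}_{*}^{M}/2$. By (ii) the $\eta$-induced connecting maps vanish, so the long exact sequence decomposes into short exact sequences trapping $H^\star(\ul{K}_{*}^{W};\Z/\ell)$ between copies of $H^\star(\ul{K}_{*}^{M}/2;\Z/\ell) = 0$; hence $H^\star(\ul{K}_{*}^{W};\Z/\ell) = 0$. Finally, the cofiber sequence \eqref{eq:C4}, $\HW_{\geq 1} \to \HW \to \ul{K}_{*}^{W}$, has both outer terms with trivial mod $\ell$ cohomology, forcing $H^\star(\HW;\Z/\ell) = 0$. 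The only ingredient with any subtlety is principle (ii), which itself reduces to the standard vanishing $H^{p,q}(F;\Z/\ell) = 0$ for $q < 0$; everything else is routine bookkeeping with long exact sequences.
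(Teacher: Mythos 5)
Your proof is correct and rests on the same essential facts as the paper's: $\ell$ and $2$ are coprime (which makes $\ul{K}_{*}^{M}/2$ have trivial mod $\ell$ cohomology), the $\eta$-cofiber sequence \eqref{eq:C3}, and the vanishing of $\eta$ in $\pi_{1,1}(\HZ/\ell)$. The only cosmetic difference is organizational: the paper first forms the $3\times 3$ diagram of cofiber sequences obtained by applying $\cdot\ell$ to \eqref{eq:C3} to observe $\ul{K}_{*}^{M}/(2,\ell)=0$ and hence that $\eta$ is an equivalence on $\ul{K}_{*}^{W}/\ell$, then combines this with $H^\star(\eta;\Z/\ell)=0$; you reach the same conclusion by observing $H^\star(\ul{K}_{*}^{M}/2;\Z/\ell)=0$ directly from your Principle~(i) and running the long exact sequence of \eqref{eq:C3} with your Principle~(ii). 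Both versions then handle $\HW$ identically via \eqref{eq:C4} and \eqref{eq:eq1}.
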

\begin{proof}
The first claim follows from the commutative diagram below with cofiber sequences as columns and rows.
\[
\begin{tikzcd}
\Sigma^{1,1}\ul{K}_{*}^W \ar[r, "\eta"]\ar[d, "\ell"] & \ul{K}_{*}^W \ar[r]\ar[d, "\ell"] & \ul{K}_{*}^M/2\ar[d, "\ell"] \\
\Sigma^{1,1}\ul{K}_{*}^W \ar[r, "\eta"]\ar[d] & \ul{K}_{*}^W \ar[r]\ar[d] & \ul{K}_{*}^M/2 \ar[d] \\
\Sigma^{1,1}\ul{K}_{*}^W/\ell \ar[r, "\eta"] & \ul{K}_{*}^W/\ell \ar[r] & \ul{K}_{*}^M/(2,\ell) = 0
\end{tikzcd}
\]
It follows that $\eta: \Sigma^{1,1}\ul{K}_{*}^W/\ell \to \ul{K}_{*}^W/\ell$ is an equivalence.
But since $H^\star(\eta;\Z/\ell) = 0$ we must have $H^\star(\ul{K}_{*}^W;\Z/\ell) = 0$.

The second vanishing claim is deduced from the cofiber sequence \eqref{eq:C4} and the equivalence \eqref{eq:eq1}.
\end{proof}

Recall that $\A^\star(1)$ is the $H^\star$-subalgebra of $\A^\star$ generated by $\Sq^1$ and $\Sq^2$.
The next lemma allows us to check exactness of short exact sequences in the finite subalgebra $\A^\star(1)$ of $\A^\star$.
\begin{lemma}[\protect{\cite[Lemma 3.2.15]{Gregersen}}]
\label{lem:A1-free}
$\A^\star$ is free as a right $\A^\star(1)$-module.
\end{lemma}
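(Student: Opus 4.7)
The plan is to follow Milnor's classical strategy for the topological Steenrod algebra, dualizing the inclusion $\A^\star(1)\subset \A^\star$ and exploiting the explicit polynomial-style presentation of the motivic dual Steenrod algebra $\A_\star$.

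First, dualizing yields a surjection of Hopf algebroids $\A_\star \twoheadrightarrow \A(1)_\star$ onto the dual of $\A^\star(1)$, and I would identify $\A(1)_\star$ by tracking which generators of $\A_\star$ pair with elements of $\A^\star(1)$. Using the presentation $\A_\star = H_\star[\tau_0,\tau_1,\ldots,\xi_1,\xi_2,\ldots]/(\tau_i^2+(\tau+\rho\tau_0)\xi_{i+1}+\rho\tau_{i+1})$ and the bidegrees of the generators, $\A(1)_\star$ is the quotient of $\A_\star$ killing $\tau_i$ for $i\geq 2$, $\xi_j$ for $j\geq 3$, and the powers $\xi_1^4$ and $\xi_2^2$, in analogy with the classical quotient $\F_2[\xi_1,\xi_2,\ldots]\twoheadrightarrow \F_2[\xi_1,\xi_2]/(\xi_1^4,\xi_2^2)$. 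Since $\A^\star(1)$ is a finitely generated $H^\star$-module (the rank-$8$ analogue of the classical $A(1)$), $\A(1)_\star$ is a finitely generated $H_\star$-module of the same rank.

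Next I would produce an $H_\star$-linear section of this quotient, namely an $H_\star$-submodule $C_\star\subset \A_\star$ spanned as $H_\star$-module by the monomials $\xi_1^{4a_1}\xi_2^{2a_2}\xi_3^{a_3}\cdots\tau_2^{b_2}\tau_3^{b_3}\cdots$. The key claim is that multiplication gives an isomorphism
\begin{equation*}
C_\star\otimes_{H_\star}\A(1)_\star \xrightarrow{\cong} \A_\star
\end{equation*}
of $H_\star$-modules (and indeed of left $\A(1)_\star$-comodules). Dualizing this $H_\star$-linear decomposition bidegree-by-bidegree yields an isomorphism $\A^\star \cong \Hom_{H^\star}(C_\star,H^\star)\otimes_{H^\star}\A^\star(1)$ of right $\A^\star(1)$-modules. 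Since $C_\star$ is free as $H_\star$-module on the displayed monomial basis, the $H^\star$-dual $\Hom_{H^\star}(C_\star,H^\star)$ is free on the dual basis, and the tensor product on the right is therefore a free right $\A^\star(1)$-module, as required.

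The main obstacle is the relation $\tau_i^2+(\tau+\rho\tau_0)\xi_{i+1}+\rho\tau_{i+1}=0$, which couples $\tau_i^2$ to generators of a different ``level'' and obstructs a naive polynomial decomposition. I would overcome this by introducing an increasing filtration on $\A_\star$ (for instance by total topological degree) in which the correction terms $(\tau+\rho\tau_0)\xi_{i+1}+\rho\tau_{i+1}$ lie in strictly lower filtration than the leading term $\tau_i^2$; the decomposition $C_\star\otimes_{H_\star}\A(1)_\star\cong \A_\star$ then holds on associated gradeds by a comparison of monomial bases, and an induction on filtration lifts the isomorphism. An alternative line of argument would exploit the base-change formula \eqref{eq:dual}, $\A_\star=\A_\star^\R\otimes_{\Z/2[\rho]} k_*$, proving the lemma first over the real numbers (where the structure is closest to classical topology) and then transferring to general $F$ via the tensor-product description.
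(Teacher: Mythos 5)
The overall strategy --- pass to the dual, find an $H_\star$-module complement $C_\star$ with $C_\star\tensor_{H_\star}\A(1)_\star\cong\A_\star$, then dualize --- is the right idea and is in the spirit of Milnor--Moore. But the key identifications are off. The dual $\A(1)_\star = \Hom_{H^\star}(\A^\star(1),H^\star)$ must be a free $H_\star$-module of rank $8$, since $\A^\star(1)$ has the monomial basis $\{1,\Sq^1,\Sq^2,\Sq^3,\Sq^2\Sq^1,\Sq^3\Sq^1,\Sq^2\Sq^3,\Sq^2\Sq^3\Sq^1\}$. Your proposed quotient $\A_\star/(\tau_i\ (i\geq2),\ \xi_j\ (j\geq3),\ \xi_1^4,\ \xi_2^2)$ has $H_\star$-basis $\{\tau_0^{e_0}\tau_1^{e_1}\xi_1^a\xi_2^b : e_i\in\{0,1\},\ 0\leq a<4,\ 0\leq b<2\}$, which has rank $32$, not $8$. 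You are transplanting the classical relations $\xi_1^4=\xi_2^2=0$ of $A(1)_*=\mathbf{F}_2[\xi_1,\xi_2]/(\xi_1^4,\xi_2^2)$ verbatim, but the motivic $\xi_i$ has bidegree $(2^{i+1}-2,2^i-1)$ and so corresponds to a \emph{square} of the classical generator; the correct quotient is $\A(1)_\star\cong H_\star[\tau_0,\tau_1,\xi_1]/(\tau_0^2+(\tau+\rho\tau_0)\xi_1+\rho\tau_1,\ \tau_1^2,\ \xi_1^2)$, and accordingly $C_\star$ should be spanned over $H_\star$ by the monomials $\xi_1^{2a}\xi_2^{a_2}\xi_3^{a_3}\cdots\tau_2^{\epsilon_2}\tau_3^{\epsilon_3}\cdots$, not by those you list.

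There is also a gap in the filtration argument. All four summands in the relation $\tau_i^2+\tau\xi_{i+1}+\rho\tau_0\xi_{i+1}+\rho\tau_{i+1}$ lie in the \emph{same} bidegree $(2^{i+2}-2,2^{i+1}-2)$, so filtering by total topological degree does not place the correction terms in strictly lower filtration than $\tau_i^2$; the induction you propose therefore does not get off the ground. (A $\rho$-adic filtration, under which the $\rho$-divisible terms drop out of the associated graded, would be the natural fix, and is also what underlies your alternative via $\A_\star=\A_\star^\R\tensor_{\Z/2[\rho]}k_*$, but you do not carry that out.) For comparison, the paper does not reprove this from scratch: it cites \cite[Lemma 3.2.15]{Gregersen} for freeness of $\A^\star$ as a \emph{left} $\A^\star(1)$-module, and then deduces the right-module statement by applying the conjugation $c:\A_\star\to\A_\star$, which satisfies $t(c\tensor c)\Delta=\Delta c$ and so carries a left monomial basis to a right one. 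So the substantive work you are attempting is exactly the content of Gregersen's lemma; as written, though, the identifications of $\A(1)_\star$ and $C_\star$ and the filtration step all need to be repaired before the argument goes through.
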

\begin{proof}
In \cite[Lemma 3.2.15]{Gregersen} it is proven that  $\A^\star$ is free as a left $\A^\star(1)$-module.
Since the conjugation $c : \A_\star \to \A_\star$ \cite[p.~3845]{HKO:steenrod}
satisfies $t (c \tensor c)\Delta = \Delta c$,
where $t$ is the twist, the same proof using the conjugate monomial basis shows that $\A^\star$ is free as a right $\A^\star(1)$-module.
\end{proof}

\begin{lemma}
\label{lem:A-ses}
We have a short exact sequence of left $\A^\star$-modules
\begin{align*}
0\to \Sigma^{2,1}\A^\star/\A^\star (\tau, \Sq^2 + \rho\Sq^1) &\xrightarrow{\cdot(\Sq^2 + \rho\Sq^1)}
\A^\star/\A^\star \tau \\
&\to \A^\star/\A^\star (\tau, \Sq^2 + \rho\Sq^1) \to 0.
\end{align*}
\end{lemma}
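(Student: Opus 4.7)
The plan is to construct the asserted map from right multiplication by $\Sq^2+\rho\Sq^1$, to note that exactness is essentially tautological once the map is defined, and to reduce the injectivity of the leftmost map to a finite computation inside the subalgebra $\A^\star(1)$ via the freeness statement of \Cref{lem:A1-free}.

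First I verify that right multiplication by $\Sq^2+\rho\Sq^1$ descends to a well-defined left $\A^\star$-linear map of bidegree $(2,1)$. This requires the two identities $\tau(\Sq^2+\rho\Sq^1) \in \A^\star\tau$ and $(\Sq^2+\rho\Sq^1)^2 \in \A^\star\tau$. The first is a direct expansion using the motivic Cartan formula $\Sq^2(xy) = \Sq^2(x)y + \tau\Sq^1(x)\Sq^1(y) + x\Sq^2(y)$ together with $\Sq^1(\tau) = \rho$, $\Sq^2(\tau) = 0$, and $\Sq^i(\rho) = 0$ for $i \geq 1$ (the last using the vanishing $H^{p,q}(F;\Z/2) = 0$ for $p > q$); the resulting $\rho^2$ and $\rho\Sq^1\tau$ contributions cancel in characteristic $2$, yielding $\tau(\Sq^2+\rho\Sq^1) = \Sq^2\tau$. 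The second follows by expanding and applying the motivic Adem relation $\Sq^2\Sq^2 = \tau\Sq^3\Sq^1$, the Adem relation $\Sq^1\Sq^2 = \Sq^3$, and the commutativity $\Sq^i\rho = \rho\Sq^i$ in $\A^\star$, leaving $(\Sq^2+\rho\Sq^1)^2 = \Sq^3\Sq^1\tau$. With the map in hand, exactness at the right-hand term is the definition of the quotient, and exactness in the middle is immediate: the image of right multiplication by $\Sq^2+\rho\Sq^1$ equals the image in $\A^\star/\A^\star\tau$ of the left ideal $\A^\star\cdot(\Sq^2+\rho\Sq^1)$, which is precisely the kernel of the projection onto $\A^\star/\A^\star(\tau, \Sq^2+\rho\Sq^1)$.

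The main obstacle is injectivity of the leftmost map: if $x \in \A^\star$ satisfies $x(\Sq^2+\rho\Sq^1) \in \A^\star\tau$, I must conclude $x \in \A^\star(\tau, \Sq^2+\rho\Sq^1)$. To reduce this to a finite-dimensional question, I invoke \Cref{lem:A1-free}: $\A^\star$ is free as a right $\A^\star(1)$-module. Fixing a homogeneous right $\A^\star(1)$-basis $\{e_i\}$ yields the compatible decompositions $\A^\star\tau = \bigoplus_i e_i \A^\star(1)\tau$ and $\A^\star\cdot(\Sq^2+\rho\Sq^1) = \bigoplus_i e_i \A^\star(1)(\Sq^2+\rho\Sq^1)$, so that for $x = \sum_i e_i y_i$ with $y_i \in \A^\star(1)$, the condition $x(\Sq^2+\rho\Sq^1) \in \A^\star\tau$ is equivalent to $y_i(\Sq^2+\rho\Sq^1) \in \A^\star(1)\tau$ for every $i$, and analogously for the condition $x \in \A^\star(\tau, \Sq^2+\rho\Sq^1)$. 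The claim thus reduces to verifying the analogous statement inside $\A^\star(1)$, a finite-dimensional check performed directly on the admissible-monomial $H^\star$-basis by inspecting the effect of right multiplication by $\Sq^2+\rho\Sq^1$ modulo $\A^\star(1)\tau$.
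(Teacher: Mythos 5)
Your proof is correct and follows essentially the same strategy as the paper's: establish the two identities $\Sq^2\tau = \tau(\Sq^2+\rho\Sq^1)$ and $(\Sq^2+\rho\Sq^1)^2 = \Sq^3\Sq^1\tau$ to make the map well defined, then use the freeness of $\A^\star$ over $\A^\star(1)$ (\Cref{lem:A1-free}) to reduce the one non-tautological claim (injectivity of the leftmost map) to a finite check in $\A^\star(1)$. The paper phrases the reduction as ``prove exactness for $\A^\star(1)$-modules and apply the exact functor $\A^\star\tensor_{\A^\star(1)}(-)$'' rather than decomposing over a right $\A^\star(1)$-basis and arguing componentwise, but these are the same argument, and the paper carries out the finite check by drawing $\A^\star(1)$ where you only sketch how one would.
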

\begin{proof}
By \Cref{lem:A1-free} $\A^\star$ is free as a right $\A^\star(1)$-module.
Hence it suffices to show exactness of the corresponding $\A^\star(1)$-modules and tensor up with $\A^\star$.
Exactness is easy to check with help of the following figure of $\A^\star(1)$:
\[
\begin{tikzpicture}
  \draw[fill] (0, 0) circle [radius=0.05];
  \draw[fill] (1, 0) circle [radius=0.05];
  \draw[fill] (2, 0) circle [radius=0.05];
  \draw[fill] (3, 0.5) circle [radius=0.05];
  \draw[fill] (3, -0.5) circle [radius=0.05];
  \draw[fill] (4, 0) circle [radius=0.05];
  \draw[fill] (5, 0) circle [radius=0.05];
  \draw[fill] (6, 0) circle [radius=0.05];

  {\node[above left=0pt] at (0,0) {$1$};}
  {\node[above=0pt] at (1,0) {$\Sq^1$};}
  {\node[below=-5pt] at (2,0) {$\Sq^2 + \rho\Sq^1$};}
  {\node[above=0pt] at (3,0.5) {$\Sq^3$};}
  {\node[below=0pt] at (3,-0.5) {$\Sq^2\Sq^1$};}
  {\node[above=-1pt] at (4,0) {$\Sq^3\Sq^1$};}
  {\node[above right=0pt] at (5,0) {$\Sq^2\Sq^3$};}
  {\node[below right=0pt] at (6,0) {$\Sq^2\Sq^3\Sq^1$};}

  \draw (0, 0) --(1, 0);
  \draw[] (0, 0) to [out=90, in=90](2, 0);
  \draw (2, 0) --(3, 0.5);
  \draw[] (1, 0) to [out=-90, in=180](3, -0.5);
  \draw[dashed] (2, 0) to [out=20, in=160](4, 0);
  \draw (3, -0.5) --(4, 0);
  \draw[] (3, 0.5) to[out=0, in=90](5, 0);
  \draw[] (4, 0) to [out=-90, in=-90](6, 0);
  \draw[] (5, 0) --(6, 0);
\end{tikzpicture}
\]
In the figure left multiplications by $\Sq^1$ are indicated by straight lines,
while left multiplications by $\Sq^2 + \rho\Sq^1$ are indicated by curved lines.
The dashed curve indicates the relation
\[
(\Sq^2 + \rho\Sq^1)^2 = \rho \Sq^{2} \Sq^{1} + \rho \Sq^{3} + \tau \Sq^{3} \Sq^{1}  = \Sq^3\Sq^1\tau.
\]
Note that 
\begin{equation}
\label{eq:Sq2tau}
\Sq^2\tau = \tau(\Sq^2 + \rho\Sq^1),
\end{equation}
so multiplication by $\Sq^2 + \rho\Sq^1$ is well defined on $\A^\star/\tau$.
Hence the kernel of right multiplication by $\Sq^2 + \rho\Sq^1$ on $\A^\star/\tau$ is generated as a left $\A^\star$-module by
$
\{\Sq^2 + \rho\Sq^1, \Sq^3, \Sq^2\Sq^3, \Sq^2\Sq^3\Sq^1 \},
$
that is, the image of right multiplication by $\Sq^2 + \rho\Sq^1$.
\end{proof}

\begin{lemma}
\label{lem:HKW}
Over a perfect field of characteristic not 2, the mod 2 motivic cohomology groups of the homotopy modules $\ul{K}_{*}^M/2$ and $\ul{K}_{*}^W$ are
\begin{align}
  H^\star(\ul{K}_{*}^M/2) &\cong \Sigma^{1,-1}A^\star/\A^\star\tau \label{eq:HKM2} \\
  H^\star(\ul{K}_{*}^W) &\cong \Sigma^{1,-1}A^\star/\A^\star(\tau, \Sq^2 + \rho\Sq^1). \label{eq:HKW}
\end{align}
\end{lemma}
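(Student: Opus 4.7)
The plan is to apply $H^\star(-)$ to the cofiber sequences of \Cref{lem:cofibs} and analyze the resulting long exact sequences. For \eqref{eq:HKM2}, combine \eqref{eq:C5} with \eqref{eq:eq2} to obtain the cofiber sequence $\Sigma^{0,-1}H \xrightarrow{\tau} H \to \ul{K}_{*}^{M}/2$. On mod $2$ cohomology, the induced map $\tau^\ast \colon \A^\star \to \Sigma^{0,-1}\A^\star$ is right multiplication by $\tau$. Because $\A^\star$ is free as a right $H^\star$-module and $\tau$ is a non-zero-divisor in $H^\star$, this map is injective. The long exact sequence therefore breaks into a short exact sequence, and the $(1,-1)$-shift arising from the Puppe sequence yields the isomorphism \eqref{eq:HKM2}.

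For \eqref{eq:HKW}, apply $H^\star(-)$ to the cofiber sequence \eqref{eq:C3}, namely $\Sigma^{1,1}\ul{K}_{*}^{W} \xrightarrow{\eta} \ul{K}_{*}^{W} \to \ul{K}_{*}^{M}/2$. Since $\eta$ acts as zero on mod $2$ motivic cohomology, the long exact sequence collapses to the short exact sequence
\[0 \to \Sigma^{2,1}H^\star(\ul{K}_{*}^{W}) \to H^\star(\ul{K}_{*}^{M}/2) \to H^\star(\ul{K}_{*}^{W}) \to 0\]
of left $\A^\star$-modules. Using \eqref{eq:HKM2}, the middle term is $\Sigma^{1,-1}\A^\star/\A^\star\tau$ and the right-hand term is cyclic, generated by the image of $1$. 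The short exact sequence of \Cref{lem:A-ses} has precisely the same shape after applying $\Sigma^{1,-1}$, so it suffices to identify the two sequences.

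The main obstacle is the identification of the connecting map---the motivic $\eta$-Bockstein $\beta_\eta \colon \ul{K}_{*}^{M}/2 \to \Sigma^{2,1}\ul{K}_{*}^{W}$---with right multiplication by $\Sq^2 + \rho\Sq^1$ on $\Sigma^{1,-1}\A^\star/\A^\star\tau$. Since $\beta_\eta$ is a stable cohomology operation of bidegree $(2,1)$, it is represented by some element of $\A^{2,1}$ that descends through the surjection $H \to H/\tau \simeq \ul{K}_{*}^{M}/2$. The relation $\Sq^2\tau = \tau(\Sq^2 + \rho\Sq^1)$ from \eqref{eq:Sq2tau} pins down $\Sq^2 + \rho\Sq^1$ as essentially the unique such class modulo $\A^\star\tau$, and a direct verification (on a generator, or by pull-back from a convenient base field) confirms that $\beta_\eta$ acts as right multiplication by this element. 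This identifies the two short exact sequences and yields \eqref{eq:HKW}.
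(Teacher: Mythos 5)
Your derivation of \eqref{eq:HKM2} from the cofiber sequence $\Sigma^{0,-1}H \xrightarrow{\tau} H \to \ul{K}_{*}^{M}/2$ is the paper's argument and is fine. For \eqref{eq:HKW} you also follow the paper's route --- apply $H^\star$ to \eqref{eq:C3}, obtain the short exact sequence \eqref{eq:ses2}, compare with the short exact sequence of \Cref{lem:A-ses}, and reduce to identifying the composite $g\colon H^\star(\ul{K}_{*}^{M}/2)\to H^\star(\ul{K}_{*}^{W})\to \Sigma^{-2,-1}H^\star(\ul{K}_{*}^{M}/2)$ as right multiplication by $\Sq^2+\rho\Sq^1$ --- but that identification is exactly where the substantive work lies, and your justification for it has a genuine gap.

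The claim that the relation $\Sq^2\tau = \tau(\Sq^2+\rho\Sq^1)$ ``pins down $\Sq^2+\rho\Sq^1$ as essentially the unique such class'' fails over a general field. Since $g$ is a map of cyclic left $\A^\star$-modules it is right multiplication by some $g(1)=c\Sq^2+a\Sq^1$ with $c\in\Z/2$, $a\in k_1(F)$, and the constraint that this descends to $\A^\star/\A^\star\tau$ works out to $(c\rho+a)\rho=0$ in $k_2(F)$. Over $\R$ this forces $a=\rho$ (once $c=1$), but over $\Q$ it does not: taking $a=[2]+\rho$ gives $(a+\rho)\rho=\{2,-1\}=0$ since $-1=1^2-2\cdot 1^2$ is a norm from $\Q(\sqrt 2)$, and $[2]\Sq^1\neq 0$ in $(\A^\star/\A^\star\tau)^{2,1}$, so $\Sq^2+([2]+\rho)\Sq^1$ is a genuinely different well-defined candidate. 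Your fallback --- ``a direct verification (on a generator, or by pull-back from a convenient base field)'' --- names the kind of argument needed but does not supply it; in particular a base-change strategy must first settle the prime fields, and over $\Q$ one is already in the ambiguous situation above. The paper closes the gap as follows: over $\R$, where $\A^{2,1}=\Z/2\{\rho\Sq^1,\Sq^2\}$ is small, it uses $H^{-1,-2}(\ul{K}_{*}^{W})=0$ to rule out $g=0$, base change to $\C$ to rule out $\rho\Sq^1$, and left $\A^\star$-linearity to rule out $\Sq^2$; over $\Q$ and finite fields it additionally imposes $g\circ g=0$; the general case then follows by base change from the prime field. You need some version of these extra constraints --- most cheaply $g\circ g=0$ together with the $\A^\star$-linearity condition --- to determine $g$.
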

\begin{proof}
From \eqref{eq:C5} and \eqref{eq:eq2} we obtain the following long exact sequence.
\[
\Sigma^{1,0}\A^\star \xrightarrow{\cdot \tau} \Sigma^{1,-1}\A^\star \to
H^\star(\ul{K}_{*}^M/2) \to \A^\star \xrightarrow{\cdot \tau} \Sigma^{0,-1}\A^\star
\]
Since $\tau$ is not a zero divisor this implies \eqref{eq:HKM2}.

The second claim requires more work. 
First consider the case when the base field is $\R$, so that $\A^{2,1} \cong \Z/2\{\rho\Sq^1, \Sq^2\}$. 
The cofiber sequence \eqref{eq:C3} gives rise to the following map of short exact sequences of left $\A^\star$-modules (since $H^\star(\eta) = 0$).
\[
\begin{tikzcd}[column sep=huge]
\Sigma^{2,1}H^\star(\ul{K}_{*}^{W}) \ar[r, rightarrowtail]\ar[d, "\Sigma^{2,1}f"] & H^\star(\ul{K}_{*}^M/2) \ar[r, two heads]\ar[d, "\cong"] & H^\star(\ul{K}_{*}^{W}) \ar[d, "f"] \\
\Sigma^{3,0}\A^\star/(\tau, \Sq^2 + \rho\Sq^1) \ar[r, rightarrowtail, "\cdot(\Sq^2 + \rho\Sq^1)"] & \Sigma^{1,-1}\A^\star/\tau \ar[r, two heads] & \Sigma^{1,-1}\A^\star/(\tau, \Sq^2 + \rho\Sq^1)
\end{tikzcd}
\]
The induced composite
$g: H^\star(\ul{K}_{*}^M/2) \to H^\star(\ul{K}_{*}^W) \to \Sigma^{-2,-1}H^\star(\ul{K}_{*}^M/2)$
is right multiplication by $\Sq^2 + \rho\Sq^1$.
Since $H^{-1,-2}(\ul{K}_*^W) = 0$, the composite $g$ cannot be zero in bidegree $(1,-1)$. So $g$ must send $1$ to $\Sq^2, \Sq^2 + \rho\Sq^1$ or $\rho\Sq^1$ over $\R$.
The map $g$ cannot be multiplication by $\rho\Sq^1$ since changing the base to $\C$ would force $g$ to be zero.
If $g$ sends $1$ to $\Sq^2$ then $\tau \in \A^\star \tau$ maps to $\tau\Sq^2 \not\in \A^\star\tau$ if $\rho \neq 0$. %
That is, $g$ would not be a left $\A^\star$-module map. So $g$ is forced to be $\cdot (\Sq^2 + \rho \Sq^1)$; hence the left square commutes. 
Inductively this implies that $f$ is an isomorphism and \eqref{eq:HKW}.

A similar argument also works over $\Q$ and finite fields of odd characteristic. 
The map $g$ will be right multiplication by $\Sq^2 + a\Sq^1$, where $a \in K^M_1/2$. 
That $g$ must be an $\A^\star$-module homomorphism and $g\circ g = 0$ forces $a = \rho$. 
The result for a general field is now obtained by using base change from a prime field. 
\end{proof}

The cofiber sequences of \Cref{lem:cofibs} give us short exact sequences in motivic cohomology. Hence, we obtain calculations of the mod 2 motivic cohomology of the spectra.
\begin{lemma}
\label{lem:sess}
Over a perfect field of characteristic not $2$
there are short exact sequences in mod 2 motivic cohomology
\begin{align}
  &0 \to \Sigma^{1,0}H^\star(\HZ) \to \A^\star \to H^\star(\HZ) \to 0 \label{eq:ses5} \\
  &0 \to \Sigma^{1,0}\A^\star \xrightarrow{\cdot\tau} \Sigma^{1,-1}\A^\star \to H^\star(\ul{K}_{*}^{M}/2) \to 0 \label{eq:ses1} \\
  &0 \to \Sigma^{2,1}H^\star(\ul{K}_{*}^W) \to H^\star(\ul{K}_{*}^M/2) \to H^\star(\ul{K}_{*}^W) \to 0 \label{eq:ses2}\\
  &0 \to H^\star(\HW) \to \Sigma^{0,-1}\A^\star \to \Sigma^{-1,0}H^\star(\ul{K}_{*}^W) \to 0 \label{eq:ses3}\\
  &0 \to \Sigma^{1,0}H^\star(\HZ) \to H^\star(\HW) \to H^\star(\wHZ) \to 0. \label{eq:ses4}
\end{align}
As short exact sequences of $H^\star$-modules, \eqref{eq:ses5}, \eqref{eq:ses2} and \eqref{eq:ses4} are split. %
\end{lemma}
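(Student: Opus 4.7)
The plan is to apply the functor $H^\star(-) = [-,\Sigma^\star \HZ/2]$ to each of the cofiber sequences established in \Cref{lem:cofibs}, and then convert the resulting long exact sequence into a short exact sequence by identifying one induced or connecting map as zero. For \eqref{eq:ses5}, apply $H^\star$ to \eqref{eq:C2}; multiplication by $2$ vanishes on mod $2$ motivic cohomology, so the LES degenerates, and using $H^\star(H) = \A^\star$ yields the claimed sequence. For \eqref{eq:ses1}, apply $H^\star$ to \eqref{eq:C5} together with the equivalence \eqref{eq:eq2} $\Sigma^{0,-1}H \simeq H_{\geq 1}$; the relevant map of the LES becomes multiplication by $\tau$, which is injective on $\A^\star$, and the LES collapses accordingly. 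For \eqref{eq:ses2}, apply $H^\star$ to \eqref{eq:C3}; since $\eta$ acts trivially on mod $2$ motivic cohomology, the LES breaks, and \Cref{lem:HKW} identifies the middle term. For \eqref{eq:ses4}, apply $H^\star$ to \eqref{eq:C1}; since $h$ acts trivially on mod $2$ motivic cohomology, the LES again breaks into the desired SES.

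For \eqref{eq:ses3}, apply $H^\star$ to \eqref{eq:C4}, and chain the equivalences \eqref{eq:eq1} and \eqref{eq:eq2} to identify $H^\star(\HW_{\geq 1}) \cong \Sigma^{0,-1}\A^\star$. The LES reads
\[
\Sigma^{1,-1}\A^\star \to H^\star(\ul{K}_{*}^{W}) \to H^\star(\HW) \to \Sigma^{0,-1}\A^\star \to \Sigma^{1,0}H^\star(\ul{K}_{*}^{W}).
\]
The point to verify is that the connecting map $\Sigma^{1,-1}\A^\star \to H^\star(\ul{K}_{*}^{W})$ agrees with the quotient map $\Sigma^{1,-1}\A^\star \twoheadrightarrow \Sigma^{1,-1}\A^\star/\A^\star(\tau,\Sq^2+\rho\Sq^1)$ identified in the proof of \Cref{lem:HKW}. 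Naturality of the cofiber sequences coming from \eqref{eq:C4}, \eqref{eq:eq1} and \eqref{eq:eq2} makes this identification essentially forced, after which the LES cuts into the claimed SES.

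For the splittings as $H^\star$-modules, the strategy is to exhibit an explicit $H^\star$-linear section using the admissible-monomial basis of $\A^\star$ over $H^\star$. In \eqref{eq:ses5}, admissible monomials ending in $\Sq^1$ span the submodule $\A^\star\Sq^1 \cong \Sigma^{1,0}H^\star(\HZ)$, while those not ending in $\Sq^1$ span a free $H^\star$-complement that maps isomorphically to the quotient $H^\star(\HZ) = \A^\star/\A^\star\Sq^1$. For \eqref{eq:ses2}, the analogous $k_*$-basis of $\A^\star/\A^\star\tau$ splits according to whether a monomial lies in the image of right multiplication by $\Sq^2 + \rho\Sq^1$, as prepared by \Cref{lem:A-ses}; the complementary set furnishes an $H^\star$-section. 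A parallel argument using the admissible structure and the already-computed description of $H^\star(\HW)$ produces the splitting for \eqref{eq:ses4}.

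The main obstacle is the case of \eqref{eq:ses3}, where no vanishing of $2$, $\eta$ or $h$ is available to collapse the LES directly, and one must instead match the connecting map with the explicit quotient of \Cref{lem:HKW}. The other routinely split cases are straightforward once the admissible-monomial bookkeeping is organized to respect the right action of $\Sq^1$ or $\Sq^2+\rho\Sq^1$.
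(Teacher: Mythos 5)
Your proof is correct and follows essentially the same route as the paper: apply $H^\star$ to the cofiber sequences of \Cref{lem:cofibs}, collapse the long exact sequences using that $2$, $\eta$, $h$ vanish in mod $2$ cohomology, and treat \eqref{eq:ses3} separately. A few comments on the \eqref{eq:ses3} case. First, a small shift typo: the last term of your long exact sequence should read $\Sigma^{-1,0}H^\star(\ul{K}_{*}^W)$, not $\Sigma^{1,0}H^\star(\ul{K}_{*}^W)$ (consistent with the SES in the statement). Second, your ``naturality makes the identification essentially forced'' is exactly what the paper spells out, but more precisely: one compares the long exact sequence of \eqref{eq:C4} with that of \eqref{eq:C5} under \eqref{eq:eq1} and \eqref{eq:eq2}, getting a commutative ladder in which the composite $\Sigma^{1,-1}\A^\star \twoheadrightarrow H^\star(\ul{K}_{*}^M/2) \twoheadrightarrow H^\star(\ul{K}_{*}^W)$ is surjective (the second surjection is \eqref{eq:HKW}), and this forces the connecting map for \eqref{eq:C4} to be surjective. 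Note you do not need to identify the connecting map with the quotient $\Sigma^{1,-1}\A^\star \twoheadrightarrow \Sigma^{1,-1}\A^\star/\A^\star(\tau,\Sq^2+\rho\Sq^1)$ on the nose; surjectivity alone cuts the LES. Finally, your splitting arguments via admissible bases supply details the paper asserts without proof, and they are the right idea.
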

\begin{proof}
The short exact sequences \eqref{eq:ses5}, \eqref{eq:ses1}, \eqref{eq:ses2} and \eqref{eq:ses4} are just restatements of \eqref{eq:C2}, \eqref{eq:HKM2}, \eqref{eq:C3} and \eqref{eq:C1}, since $H^\star(h) = 0 = H^\star(\eta)$.

The cofiber sequences \eqref{eq:C4} and \eqref{eq:C5} combined with the equivalences \eqref{eq:eq1} and \eqref{eq:eq2} induce the commutative diagram
\[
\begin{tikzcd}
\Sigma^{1,0}\A^\star \ar[r, "\cdot \tau"]\ar[d] & \Sigma^{1,-1}\A^\star \ar[r, two heads]\ar[d, "="] & H^\star(\ul{K}_{*}^M/2) \ar[r]\ar[d, two heads] & \A^\star \ar[d] \\
\Sigma^{1,0}H^\star(\HW) \ar[r] & \Sigma^{1,-1}\A^\star \ar[r] & H^\star(\ul{K}_{*}^W) \ar[r] & H^\star(\HW),
\end{tikzcd}
\]
where the map $H^\star(\ul{K}_{*}^M/2) \twoheadrightarrow H^\star(\ul{K}_{*}^W)$ is surjective by \eqref{eq:HKW}.
Hence the map $\Sigma^{1,-1}\A^\star \to H^\star(\ul{K}_{*}^W)$ is surjective, so \eqref{eq:ses3} is exact.
\end{proof}

\begin{corollary}
\label{cor:tom}
The mod 2 motivic cohomology of the Witt-motivic cohomology spectrum is
\[
H^\star(\HW) \cong \Sigma^{0,-1}\A^\star(\tau, \Sq^2 + \rho\Sq^1).
\]
\end{corollary}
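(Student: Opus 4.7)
The plan is to extract the corollary directly from the short exact sequence \eqref{eq:ses3} in \Cref{lem:sess}, combined with the identification of $H^\star(\ul{K}_{*}^{W})$ in \eqref{eq:HKW}. Substituting the latter, the sequence \eqref{eq:ses3} becomes
\[
0 \to H^\star(\HW) \to \Sigma^{0,-1}\A^\star \xrightarrow{\pi} \Sigma^{0,-1}\A^\star/\A^\star(\tau, \Sq^2 + \rho\Sq^1) \to 0,
\]
so the content of the corollary is that the surjection $\pi$ is (after the identification) the canonical quotient map; once this is known, the kernel is tautologically the left ideal $\Sigma^{0,-1}\A^\star(\tau, \Sq^2 + \rho\Sq^1)$.

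To identify $\pi$ as the canonical projection, I would use $\A^\star$-linearity: $\pi$ is determined by the image of the generator $1 \in \Sigma^{0,-1}\A^{0,0}$. The target in bidegree $(0,-1)$ is a copy of $\Z/2$ generated by (the class of) $1$, since $\A^\star(\tau, \Sq^2+\rho\Sq^1)$ has no contribution in bidegree $(0,0)$ (the generators $\tau$ and $\Sq^2+\rho\Sq^1$ are in bidegrees $(0,-1)$ and $(2,1)$ respectively, and $\A^\star$ lives in nonnegative cohomological degree). Exactness of the sequence forces $\pi \neq 0$ (otherwise $H^\star(\HW)$ would contain a free summand of rank $1$ in bidegree $(0,-1)$, contradicting Bachmann's computation $\pi_{0,0}(\HW) = W(F)$ via the Hurewicz-type constraint), so $\pi(1)$ is the nonzero class, i.e., the image of $1$ in the quotient.

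Thus $\pi$ agrees with the canonical projection $\A^\star \twoheadrightarrow \A^\star/\A^\star(\tau, \Sq^2+\rho\Sq^1)$ shifted by $\Sigma^{0,-1}$, and its kernel is exactly $\Sigma^{0,-1}\A^\star(\tau, \Sq^2+\rho\Sq^1)$, as desired. The main (and only) obstacle is the identification step: confirming that no sign/unit ambiguity appears and that $\pi$ really is the obvious projection, but this is controlled by $\A^\star$-linearity together with the fact that $H^{0,0} = \Z/2$ has a unique nonzero element.
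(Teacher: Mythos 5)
Your argument is correct and follows the same route the paper indicates — combine the short exact sequence \eqref{eq:ses3} with the isomorphism \eqref{eq:HKW} — with the extra step of identifying the surjection as the canonical quotient map, which the paper leaves implicit and which you rightly pin down via $\A^\star$-linearity and a bidegree count. Two small points. First, the detour through Bachmann's coefficient computation to rule out $\pi = 0$ is unnecessary: $\pi$ is the third map of a short exact sequence onto a nonzero target, hence automatically nonzero. Second, in the cohomological convention of the paper $\tau$ lies in bidegree $(0,1)$, not $(0,-1)$; the vanishing of the ideal $\A^\star(\tau, \Sq^2+\rho\Sq^1)$ in bidegree $(0,0)$ then comes from $\A^{0,-1} = H^{0,-1} = 0$ (mod $2$ motivic cohomology vanishes for $p > q$) together with $\A^{-2,-1} = 0$, and not merely from $\A^\star$ being concentrated in nonnegative topological degree. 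With that correction, your degree argument still delivers the identification of $\pi$, and the conclusion follows.
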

\begin{proof}
This follows by combining the short exact sequence \eqref{eq:ses3} and the isomorphism \eqref{eq:HKW}.
\end{proof}
This has been independently computed by Tom Bachmann \cite{Bachmann:private} to be
\[
H^\star(\HW) = \coker\left(
\begin{pmatrix}
\cdot\Sq^2 & \cdot\Sq^3\Sq^1 \\
\cdot \tau & \cdot(\Sq^2+\rho\Sq^1)
\end{pmatrix}
: \Sigma^{2,1}\A^\star \oplus \Sigma^{4,1}\A^\star \to A^\star \oplus \Sigma^{2,0}\A^\star\right).
\]
These are the same since we have the exact sequence
\begin{align}
\label{eq:tom}
\Sigma^{2,2}\A^\star(1) \oplus \Sigma^{4,2}\A^\star(1)
&\xrightarrow{
\begin{pmatrix}
\cdot\Sq^2 & \cdot\Sq^3\Sq^1 \\
\cdot\tau & \cdot(\Sq^2+\rho\Sq^1)
\end{pmatrix}} \\
&\to \Sigma^{0,1}\A^\star(1) \oplus \Sigma^{2,1}\A^\star(1)
\xrightarrow{(\cdot\tau, \cdot(\Sq^2 + \rho\Sq^1))}
\A^\star(1)(\tau, \Sq^2 + \rho\Sq^1)
\to 0 \nonumber
\end{align}
Exactness is checked using the figure in \Cref{lem:A-ses},
and then we tensor up with $\A^\star$ to see that the expressions for $H^\star(\HW)$ agree.

Next we compute the $\Ext$-groups of the generalized motivic cohomology spectra and the associated homotopy modules.
\begin{lemma}
\label{lem:modtau-ext}
The $\Ext$-groups of $\A^\star/\A^\star\tau$ and $\A^\star/\A^\star(\tau, \Sq^2 + \rho\Sq^1)$ are
\begin{equation}
\Ext^{s}_{\A^\star}(\A^\star/\A^\star\tau, H^\star) \cong
\begin{cases}
\Sigma^{0,1}H_\star/\tau & s = 1 \\
0 & \text{otherwise,}
\end{cases} \label{eq:Ext1} \\
\end{equation}
and
\begin{equation}
\Ext^{s+1}_{\A^\star}(\A^\star/\A^\star(\tau, \Sq^2 + \rho\Sq^1), H^\star)
\cong
\begin{cases}
\Sigma^{2s,s+1}H_\star/\tau & s \geq 0 \\
0 & \text{otherwise.}
\end{cases}
\label{eq:Ext2}
\end{equation}
\end{lemma}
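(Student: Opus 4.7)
The plan is to build small projective resolutions of the two modules over $\A^\star$ and apply $\Hom_{\A^\star}(-, H^\star)$. The first computation needs only a length-one resolution; the second reduces to the first by iterating the short exact sequence of \Cref{lem:A-ses}.

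For $\A^\star/\A^\star\tau$, I would use that $\tau \in H^\star$ is a non-zero-divisor and that $\A^\star$ is free as a right $H^\star$-module, so right multiplication by $\tau$ is injective and gives a length-one free resolution
\[
0 \to \Sigma^{0,1}\A^\star \xrightarrow{\cdot\tau} \A^\star \to \A^\star/\A^\star\tau \to 0.
\]
Using $\Hom_{\A^\star}(\A^\star, H^\star) = H_\star$ and $\Hom_{\A^\star}(\Sigma^{0,1}\A^\star, H^\star) = \Sigma^{0,1}H_\star$, the complex computing the $\Ext$-groups is the multiplication-by-$\tau$ map $H_\star \to \Sigma^{0,1}H_\star$; its kernel is trivial (as $\tau$ is a non-zero-divisor in $H_\star = k_*[\tau]$) and its cokernel is $\Sigma^{0,1}H_\star/\tau$.

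For $C = \A^\star/\A^\star(\tau, \Sq^2 + \rho\Sq^1)$, set $B = \A^\star/\A^\star\tau$ and apply $\Hom_{\A^\star}(-, H^\star)$ to the short exact sequence $0 \to \Sigma^{2,1}C \to B \to C \to 0$ of \Cref{lem:A-ses}. Feeding in $\Ext^s(B, H^\star) = 0$ for $s \neq 1$ from the first part, the long exact sequence collapses for $s \geq 2$ to isomorphisms $\Ext^{s+1}(C, H^\star) \cong \Sigma^{2,1}\Ext^s(C, H^\star)$, so the whole pattern is determined once $\Ext^1(C, H^\star)$ and $\Ext^2(C, H^\star)$ are known. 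These sit in the four-term exact sequence
\[
0 \to \Ext^1(C, H^\star) \to \Sigma^{0,1}H_\star/\tau \xrightarrow{\delta} \Sigma^{2,1}\Ext^1(C, H^\star) \to \Ext^2(C, H^\star) \to 0.
\]

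The crux of the argument is to show that the connecting map $\delta$ vanishes, which I would establish by a bidegree argument. Since $H_\star/\tau \cong k_*$ is supported on the diagonal $p = q$, the object $\Sigma^{0,1}H_\star/\tau$ is supported on the diagonal $t = w - 1$, forcing its subgroup $\Ext^1(C, H^\star)$ to have the same support; shifting by $(2,1)$ moves $\Sigma^{2,1}\Ext^1(C, H^\star)$ to the disjoint diagonal $t = w$. A degree-preserving map between bigraded modules with disjoint support must be zero, so $\delta = 0$, which yields $\Ext^1(C, H^\star) = \Sigma^{0,1}H_\star/\tau$ and $\Ext^2(C, H^\star) = \Sigma^{2,2}H_\star/\tau$. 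Iterating the shift relation then gives $\Ext^{s+1}(C, H^\star) = \Sigma^{2s, s+1}H_\star/\tau$ for all $s \geq 0$, while the vanishing in negative degrees is automatic.
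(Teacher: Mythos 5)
Your proof is correct and follows the same route as the paper: the length-one free resolution $\Sigma^{0,1}\A^\star \xrightarrow{\cdot\tau} \A^\star$ for \eqref{eq:Ext1}, and then the long exact sequence from \Cref{lem:A-ses} for \eqref{eq:Ext2}. You helpfully make explicit the bidegree argument (disjoint supports on the diagonals $t = w-1$ and $t = w$) showing the connecting homomorphism vanishes, a step the paper leaves implicit, and your shift $\Sigma^{0,1}$ is the correct one — the paper's printed $\Sigma^{1,0}$ in this proof appears to be a typo, as one can check against \eqref{eq:ses1}.
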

\begin{proof}
To show \eqref{eq:Ext1} use the resolution $0 \to \Sigma^{1,0}\A^\star \xrightarrow{\cdot \tau} \A^\star \to \A^\star/\tau \to 0.$
For \eqref{eq:Ext2} use the long exact sequence of Ext groups associated to the short exact sequence of \Cref{lem:A-ses}.
\end{proof}

Since $\A^\star/\tau$ is not free as an $H^\star$-module we must work a bit to compute the $\Ext_{\A^\star}(H^\star, H^\star)$-module structure.
\begin{remark}
\label{rmk:products}
Recall that for any $\A^\star$-module $M$, the Yoneda product gives $\Ext_{\A^\star}(M,H^\star)$ the structure of a right $\Ext_{\A^\star}(H^\star, H^\star)$-module as follows \cite[pp.~47-48]{Rognes:adams}.
Let $P^\bullet \to M$ and $Q^\bullet \to H^\star$ be projective $\A^\star$-resolutions of $M$ and $H^\star$ respectively,
and consider cocycles $g \in \Hom(P^u, H^\star)$ and $f \in \Hom(Q^t, H^\star)$. The Yoneda product of $g$ and $f$ is the composite $fg^t$, where $g^t$ is obtained by inductively finding lifts $g^i$ in diagram \eqref{eq:Yoneda-product}.
\begin{equation}
\label{eq:Yoneda-product}
\begin{tikzcd}
& H^\star \\
\dots \ar[r] & Q^t \ar[r]\ar[u, "f"] & \dots \ar[r] & Q^1 \ar[r] & Q^0 \ar[r] & H^\star \\
\dots \ar[r] & P^{u+t} \ar[r]\ar[u, "g^t"] & \dots \ar[r] & P^{u+1} \ar[r]\ar[u, "g^1"] & \ar[u, "g^0"] P^{u} \ar[ru, "g" below]
\end{tikzcd}
\end{equation}
In \Cref{lem:modtau-ext} we will calculate the Yoneda product using resolutions $P^\bullet \to M$ which are not necessarily projective, but which satisfy $\Ext^{i}_{\A^\star}(P^j, N) \neq 0$ only if $i = s$ for some $s$ for all left $\A^\star$-modules $N$, and $\Ext^{i}_{\A^\star}(P^j\to P^{j-1}, N) = 0$ for all $j$.
To keep the notation simple we momentarily step into the derived category of $\A^\star$-modules.
This means that we have yet another shift operation $[1]$
such that $\Ext^{i}_{\A^\star}(M, N) = \RHom_{\mathcal{D}(\A^\star)}(M, N[i])$.
Then the Yoneda product is formed in the same way, using $P^i[-s]$ in place of $P^i$ in \eqref{eq:Yoneda-product}.
\end{remark}

\begin{lemma}
Let $MW^{\geq 1}$ denote the right ideal in $\Ext_{\A^\star}(H^\star,H^\star)$ generated by those classes in positive Milnor-Witt degree.
The isomorphism \eqref{eq:Ext2} can be enhanced to an $\Ext_{\A^\star}(H^\star,H^\star)$-module isomorphism
\[
\Ext_{\A^\star}(\A^\star/\A^\star(\tau, \Sq^2 + \rho\Sq^1), H^\star)
\cong
\{a\}\Ext_{\A^\star}(H^\star, H^\star)/(MW^{\geq 1}, h_0)
\]
where $\vert a \vert = (1, (0, 1))$. 
Note that $\Ext^{s+1}_{\A^\star}(\A^\star/\A^\star(\tau, \Sq^2 + \rho\Sq^1), H^\star)$ is generated over $H_\star/\tau$ by $a\cdot h_1^s$. Note that in the cobar complex $h_0$ is represented by $[\tau_0]$ and $h_1$ is represented by $[\xi_1]$.
\end{lemma}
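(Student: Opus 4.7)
The plan is to use the short exact sequence of \Cref{lem:A-ses} to produce a connecting homomorphism, identify it with right Yoneda multiplication by $h_1$, and then verify the annihilator.

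By the $s = 0$ case of \eqref{eq:Ext2}, I take $a$ to be the generator of $\Ext^{1}_{\A^\star}(M, H^\star) \cong \Sigma^{0,1} H_\star/\tau$ of tridegree $(1, (0, 1))$, where $M = \A^\star/\A^\star(\tau, \Sq^2 + \rho\Sq^1)$. Apply $\Ext_{\A^\star}(-, H^\star)$ to the short exact sequence
\[
0 \to \Sigma^{2,1} M \xrightarrow{\cdot(\Sq^2 + \rho\Sq^1)} N \to M \to 0
\]
of \Cref{lem:A-ses}, where $N = \A^\star/\A^\star \tau$. Using \eqref{eq:Ext1}, which gives $\Ext^{s}_{\A^\star}(N, H^\star) = 0$ for $s \neq 1$, the long exact sequence yields isomorphisms
\[
\delta \colon \Sigma^{2,1} \Ext^{s}_{\A^\star}(M, H^\star) \xrightarrow{\cong} \Ext^{s+1}_{\A^\star}(M, H^\star)
\]
for each $s \geq 1$, realized by right Yoneda multiplication with the class $\gamma \in \Ext^1_{\A^\star}(M, \Sigma^{2,1} M)$ of the extension.

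The crucial step is to identify $\delta$ with right Yoneda multiplication by $h_1 \in \Ext^{1,(2,1)}_{\A^\star}(H^\star, H^\star)$. Since $\Sq^2$ is the motivic Milnor dual of $\xi_1$, with the $\rho\Sq^1$ correction reflecting the nontrivial right unit $\eta_R(\tau) = \tau + \rho\tau_0$, I would compare the two Yoneda classes either by constructing an explicit chain map of extensions, or more cleanly by passing through the dualization isomorphism of the forthcoming \Cref{lem:dualization} to $\Ext_{\A_\star}(H_\star, H_\star(\ul{K}^{W}_{*}))$, where the assertion reduces to the cobar identity that concatenation with $[\xi_1]$ implements the connecting map. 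With this in hand, a short induction shows that $a \cdot h_1^s$ generates $\Ext^{s+1}_{\A^\star}(M, H^\star) \cong \Sigma^{2s, s+1} H_\star/\tau$ as a $k_*$-module, so the map $\{a\} \Ext_{\A^\star}(H^\star, H^\star)/(MW^{\geq 1}, h_0) \to \Ext_{\A^\star}(M, H^\star)$ is surjective.

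For the annihilator, I argue on Milnor-Witt degree. The generator $a$ lies in Milnor-Witt degree $0 - 1 - 1 = -2$ and, by \eqref{eq:Ext2}, every class of $\Ext^{s+1}_{\A^\star}(M, H^\star)$ lies in Milnor-Witt degree $2s - (s+1) - (s+1) = -2$. Hence any $x \in \Ext_{\A^\star}(H^\star, H^\star)$ of positive Milnor-Witt degree satisfies $a \cdot x = 0$; in particular this kills the classes $[\xi_i]$ and $[\tau_i]$ for $i \geq 2$, whose Milnor-Witt degrees are $\geq 2$. For $a \cdot h_0 = 0$, note that the only candidate target $\Ext^{2,(1,1)}_{\A^\star}(M, H^\star) \cong k_1 \cdot a h_1$ is generated by $\rho \cdot a h_1$, so the relation reduces to a single cocycle computation, which I would carry out in the cobar complex via dualization, using the relation $d(\tau) = \rho [\tau_0]$. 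A bidegree-by-bidegree rank comparison against $\Sigma^{2s, s+1} k_*$ then rules out any further relations and completes the isomorphism.

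The hardest step will be identifying $\delta$ with right Yoneda multiplication by $h_1$: this needs either a chain-level comparison of the Yoneda extension classes, or a descent through the dualization of \Cref{lem:dualization}. A secondary obstacle is the verification of $a \cdot h_0 = 0$, which is likewise cleanest on the comodule side.
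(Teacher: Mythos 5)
Your skeleton is the same as the paper's: the spliced short exact sequence of \Cref{lem:A-ses} (equivalently the resolution \eqref{eq:A/tau-res} obtained by iterating it), the observation that all of $\Ext^{\geq 1}_{\A^\star}(\A^\star/\A^\star(\tau,\Sq^2+\rho\Sq^1),H^\star)$ sits in Milnor--Witt degree $-2$ so that products with $MW^{\geq 1}$ die for degree reasons, and a separate check for $h_0$. But the heart of the lemma is exactly the step you defer: identifying the connecting homomorphism $\delta$ with right Yoneda multiplication by $h_1$ (equivalently, showing $a h_1^s\neq 0$ for all $s$), and verifying $a\cdot h_0=0$. These are not consequences of the additive computation \eqref{eq:Ext1}--\eqref{eq:Ext2}; they are the new content, and in your write-up they appear only as two candidate strategies (``construct an explicit chain map of extensions'' or ``pass through \Cref{lem:dualization} and use a cobar identity''), neither of which is carried out. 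The paper does the first: it lifts the cocycle representing $ah_1^s$ through the resolution \eqref{eq:A/tau-res} against a projective resolution of $H^\star$ whose first term is $\bigoplus_i\A^\star\{\eta_i\}$ with $\eta_i\mapsto \Sq^{2^i}$, and the commutativity of the lifting diagram rests on the map of distinguished triangles in $\mathcal D(\A^\star)$ built from the relation $\Sq^2\tau=\tau(\Sq^2+\rho\Sq^1)$ of \eqref{eq:Sq2tau}. That one diagram does double duty: it shows $ah_1^{s+1}\neq 0$, and, because the lift lands in the $\A^\star\{\eta_1\}$-summand, it gives $ah_1^s\cdot h_i=0$ for all $i\neq 1$ at once --- in particular $a h_0=0$, which your degree argument cannot see (the potential target $\Ext^{2,(1,1)}\cong k_1\{\rho\, ah_1\}$ is nonzero whenever $\rho\neq 0$, as you note, so this really must be computed).

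Your proposed fallback route through \Cref{lem:dualization} is not obviously a shortcut either: that lemma produces an isomorphism of $\Ext$-groups but says nothing about compatibility with the $\Ext_{\A^\star}(H^\star,H^\star)$-module structures, and the asserted ``cobar identity that concatenation with $[\xi_1]$ implements the connecting map'' requires computing the relevant $\A_\star$-coaction (including the $\rho$- and $\tau_0$-correction terms coming from $\eta_R$), which is the dual of the very computation you are trying to avoid --- compare how \Cref{lem:HKW} has to pin down $\Sq^2+\rho\Sq^1$ rather than $\Sq^2$. Two smaller points: at $s=1$ your long exact sequence argument needs the extra input that $(\Sq^2+\rho\Sq^1)^*$ vanishes on $\Ext^1_{\A^\star}(\A^\star/\A^\star\tau,H^\star)$ (or a dimension count against \eqref{eq:Ext2}); and if you compute Yoneda products with the non-projective resolution \eqref{eq:A/tau-res} rather than with honest extension classes, you need the justification recorded in \Cref{rmk:products}. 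So the plan is sound and parallels the paper, but as written the decisive identifications are asserted, not proved.
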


\begin{proof}
To establish the $\Ext_{\A^\star}(H^\star, H^\star)$-module structure, we use the following resolution for calculating Yoneda products.
\begin{equation}
\label{eq:A/tau-res}
\dots \to
\Sigma^{2,1}\A^\star/\tau \xrightarrow{\cdot(\Sq^2 + \rho\Sq^1)}
\Sigma^{2,1}\A^\star/\tau \xrightarrow{\cdot(\Sq^2 + \rho\Sq^1)}
\A^\star/\tau \to
\A^\star/(\tau, \Sq^2 + \rho\Sq^1) \to 0.
\end{equation}
This is a resolution in the sense that $\Ext^{s}_{\A_\star}(\A^\star/\tau, H^\star) = 0$ if $s \neq 1$,
and $(\Sq^2 + \rho\Sq^1)^* = 0$ on $\Ext^1_{\A_\star}(\A^\star/\tau, H^\star)$.
A dimension shifting argument similar to \cite[Exercise 2.4.3]{Weibel} shows that the higher $\Ext$-groups may be calculated with this resolution.

We now show that $ah_1^s$ is nontrivial for $s \geq 1$. 
As observed in \Cref{rmk:products} we can use the resolution \eqref{eq:A/tau-res} to compute the Yoneda product.
Consider the product of the representative of $ah_1^s$ in $\Ext^{1,(2s,s)}_{\A^\star}(\Sigma^{2s,s}\A^\star/\tau,H^\star)$ with the representative of $h_1$ in $\Ext^{0,(2,1)}_{\A^\star}(Q^1, H^\star)$.
Consider a resolution of $H^\star$ beginning with 
\[
\begin{tikzcd}
\cdots \ar[r] & Q^1 = \oplus_{i\in \N} \A^\star\{\eta_i\} \ar[r, "f"] & Q^0 = \A^\star \ar[r] & H^\star
\end{tikzcd}
\]
where $\eta_i$ maps to $\Sq^{2^i}$ under $f$, and $\eta_i$ has bidegree $(2^i, 2^{i-1})$. 
By definition, $h_i$ is the dual of $\eta_i$.
The Yoneda product of $ah_1^{s}$ and $h_1$ is calculated with the $\Sigma^{2s,s}$-suspension of the diagram below.
\[
\begin{tikzcd}[column sep=huge]
\Sigma^{2,1}H^\star \\
Q^1 \supset \Sigma^{2,1}\A^\star \ar[r, "f"]\ar[u, "h_1"] & \A^\star \ar[r] & H^\star \\
\Sigma^{2,1}\A^\star/\tau[-1] \ar[r, "\cdot(\Sq^2 + \rho\Sq^1)"]\ar[u] & \A^\star/\tau[-1]\ar[u] \ar[ur, "a"]
\end{tikzcd}
\]
The maps $\A^\star/\tau[-1] \to \A^\star$ are induced by the image of the identity map by $\Hom_{\A^\star}(\A^\star, \A^\star)\to\Hom_{\A^\star}(\A^\star/\tau[-1], \A^\star)$.
The lifts of $ah_1^{s}$ are canonical, and the commutativity of this diagram follows from the map of distinguished triangles in $\mathcal D(\A^\star)$
\[
\begin{tikzcd}
\Sigma^{2,1}\A^\star/\tau[-1] \ar[r]\ar[d, "\cdot(\Sq^2 + \rho\Sq^1)"] & \Sigma^{2,2}\A^\star \ar[r, "\cdot \tau"]\ar[d, "\cdot\Sq^2"] & \Sigma^{2,1}\A^\star \ar[r]\ar[d, "\cdot(\Sq^2+\rho\Sq^1)"] &  \Sigma^{2,1} \A^\star/\tau \ar[d, "\cdot(\Sq^2 + \rho\Sq^1)"] \\
\A^\star/\tau[-1] \ar[r] & \Sigma^{0,1}\A^\star \ar[r, "\cdot \tau"] & \A^\star \ar[r] & \A^\star/\tau.
\end{tikzcd}
\]
Inductively, we see that $ah_1^s$ is nontrivial for all $s$ and $ah_1^s\cdot h_i = 0$ when $i\neq 1$. 
The products with elements of $MW^{\geq 1}$ vanish for degree reasons.
\end{proof}

\begin{corollary}
\label{cor:Ext-wHZ}
The mod 2 $\Ext$-groups of $\HW$ and $\cHZ$ are
\[
\Ext^{s,(t,w)}_{\A^\star}(H^\star(\HW), H^\star)
\cong \begin{cases}
(\Sigma^{2s,s}H_\star/\tau)_{(t,w)} & s > 0 \\
H_{t,w} & s = 0,
\end{cases}
\]
and
\[
\Ext^{s,(t,w)}_{\A^\star}(H^\star(\wHZ), H^\star)
\cong
\begin{cases}
\begin{aligned}
&\Ext^{s,(t,w)}_{\A^\star}(H^\star(\HW), H^\star) \\
&\qquad\bigoplus \\
&\Ext^{s-1,(t-1,w)}_{\A^\star}(H^\star(\HZ), H^\star)
\end{aligned}
 & s > 0\\
H_{t,w} & s = 0.
\end{cases}
\]
Furthermore $\Ext^{s,(t,w)}_{\A^\star}(H^\star(\HW), H^\star) = 0$ for $t - s - w \neq 0$, $s > 0$,
and $\Ext^{s,(t,w)}_{\A^\star}(H^\star(\HZ), H^\star) = 0$ for $t - s > 0$.
Over $\R$ we have $\Ext^{s,(t,w)}_{\A^\star_{\R}}(H^\star_\R(\HZ), H^\star_\R) = 0$ when $t-s \neq 0$ or $t - s - w$ is odd, and $s > 0$.
\end{corollary}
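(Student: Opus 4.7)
The plan is to compute the three $\Ext$-groups in the order $\HW$, $\HZ$, $\wHZ$, and then to assemble the third from the first two via the long exact sequence obtained from \eqref{eq:ses4}.

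For $\Ext^{s,(t,w)}_{\A^\star}(H^\star(\HW), H^\star)$, the identification $H^\star(\HW) \cong \Sigma^{0,-1}I$ of \Cref{cor:tom}, where $I \subset \A^\star$ is the left ideal generated by $\tau$ and $\Sq^2+\rho\Sq^1$, reduces matters to the tautological short exact sequence $0 \to I \to \A^\star \to \A^\star/I \to 0$. Since $\Ext^{\geq 1}_{\A^\star}(\A^\star, H^\star) = 0$ and $\Hom_{\A^\star}(\A^\star/I, H^\star) = 0$ (any image of the generator has to be $\tau$-torsion, and $H^\star$ is $\tau$-torsion-free), dimension shifting identifies $\Ext^s(I, H^\star)$ with $\Ext^{s+1}(\A^\star/I, H^\star)$ for $s \geq 1$, and \Cref{lem:modtau-ext} computes the latter. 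Unwinding the $\Sigma^{0,-1}$ shift yields $\Ext^{s,(t,w)}(H^\star(\HW), H^\star) \cong (\Sigma^{2s,s}H_\star/\tau)_{(t,w)}$ for $s > 0$, and since $H_\star/\tau \cong k_*$ lives on the diagonal, this is supported exactly on the Milnor--Witt zero line $t-s-w=0$. The $s=0$ case is read off from the residual short exact sequence after noting that the contributions $H_{t,w+1}$ and $(H_\star/\tau)_{t,w}$ together account for $H_{t,w}$ in every bidegree (they overlap only on that diagonal).

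For $\Ext^{s,(t,w)}_{\A^\star}(H^\star(\HZ), H^\star)$, use the identification $H^\star(\HZ) \cong \A^\star/\A^\star\Sq^1$ coming from \eqref{eq:ses5}. The freeness of $\A^\star$ as a right $\A^\star(0)$-module (where $\A^\star(0) = H^\star[\Sq^1]/((\Sq^1)^2)$), obtained from \Cref{lem:A1-free} together with the classical freeness of $\A^\star(1)$ over $\A^\star(0)$, yields via change of rings an isomorphism $\Ext^{s,(t,w)}_{\A^\star}(H^\star(\HZ), H^\star) \cong \Ext^{s,(t,w)}_{\A^\star(0)}(H^\star, H^\star)$. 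The periodic Koszul resolution of $H^\star$ over $\A^\star(0)$ then realises the right-hand side as the cohomology of $(H_{t-\bullet, w}, \cdot\Sq^1)$, so every $\Ext^{s,(t,w)}(H^\star(\HZ), H^\star)$ is a subquotient of $H_{t-s, w}$. Since $H_{p,q}$ is supported in the cone $w \leq p \leq 0$, this forces vanishing for $t - s > 0$. Over $\R$, $\Sq^1$ acts on $H^\star_\R = \F_2[\tau, \rho]$ as the derivation sending $\tau$ to $\rho$ and $\rho$ to zero, and a direct bidegree-by-bidegree computation of kernels and cokernels produces the sharper vanishing $\Ext^{s,(t,w)}_{\A^\star_\R}(H^\star_\R(\HZ), H^\star_\R) = 0$ for $s \geq 1$ whenever $t-s \neq 0$ or $t-s-w$ is odd.

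Finally, apply $\Ext_{\A^\star}(-, H^\star)$ to the short exact sequence \eqref{eq:ses4} and analyse the resulting long exact sequence. The crucial step is to show that the connecting map
\[
\partial \colon \Ext^{s,(t,w)}(H^\star(\HW), H^\star) \to \Ext^{s,(t-1,w)}(H^\star(\HZ), H^\star)
\]
vanishes for $s \geq 1$, and this holds by a pure tridegree argument: a nonzero source forces $t-s-w=0$, but then the target requires $w \leq (t-1)-s = w-1$, which is impossible. The long exact sequence therefore collapses into short exact sequences of $\F_2$-vector spaces, which split automatically, giving the claimed direct sum decomposition for $s > 0$. The $s=0$ identification with $H_{t,w}$ is then extracted from the low-degree tail of the same long exact sequence, using the computation of $\Ext^0(H^\star(\HW), H^\star)$ from the first step. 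The main obstacle I anticipate is the meticulous tridegree bookkeeping under the various $\Sigma^{a,b}$ conventions for $\A^\star$-modules; once the conventions are pinned down, the degree vanishing of $\partial$ and the splitting of the resulting short exact sequences are routine.
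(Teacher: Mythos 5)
Your overall strategy mirrors the paper's: compute $\Ext$ for $\HW$ via \Cref{cor:tom} and dimension shifting, bound $\Ext$ for $\HZ$ by a subquotient of $H_{t-s,w}$, and then assemble $\Ext$ for $\wHZ$ from the long exact sequence coming from \eqref{eq:ses4}. The $\HW$ part and the $\HZ$ part (including the Koszul/change-of-rings identification and the explicit computation over $\R$) are correct, and the $\HZ$ computation is somewhat more explicit than what the paper writes down in this proof. However, there is a genuine gap in the third step.

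Your tridegree argument shows that the restriction map
\[
r_s : \Ext^{s,(t,w)}_{\A^\star}(H^\star(\HW), H^\star) \to \Ext^{s,(t-1,w)}_{\A^\star}(H^\star(\HZ), H^\star)
\]
vanishes for $s \geq 1$, which indeed yields the short exact sequences, and hence the direct sum decomposition, for $s \geq 2$. But you never establish $r_0 = 0$. For $s = 1$ one needs the connecting map $\partial_0 : \Ext^{0,(t-1,w)}(H^\star(\HZ)) \to \Ext^{1,(t,w)}(H^\star(\cHZ))$ to be injective, and for $s = 0$ one needs $\Ext^0(H^\star(\cHZ)) = \ker r_0$ to be all of $\Ext^0(H^\star(\HW)) \cong H_{t,w}$; both of these are equivalent to $r_0 = 0$. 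The tridegree constraint does not force this: the source of $r_0$ is $H_{t,w}$, supported in the full cone $w \leq t \leq 0$, and its target is $\ker(\beta)\cap H_{t-1,w}$, so both can be nonzero simultaneously (e.g.\ $(t,w)=(0,-1)$ over $\R$ gives $\F_2\{\tau\}$ and $\F_2\{\rho\}$). The paper proves $r_0 = 0$ by a comparison that your argument omits: the map of cofiber sequences from \eqref{eq:C1} to \eqref{eq:C2} identifies the composite $\Ext^{s-1,(t-1,w)}(H^\star(\HZ)) \xrightarrow{\partial} \Ext^{s}(H^\star(\cHZ)) \to \Ext^{s}(H^\star(\HZ))$ with the boundary map of the Bockstein-type resolution \eqref{eq:ses5}, forcing $\partial$ to be split injective (hence $r_{s-1}=0$ for all $s\geq 1$). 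You need that or an equivalent argument; the ``low-degree tail'' does not close on its own.

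A smaller quibble in the $\HW$ step: the parenthetical ``they overlap only on that diagonal'' is misleading --- the two contributions $H_{t,w+1}$ and $(H_\star/\tau)_{(t,w)}$ live in disjoint regions ($w<t$ and $w=t$ respectively) and do not overlap at all; the dimension count still works, but as stated the justification is garbled. The paper instead pins down the non-split extension by comparing with $\HW \to H$, which identifies the answer as $H_{t,w}$ as an $H_\star$-module, not merely as a graded $\F_2$-vector space.
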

\begin{proof}
For $s = 0$ we get the extension
\[
0 \to (\Sigma^{0,-1}H_\star)_{(t,w)}
\to \Ext^{0,(t,w)}_{\A^\star}(H^\star(\HW), H^\star)
\to (H_\star/\tau)_{(t,w)} %
\to 0
\]
from \eqref{eq:ses3}.
We compare with mod 2 motivic cohomology using the canonical map $\HW \to H$ to determine that the extension is non-split.
For $s > 0$ we get
\begin{align*}
\Ext^{s,(t,w)}_{\A^\star}(H^\star(\HW), H^\star)
&\cong
\Ext^{s+1,(t,w)}_{\A^\star}(\Sigma^{-1,0}H^\star(\ul{K}_{*}^W), H^\star) \\
&\cong
\Ext^{s+1,(t,w)}_{\A^\star}(\Sigma^{-1,0}\Sigma^{1,-1}\A^\star/(\tau, \Sq^2 + \rho\Sq^1), H^\star) \\
&\cong
(\Sigma^{2s,s}H_\star/\tau)_{(t,w)}.
\end{align*}

From \eqref{eq:ses4} we get the long exact sequence
\begin{align}
\dots \label{eq:cHZ-les}
&\to \Ext^{s-1,(t-1,w)}_{\A^\star}(H^\star(\HZ), H^\star)
\to \Ext^{s,(t,w)}_{\A^\star}(H^\star(\cHZ), H^\star) \\
&\to \Ext^{s,(t,w)}_{\A^\star}(H^\star(\HW), H^\star)
\to \Ext^{s,(t-1,w)}_{\A^\star}(H^\star(\HZ), H^\star)
\to \dots. \nonumber
\end{align}
The composite
\[
\Ext^{s-1,(t-1,w)}_{\A^\star}(H^\star(\HZ), H^\star)
\to
\Ext^{s,(t,w)}_{\A^\star}(H^\star(\cHZ), H^\star)
\to
\Ext^{s,(t,w)}_{\A^\star}(H^\star(\HZ), H^\star)
\]
is an isomorphism by comparing \eqref{eq:C1} and \eqref{eq:C2},
hence \eqref{eq:cHZ-les} is split for $s > 0$,
and
\[
\Ext^{0,(t,w)}_{\A^\star}(H^\star(\cHZ), H^\star) \cong\Ext^{0,(t,w)}_{\A^\star}(H^\star(\HW), H^\star) \cong H_{t,w}.
\]

The statements over $\R$ can be seen by a calculation in the cobar complex, cf.~\cite[Figure 3]{DI:Real}.
\end{proof}

So far in this section we have used cohomological $\Ext$-groups.
In the next lemmas we establish how the cohomological $\Ext$-groups relate to the homological $\Ext$-groups, which we use elsewhere in this paper.
We also check that the spectra defined by Bachmann are $\A_\star$-good, hence that the $\Ext$-groups are the $E_2$-pages of their respective motivic Adams spectral sequences.
When we know that there is a dualization isomorphism between the cohomological and homological $\Ext$-groups we can compare directly with the homological $\Ext$-group of the motivic sphere spectrum. All of this can be avoided, since for the motivic sphere spectrum we know that the $\Ext$-groups satisfy a dualization isomorphism by \cite[Lemma 7.13]{DI:adams}. However, the relation between the homological and cohomological $\Ext$-groups given in \Cref{lem:dualization} seems worth elaborating.

\begin{lemma}
\label{lem:adams-res}
Suppose $\E$ is a spectrum that fits into a cofiber sequence
$\Sigma^{p,q} \E \xrightarrow{f} \E \to \F$, where the map 
\begin{equation}
\label{eq:adams-res-iso}
H_\star(\F)\tensor_{H_\star} H_\star(H^{\wedge s}) \to H_\star(\F \wedge H^{\wedge s})
\end{equation}
is an isomorphism for all $s$ and $H_\star(f) = 0$. Alternatively, suppose $\E$ is a
spectrum that fits into a cofiber sequence $\F' \to \F \to \E$, where
for both $\F$ and $\F'$ the map \eqref{eq:adams-res-iso} is an
isomorphism. Then the map \eqref{eq:adams-res-iso} is an isomorphism for $\E$ as well.
\end{lemma}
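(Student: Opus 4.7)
The plan is to deduce both cases from a five-lemma comparison of long exact sequences in $H_\star$. The crucial structural input is that $H_\star(H^{\wedge s})\cong\A_\star^{\tensor_{H_\star}s}$ is free, hence flat, as an $H_\star$-module on either side: recall from \Cref{sec:recollection} that $\A_\star$ is free as an $H_\star$-module for both unit maps $\eta_L,\eta_R$, and iterating this gives the claim. In particular, tensoring over $H_\star$ with $H_\star(H^{\wedge s})$ preserves exact sequences. Write $\alpha_X\colon H_\star(X)\tensor_{H_\star}H_\star(H^{\wedge s})\to H_\star(X\wedge H^{\wedge s})$ for the natural comparison map appearing in \eqref{eq:adams-res-iso}, which defines a morphism of long exact sequences by naturality in $X$.

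For the second hypothesis, I would take the long exact sequence in $H_\star$ of $\F'\to\F\to\E$ and tensor with the flat module $H_\star(H^{\wedge s})$, obtaining an exact sequence. Smashing the cofiber sequence with $H^{\wedge s}$ and applying $H_\star$ gives a second long exact sequence, and the maps $\alpha$ furnish a morphism between the two. Since $\alpha_\F$ and $\alpha_{\F'}$ are isomorphisms by hypothesis (including the shifts appearing in the LES), the five lemma immediately yields that $\alpha_\E$ is an isomorphism.

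For the first hypothesis, the vanishing $H_\star(f)=0$ breaks the long exact sequence in $H_\star$ of $\Sigma^{p,q}\E\xrightarrow{f}\E\to\F$ into short exact sequences
\[
0\to H_\star(\E)\to H_\star(\F)\to H_\star(\Sigma^{p+1,q}\E)\to 0,
\]
which remain exact after tensoring with the flat module $H_\star(H^{\wedge s})$. On the other side, smashing the cofiber sequence with $H^{\wedge s}$ and verifying that $(f\wedge\id_{H^{\wedge s}})_\ast$ also vanishes on $H_\star$ breaks the corresponding long exact sequence into short exact sequences as well. A diagram chase using $\alpha_\F\cong$ (and its suspensions) and the five lemma then gives $\alpha_\E\cong$ and $\alpha_{\Sigma^{p+1,q}\E}\cong$.

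The main obstacle in the first case is the verification that $(f\wedge\id_{H^{\wedge s}})_\ast=0$ on $H_\star$; this does not formally follow from pointwise vanishing $H_\star(f)=0$ alone. In the intended applications to Adams-type resolutions, the map $f$ is constructed so that $H\wedge f$ is null-homotopic (not merely zero on homotopy); smashing the null-homotopy with $H^{\wedge s}$ and using the symmetric monoidal symmetry $H\wedge f\wedge H^{\wedge s}\simeq (f\wedge H^{\wedge s})\wedge H$ then yields $H_\star(f\wedge\id_{H^{\wedge s}})=0$, which is what is needed for the five-lemma step.
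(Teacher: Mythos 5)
Your proposal follows the same route the paper sketches (``use the snake lemma and the 5-lemma''). The treatment of the second case is complete: flatness of $H_\star(H^{\wedge s})$ over $H_\star$ (a repeated tensor power of the free $H_\star$-module $\A_\star$), naturality of the comparison map $\alpha$, and the five lemma applied to the two long exact sequences finish the argument.

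Your observation about the first case is sharp and correct: to split the long exact sequence of $\Sigma^{p,q}\E\wedge H^{\wedge s}\to\E\wedge H^{\wedge s}\to\F\wedge H^{\wedge s}$ into short exact sequences one needs $(f\wedge\id_{H^{\wedge s}})_\ast=0$, and this does not follow formally from $H_\star(f)=0$ alone. As you say, what saves the day in the applications is that $H\wedge f$ is genuinely null-homotopic. Concretely, in \Cref{cor:adams-res} the first case is invoked for $f=2\colon\HZ\to\HZ$ via \eqref{eq:C2} and for $f=\eta\colon\Sigma^{1,1}\ul{K}_{*}^W\to\ul{K}_{*}^W$ via \eqref{eq:C3}; both are multiplication by an element of $\pi_\star(\S)$ whose image in $\pi_\star(H)$ vanishes, and since $X\wedge H^{\wedge s}$ is an $H$-module for $s\geq 1$ (the case $s=0$ being trivial), multiplication by that element is null on it, giving $(f\wedge\id_{H^{\wedge s}})_\ast=0$ directly.

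One small clarification on how the first case concludes: once both rows are short exact, the five lemma does not apply directly, because only $\alpha_\F$ is known to be an isomorphism while the two flanking vertical maps are $\alpha_\E$ and its suspension $\alpha_{\Sigma^{p+1,q}\E}$. The tool you want is the snake lemma, which yields $\ker\alpha_\E=0$, $\coker\alpha_{\Sigma^{p+1,q}\E}=0$, and $\ker\alpha_{\Sigma^{p+1,q}\E}\cong\coker\alpha_\E$. Since $\alpha_{\Sigma^{p+1,q}\E}$ is just a bidegree shift of $\alpha_\E$, the vanishing of $\coker\alpha_{\Sigma^{p+1,q}\E}$ already forces $\coker\alpha_\E=0$, so $\alpha_\E$ is an isomorphism. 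This suspension bootstrap is precisely what the snake lemma buys you in the self-map case, and is the reason the paper's proof sketch names both lemmas rather than just the five lemma.
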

\begin{proof}
This is straightforward; use the snake lemma and the 5-lemma.  
\end{proof}

\begin{corollary}
\label{cor:adams-res}
If $\E$ is any one of the spectra $H$, $\ul{K}_{*}^M/2$, $\HZ$,
$\ul{K}_{*}^W$, $\HW$, or $\wHZ$, the map
\begin{equation*}
H_\star(\E)\tensor_{H_\star} H_\star(H^{\wedge s}) \to H_\star(\E \wedge H^{\wedge s})
\end{equation*}
is an isomorphism. 
\end{corollary}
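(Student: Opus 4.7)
The plan is to propagate the desired isomorphism along the cofiber sequences and equivalences of \Cref{lem:cofibs} by iteratively applying \Cref{lem:adams-res}, starting from the base case $\E = H$ and ending with $\wHZ$.

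The base case $\E = H$ amounts to the identification $H_\star(H \wedge H^{\wedge s}) \cong \A_\star^{\tensor_{H_\star}(s+1)}$, which is part of the Hopf algebroid structure on $\A_\star$; alternatively $H$ is cellular and therefore $\A_\star$-good by \cite[Lemma 7.6]{DI:adams}. The equivalence $\Sigma^{0,-1}H \simeq H_{\geq 1}$ from \eqref{eq:eq2} transports this to $\E = H_{\geq 1}$. Applying the two-out-of-three form of \Cref{lem:adams-res} to the cofiber sequence \eqref{eq:C5}, namely $H_{\geq 1} \to H \to \ul{K}_{*}^M/2$, then gives the isomorphism for $\ul{K}_{*}^M/2$.

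Next, I would use the cofiber sequence \eqref{eq:C3}, $\Sigma^{1,1}\ul{K}_{*}^W \xrightarrow{\eta} \ul{K}_{*}^W \to \ul{K}_{*}^M/2$, together with the fact that $H_\star(\eta) = 0$ (observed just before \Cref{lem:cofibs}), to apply the self-map form of \Cref{lem:adams-res} and obtain the isomorphism for $\ul{K}_{*}^W$. The equivalence $\HW_{\geq 1} \simeq H_{\geq 1}$ from \eqref{eq:eq1} upgrades the $H_{\geq 1}$ case to $\HW_{\geq 1}$, and then the two-out-of-three form applied to \eqref{eq:C4}, $\HW_{\geq 1} \to \HW \to \ul{K}_{*}^W$, produces it for $\HW$.

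Finally, the cofiber sequence \eqref{eq:C2}, $\HZ \xrightarrow{2} \HZ \to H$, has $H_\star(2) = 0$ since multiplication by $2$ is the zero endomorphism of $H = \HZ/2$, so the self-map form of \Cref{lem:adams-res} yields the isomorphism for $\HZ$. Rotating \eqref{eq:C1} to the triangle $\Sigma^{-1,0}\HW \to \HZ \xrightarrow{h} \wHZ$ and applying the two-out-of-three form one last time then produces the isomorphism for $\wHZ$. All of the real work is done by \Cref{lem:adams-res}; the only thing to manage is the order of the propagations, ensuring in particular that both $\HZ$ and $\HW$ are in hand before the final step.
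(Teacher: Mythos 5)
Your proof is correct and follows the paper's intended approach: the paper's own proof is a one-line gesture at applying \Cref{lem:adams-res} to the cofiber sequences of \Cref{lem:cofibs}, and you have simply made the chain of propagations explicit, including the key observations $H_\star(\eta)=0$ and $H_\star(2)=0$. One small point of precision: for the $\HW$ step the two-out-of-three form of \Cref{lem:adams-res} as stated concludes only for the \emph{third} term of the triangle, so one should rotate \eqref{eq:C4} to $\Sigma^{-1,0}\ul{K}_{*}^W \to \HW_{\geq 1} \to \HW$ (as you do explicitly for $\wHZ$), although the underlying 5-lemma argument of course works in any position.
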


\begin{proof}
We can apply \Cref{lem:adams-res} to these spectra by using the
cofiber sequences from \Cref{lem:cofibs} and \Cref{lem:sess}.
\end{proof}

\begin{remark}
We expect the spectra in \Cref{cor:adams-res} to be cellular, in which
case the isomorphism \eqref{eq:adams-res-iso} follows from \cite[Lemma
  7.6]{DI:adams}.
\end{remark}

Next we establish a dualization lemma.
Note the shift in the (co)homological degree of the $\Ext$-groups for the spectra $\ul{K}_{*}^M/2$ and $\ul{K}_{*}^W$.
This is due to $\Hom_{H^\star}(H^\star(\ul{K}_{*}^M/2), H^\star)$ being zero,
while $\Ext^1_{H^\star}(H^\star(\ul{K}_{*}^M/2), H^\star)$ is nonzero.
From \Cref{lem:dualization} we get that \Cref{cor:Ext-wHZ} is a computation of $\Ext_{\A_\star}(H_\star, H_\star(\wHZ))$.
By \eqref{eq:adams-res-iso} this is the $E_2$-page of the motivic Adams spectral sequence of $\wHZ$.
\begin{lemma}
\label{lem:dualization}
(1) If $X$ is $H$, $\ul{K}_{*}^M/2$, $\HZ$, $\ul{K}_{*}^W$, $\HW$, or
$\wHZ$, then the adjoint of the evaluation map
\begin{equation}
\label{eq:eval}
H\wedge X \to [[X, H], H]_H
\end{equation}
is an equivalence. 
Here $[X,Y]$ is the internal $\Hom$-object in $\SH(F)$ and $[X,Y]_H = \hoeq([X, Y] \rightrightarrows [H\wedge X, Y])$
is the internal $\Hom$-object in $H$-modules.

(2) If $X$ is  $H$, $\HZ$, $\HW$, or $\wHZ$, there is an isomorphism
\[
\pi_\star([[X, H], H]_H)
\cong
\Hom_{H^\star}(H^\star(X), H^\star),
\]
and if  $X$ is $\ul{K}_{*}^M/2$ or $\ul{K}_{*}^W$, there is an isomorphism
\[
\pi_\star([[X, H], H]_H)
\cong
\Ext^{1}_{H^\star}(H^\star(X), \Sigma^{1,0}H^\star).
\]

(3) If $X$ is $H$, $\HZ$, $\HW$, or $\wHZ$ we have an isomorphism
\[
\Ext^{s}_{\A^\star}(H^\star(X), H^\star)
\to
\Ext^{s}_{\A_\star}(H_\star, \Hom_{H^\star}(H^\star(X), H^\star)).
\]
If $X$ is $\ul{K}_{*}^M/2$ or $\ul{K}_{*}^W$, there is an isomorphism
\[
\Ext^{s+1}_{\A^\star}(H^\star(X), H^\star)
\to
\Ext^{s}_{\A_\star}(H_\star, \Ext^{1}_{H^\star}(H^\star(X), H^\star)).
\]

The isomorphisms in (3) are induced by the dualization map
\[
(f : M \to H^\star)
\mapsto
(f^\dual : \Hom_{H^\star}(H^\star, H^\star) \to \Hom_{H^\star}(H^\star, M)),
\]
and are described in detail in the proof.
\end{lemma}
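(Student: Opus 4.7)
The plan is to handle the three parts in order, treating $X=H$ as the base case and propagating to the remaining spectra through the cofiber sequences of \Cref{lem:cofibs} and \Cref{lem:sess}, and then deducing (3) from the explicit description provided by (1) and (2).

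For part (1), I would first verify the evaluation equivalence for $X=H$. Here $H\wedge H$ is a free $H$-module on an $H_\star$-basis of $\A_\star$, which is finitely generated in each bidegree (motivic finite type), so the reflexivity statement follows from direct computation in the homotopy category of $H$-modules. The remaining spectra are now handled by a five-lemma argument: both functors $Y\mapsto H\wedge Y$ and $Y\mapsto [[Y,H],H]_H$ send cofiber sequences to cofiber sequences, so if the evaluation map is an equivalence on two of the three corners of a cofiber sequence, it is an equivalence on the third. Traversing the cofiber sequences \eqref{eq:C1}--\eqref{eq:C5}, \eqref{eq:eq1}--\eqref{eq:eq2} and \eqref{eq:ses1}--\eqref{eq:ses4} in an order that introduces only one new spectrum at a time then covers $\HZ$, $\HW$, $\wHZ$, $\ul{K}_{*}^M/2$, and $\ul{K}_{*}^W$.

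For part (2), the calculation of $\pi_\star([[X,H],H]_H)$ splits according to whether $H^\star(X)$ is free over $H^\star$. When $X\in\{H,\HZ,\HW,\wHZ\}$, freeness can be read off from \Cref{lem:sess} and \Cref{cor:tom} (these are the cases where the short exact sequences split as $H^\star$-modules), and then $[X,H]$ is a product of bigraded suspensions of $H$ whose double dual recovers $\Hom_{H^\star}(H^\star(X),H^\star)$. When $X\in\{\ul{K}_{*}^M/2,\ul{K}_{*}^W\}$, \Cref{lem:HKW} together with \Cref{lem:A-ses} gives a two-step free $H^\star$-resolution of $H^\star(X)$ of the form $0\to\Sigma^{*,*}\A^\star\to\Sigma^{*,*}\A^\star\to H^\star(X)\to 0$; applying $[-,H]_H$ and taking $\pi_\star$ then computes $\Ext^1_{H^\star}(H^\star(X),\Sigma^{1,0}H^\star)$, with the $(1,0)$-shift coming from the $\cdot\tau$ map in the resolution.

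For part (3), the dualization map is built from a projective $\A^\star$-resolution $P^\bullet\to H^\star(X)$. Applying $\Hom_{H^\star}(-,H^\star)$ converts a free $\A^\star$-module into a cofree $\A_\star$-comodule via the identification $\Hom_{H^\star}(\A^\star,H^\star)\cong\A_\star$, and the resulting complex computes the homological $\Ext$-groups of the dualized target. In the free case this yields the first isomorphism directly; in the torsion case the complex computes $\Ext^s_{\A_\star}(H_\star,\Ext^1_{H^\star}(H^\star(X),H^\star))$ with a shift of $1$ matching the shift in part (2), essentially by a hyper-Ext / spectral-sequence argument that collapses because $H^\star(X)$ has projective $H^\star$-dimension exactly one.

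The main obstacle is the $\tau$-torsion case. Since $\Hom_{H^\star}(H^\star(X),H^\star)=0$ for $X\in\{\ul{K}_{*}^M/2,\ul{K}_{*}^W\}$, one cannot naively dualize the $\A^\star$-resolution and interpret the output; one has to track simultaneously the projective $\A^\star$-resolution (for the cohomological Ext) and a free $H^\star$-resolution (producing the $\Ext^1_{H^\star}$), and argue that the two combine to give homological $\Ext$ into $\Ext^1_{H^\star}(H^\star(X),H^\star)$ shifted by one. Verifying the compatibility of the five-lemma step in (1) with the non-free $H^\star$-modules of \Cref{lem:HKW} is the other delicate point, since it requires that each cofiber sequence be dualizable after only finitely many operations—which holds because each of the six spectra is built by a bounded chain of cofiber sequences from $H$.
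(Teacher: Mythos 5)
The main issue is in your treatment of part (1). You claim that the cofiber sequences of \Cref{lem:cofibs} can be traversed ``in an order that introduces only one new spectrum at a time,'' but no such order exists. Starting from $H$, the sequence \eqref{eq:C5} (together with \eqref{eq:eq2}) gives $\ul{K}_*^M/2$, but you are then stuck: $\HZ$ appears only in \eqref{eq:C1} (three unknowns) and \eqref{eq:C2} (where $\HZ$ occupies two of the three corners), and $\ul{K}_*^W$ appears only in \eqref{eq:C3} (two corners) and \eqref{eq:C4} (two unknowns). A five-lemma argument as you describe it cannot bootstrap a sequence whose source and target are both the yet-to-be-established spectrum. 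The paper's proof handles precisely these two cases by a separate observation: since $2$ and $\eta$ induce the zero map after smashing with $H$, the sequences \eqref{eq:C2} and \eqref{eq:C3} become split after $H$-smashing, and the splittings on both sides of the evaluation map give the equivalence for $\HZ$ and $\ul{K}_*^W$. Without this argument your induction never reaches $\HZ$, $\ul{K}_*^W$, $\HW$, or $\wHZ$.

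For parts (2) and (3) your approach is sound but is not the one the paper takes. For (2) you compute directly from the explicit length-one free $H^\star$-resolution furnished by \Cref{lem:HKW}, whereas the paper gestures at the cofiber sequences or the EKMM universal-coefficient spectral sequence; either works, and yours is more explicit. For (3) you propose dualizing a projective $\A^\star$-resolution of $H^\star(X)$ directly and observing that the resulting complex of cofree comodules is concentrated in a single $\Ext_{H^\star}$-degree because $H^\star(X)$ has projective dimension one over $H^\star$; this is a legitimate route, but the paper instead runs a five-lemma argument comparing the long exact $\Ext_{\A^\star}$- and $\Ext_{\A_\star}$-sequences induced by a short exact sequence of $\A^\star$-modules, reducing to the base case $X=H$. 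Your route is more computational, the paper's more structural; but your sketch also glosses over the same compatibility issue the paper states explicitly, namely that the natural map $\RHom_{H^\star}(\A^\star,H^\star)\tensor_{H^\star}\RHom_{H^\star}(A,H^\star)\to\RHom_{H^\star}(\A^\star\tensor_{H^\star}A,H^\star)$ must be an isomorphism for the dual of a free $\A^\star$-module to be a cofree $\A_\star$-comodule of the expected form, and this should be verified (or cited) rather than assumed.
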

\begin{proof}
(1) For $H$ this is true by \cite[Lemma 5.2]{HKO:steenrod}. %
If $X$  is  $\ul{K}_{*}^M/2$, $\HW$, or $\wHZ$
we have the cofiber sequences \eqref{eq:C5}, \eqref{eq:C4} and \eqref{eq:C1} where \eqref{eq:eval} is an equivalence for $2$ out of $3$ of the spectra,
hence \eqref{eq:eval} is also an equivalence for the third spectrum.
When we smash \eqref{eq:C2} and \eqref{eq:C5} with $H$ we get split cofiber sequences, which implies that \eqref{eq:eval} is an equivalence for $\HZ$ and $\ul{K}_{*}^W$.

(2) This follows from the cofiber sequences in \Cref{lem:cofibs},
or by using the spectral sequence in \cite[Theorem IV.4.1]{EKMM}
which collapses on the $E_2$-page. The $E_2$-page is analyzed using \Cref{lem:sess}. %

(3)
This is true for $H$ by \cite[Lemma 5.2]{HKO:steenrod}.
Note that $\Ext_{\A_\star}(H_\star, \A_\star)$ is readily computed with the cobar complex \cite[A1.2.12]{Ravenel}.

Given $0 \to A^{-1} \to A^0 \to A^{1} \to 0$ an exact sequence of $\A^\star$-modules,
such that the canonical map $\RHom_{H^\star}(\A^\star, H^\star) \tensor_{H^\star} \RHom_{H^\star}(A^i, H^\star)\to \RHom_{H^\star}(\A^\star\tensor_{H^\star} A^i, H^\star)$ is an isomorphism,
choose projective $\A^\star$-resolutions such that the following diagram commutes
\[
\begin{tikzcd}
0\ar[r] & A^{-1} \ar[r] & A^{0} \ar[r] & A^{1} \ar[r] & 0 \\
0\ar[r] & P^{-1}_\bullet \ar[r]\ar[u] & P^{0}_\bullet \ar[r]\ar[u] & P^{-1}_\bullet \ar[r]\ar[u] & 0.
\end{tikzcd}
\]
Dualize the diagram with respect to $(-)^\dual = \Hom_{H^\star}(-, H^\star)$
and choose injective $\A_\star$-comodule resolutions $I_\bullet^i$ of $(P_\bullet^i)^\dual$ such that we have a map of distinguished triangles in $\mathcal D(\A_\star)$
\begin{equation} %
\label{eq:proj-inj}
\begin{tikzcd}
(P^{1}_\bullet)^\dual \ar[r]\ar[d] & (P^{0}_\bullet)^\dual \ar[r]\ar[d] & (P^{-1}_\bullet)^\dual \ar[r]\ar[d] &  (P^{1}_\bullet)^\dual[1]\ar[d] \\
I^{1}_\bullet \ar[r] & I^{0}_\bullet \ar[r] & I^{-1}_\bullet \ar[r] & I^{1}_\bullet[1].
\end{tikzcd}
\end{equation}
We then define the map from
$\Ext^{s}_{\A^\star}(A^i, H^\star)
= H^{s}(\RHom_{\A^\star}(P^i_\bullet, H^\star))$
to
$H^{s}(\RHom_{\A_\star}(H_\star, I^i_\bullet))$
by sending
$P^i_\bullet \to H^\star[s]$
to the composite
$H_\star[s] = (H^\star[s])^\dual \to (P^i_\bullet)^\dual \to I^i_\bullet$.
Since \eqref{eq:proj-inj} is commutative, these maps induce a map between the long exact sequences associated to $\Ext_{\A^\star}$ and $\Ext_{\A_\star}$.
Then (3) follows from the 5-lemma applied to the short exact sequences we get from \Cref{lem:sess}. Indeed, for all the spectra either $I^{i}_{\bullet}$ is an injective resolution of $\Hom_{H^\star}(A^i, H^\star)$
or $I^{i}_\bullet[1]$ is an injective resolution of $\Ext^1_{H^\star}(A^i, H^\star)$.
Hence, $H^{s}(\RHom_{\A^\star}(H_\star, I^i_\bullet))$ is
$\Ext^{s}(H_\star, \Hom_{H^\star}(A^i, H^\star))$
or 
$\Ext^{s-1}(H_\star, \Ext^1_{H^\star}(A^i, H^\star))$.
\end{proof}

Finally we arrive at the main result of this section.
In \Cref{sec:2} this isomorphism is extended to all fields of characteristic not 2 of finite virtual cohomological dimension.
\begin{proposition}
\label{prop:S-wHZ}
Over $\R$ the canonical map $\S \to \wHZ$ induces an isomorphism
\[
\Ext^{s,(t,w)}_{\A_\star^\R}(H_\star^\R, H_\star^\R) \cong \Ext^{s,(t,w)}_{\A_\star^\R}(H_\star^\R, H_\star^\R(\wHZ))
\]
for $w, t - s$ fixed and $s \gg 0$.
\end{proposition}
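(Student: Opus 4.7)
The plan is to apply the long exact sequence of $\Ext$-groups coming from the cofiber sequence $\S \xrightarrow{u} \wHZ \to C$, where $u$ is the unit map. The proposition is then equivalent to the vanishing
\[
\Ext^{s,(t,w)}_{\A_\star^\R}(H_\star^\R, H_\star^\R(C)) = 0
\]
for $w, t-s$ fixed and $s \gg 0$. Note $C$ is $\A_\star^\R$-good because $\S$ and $\wHZ$ are, by \Cref{lem:adams-res} and \Cref{cor:adams-res}, so the long exact sequence is available. I would then argue by case analysis on the stem $n = t-s$ and Milnor-Witt degree $m = t-s-w$, both of which remain fixed as $s$ varies.

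In the ``generic'' region $n \neq 0$, $m > 0$, the Guillou--Isaksen vanishing \Cref{lem:GI} kills the sphere $\Ext$-group for $s \gg 0$. On the $\wHZ$-side, \Cref{cor:Ext-wHZ} gives the splitting
\[
\Ext^{s,(t,w)}_{\A_\star^\R}(H_\star^\R, H_\star^\R(\wHZ))
\cong
\Ext^{s,(t,w)}_{\A_\star^\R}(H_\star^\R, H_\star^\R(\HW))
\oplus
\Ext^{s-1,(t-1,w)}_{\A_\star^\R}(H_\star^\R, H_\star^\R(\HZ))
\]
for $s > 0$, and both summands vanish in this region: the $\HW$-piece because $m \neq 0$; the $\HZ$-piece over $\R$ because the new stem $(t-1)-(s-1) = n$ is still nonzero and $m$ has not changed. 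Hence $\Ext(C) = 0$ here and the isomorphism follows.

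The main obstacle is the remaining region where $n = 0$ or $m \leq 0$. In these tridegrees both $\Ext$-groups carry nontrivial infinite $h_0$-tower contributions, with coefficients in $k_*(\R) = \F_2[\rho]$ spread across the cone $w \leq n \leq 0$. On the $\wHZ$-side these towers come explicitly from the $\HW$-summand (when $m = 0$, contributing $k_{s-n}(\R)$) and the $\HZ$-summand (when $n = 0$ with $m$ even) of \Cref{cor:Ext-wHZ}. Here one cannot conclude from vanishing alone; instead one must verify that $u_\ast$ induces a bijection on these towers. The plan is to work directly in the cobar complex, using the explicit $\A_\star^\R$-comodule structure of $H_\star^\R(\wHZ)$ developed in Section 3, and to cross-check using the $\rho$-Bockstein spectral sequence: over $\C$ the desired statement reduces to Adams' classical isomorphism \eqref{eq:adams-iso}, which one transfers back to $\R$ by comparing $d_r$-differentials on both sides. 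The most delicate step is matching the $h_0$-towers on the stem-zero line $n = 0$, where the cobar differential involves $\rho$-multiplication and its interaction with right $[\tau_0]$-multiplication requires a careful cobar-level analysis.
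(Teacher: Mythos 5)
Your framing via the cofiber $C$ of the unit $\S \to \wHZ$ is a legitimate alternative to the paper's direct comparison, \emph{provided} you first check that $H_\star^\R(\S) \to H_\star^\R(\wHZ)$ is injective so that the long exact sequence of $\Ext$-groups actually exists (this holds because $\S \to \wHZ \to H$ is the unit of $H$, hence $H^\star(\wHZ) \to H^\star(\S)$ is surjective and dually the map on homology is injective). Your treatment of the generic region $n \neq 0$, $m > 0$ is essentially the paper's: \Cref{lem:GI} for the sphere side, and the vanishing statements from \Cref{cor:Ext-wHZ} for the $\HW$ and $\HZ$ summands of $\Ext(\wHZ)$.

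The gap is that the entire mathematical content of the proposition lives in the region you flag as ``remaining'' ($n = 0$ or $m = 0$; since $\Ext$ is concentrated in $m \geq 0$, $m \leq 0$ just means $m = 0$), and there the proposal stops at an outline. You correctly observe that vanishing cannot help here and one must show $u_\ast$ is a bijection on the surviving towers, but ``a careful cobar-level analysis'' is exactly the step the proof needs, and it is not carried out. Concretely, two things are missing. First, you need an explicit description of $\Ext_{\A_\star^\R}^{s,(t,w)}(H_\star^\R, H_\star^\R)$ in this region for $s \gg 0$: at $m = 0$ the only cobar generators are $\rho$, $[\tau_0]$, $[\xi_1]$ (with entering differentials from the generators at $m = 1$), and at $n = 0$ one must argue that the only potential cocycles for $s \gg 0$ are $\tau^{-w}[\tau_0 \vert \cdots \vert \tau_0]$ --- everything else is of the form $\rho^i x'$ with $x'$ not $\rho$-torsion, contradicting \Cref{lem:GI}, which forces $\rho$-torsion at high filtration. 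Second, and crucially, matching the surviving classes requires knowing that $h_0 = [\tau_0]$ and $h_1 = [\xi_1]$ map to the module generators of $\Ext(\HZ)$ and $\Ext(\HW)$ respectively; this is the content of \Cref{lem:generators-ok} and cannot be skipped, since both sides being abstractly isomorphic does not make the map an isomorphism. Your fallback of transferring Adams' isomorphism \eqref{eq:adams-iso} from $\C$ to $\R$ via the $\rho$-Bockstein spectral sequence is not developed and is somewhat circular: \Cref{lem:GI}, which both you and the paper rely on for the easy region, is itself proved by that method, and making it produce the exact $E_\infty$-identification at $n = 0$ would require controlling $\rho$-Bockstein differentials of unbounded length, which is precisely the difficulty the cobar computation is designed to sidestep.
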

\begin{proof}
By \Cref{lem:GI} and the vanishing in \Cref{cor:Ext-wHZ} it suffices to prove that we have an isomorphism
for $w, t - s$ fixed, $t - s = 0$, or $t - s - w = 0$.
In this case, we make a computation in the cobar complex of $\Ext_{\A_\star^\R}(H_\star^\R, H_\star^\R)$.

  When $t - s - w = 0$: For degree reasons the only elements in the cobar
  complex which lie in Milnor-Witt degree $0 = t - s - w$ are
  generated by $\rho[]$, $[\tau_0]$ and $[\xi_1]$. The cobar
  differential vanishes on all of these generators. The only entering
  differentials are generated by $d(\tau) = \rho\tau_0$,
  $d([\tau_1]) = [\xi_1\vert \tau_0]$, and
  $d([\tau_0\xi_1])=[\tau_0\vert \xi_1] + [\xi_1 \vert \tau_0]$.
  Indeed, for degree reasons, elements in the cobar complex which lie
  in Milnor-Witt degree $1 = t - s - w$ are generated as a module over
  Milnor-Witt degree 0 by the elements $\tau[]$, $[\tau_1]$,
  $[\tau_0\xi_1]$, and $[\xi_1^2]$ (note that
  $[\tau_0^2] = \tau[\xi_1] + \rho[\tau_0\xi_1] +\rho[\tau_1]$).

  When $t - s = 0$: Choose a representative $x$ in the cobar complex
  for an element of $\Ext_{\A_\star^\R}^{s,(t,w)}(H_\star^\R, H_\star^\R)$.  That is, $x$ is
  some sum of monomials in $\rho$, $\tau$, $\xi_i$ and $\tau_i$ in the cobar complex.
  If $x$ is not of the form $\tau^{-w}[\tau_0\vert \dots \vert
  \tau_0]$, then for degree reason $x = \rho^i x'$ for some $x'$ and $i \gg 0$ when $s \gg 0$.
  Since there is no $\rho$-torsion in the cobar complex and the cobar differential is $\rho$-linear,
  $d(x') = 0$.  However,  for $s \gg 0$, every element $x' \in
  \Ext_{\A_\star^\R}^{s',(t',w')}(H_\star^\R, H_\star^\R)$ is $\rho^{i_0}$-torsion in
  $\Ext_{\A_\star^\R}(H_\star^\R, H_\star^\R)$ for some $i_0$ when $s \gg 0$.  Indeed, this is the statement of
  \Cref{lem:GI}.  Hence any potentially nonzero element of
  $\Ext^{s,(t,w)}_{\A_\star^\R}(H_\star^\R, H_\star^\R)$ is of the form $\tau^{-w}[\tau_0\vert
  \dots \vert \tau_0]$.  The differential of such an element is
  $\rho\tau^{-w-1}[\tau_0\vert\tau_0\vert \dots \vert \tau_0]$ if $-w$
  is odd, and $0$ otherwise.

  It remains to check that $\tau^{-2w}[\tau_0]^i$ and $\rho^j[\xi_1]^k$
  map to the generators of $\Ext_{\A_\star^\R}(H_\star^\R, H_\star^\R(\cHZ))$.
  We check this for the cohomological $\Ext$-group.
  Since the map is an $\Ext_{\A^\star_\R}(H^\star_\R, H^\star_\R)$-module map,
  it suffices to check that the generators $[\tau_0]$ and $[\xi_1]$
  map to the generators in $\Ext_{\A^\star_\R}(H^\star_\R(\cHZ), H^\star_\R)$.
  This is done in \Cref{lem:generators-ok}.
  The $\Ext_{\A^\star_\R}(H^\star_\R, H^\star_\R)$-module structure of $\Ext_{\A^\star_\R}(H^\star_\R(\HW), H^\star_\R)$ is described in \Cref{lem:modtau-ext}.
  The $\Ext_{\A^\star_\R}(H^\star_\R, H^\star_\R)$-module structure of $\Ext_{\A^\star_\R}(H^\star_\R(\HZ), H^\star_\R)$ is just the same as in topology.
  This can be seen by either mimicking the proof of \Cref{lem:modtau-ext}
  or by taking complex realization.
\end{proof}

\begin{lemma}
\label{lem:generators-ok}
The map $\Ext_{\A^\star}(H^\star(\S), H^\star) \to \Ext_{\A^\star}(H^\star(\HZ), H^\star)$ maps $h_0$ (represented by $[\tau_0]$ in the cobar complex) in tridegree $(1, 1, 0)$ to the generator of $\Ext^{1,(1,0)}_{\A^\star}(H^\star(\HZ), H^\star)$.

The map $\Ext_{\A^\star}(H^\star(\S), H^\star) \to \Ext_{\A^\star}(H^\star(\HW), H^\star)$ maps $h_1$ (represented by $[\xi_1]$ in the cobar complex) in tridegree $(1, 2, 1)$ to the generator of $\Ext^{1,(2,1)}_{\A^\star}(H^\star(\HW), H^\star)$.
\end{lemma}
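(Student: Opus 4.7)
By \Cref{cor:Ext-wHZ}, both targets $\Ext^{1,(1,0)}_{\A^\star}(H^\star(\HZ), H^\star)$ and $\Ext^{1,(2,1)}_{\A^\star}(H^\star(\HW), H^\star)$ are isomorphic to $\mathbb{Z}/2$, so in each case it suffices to show the image of $h_i$ is nonzero. The induced map is a homomorphism of right $\Ext_{\A^\star}(H^\star, H^\star)$-modules via the Yoneda product, so $h_i$ is sent to $h_i \cdot 1_X$, where $1_X \in \Ext^{0,(0,0)}_{\A^\star}(H^\star(X), H^\star) = H^{0,0} = \Z/2$ is the image of $1$. In both cases this image is nonzero since the unit $\S \to X$ induces an isomorphism on $H^{0,0}(-;\Z/2)$: evident for $X = \HZ$, and for $X = \HW$ implied by the mod-$2$ reduction $\HW \to H$ realizing $W(F) \to \Z/2$ on $\pi_{0,0}$.

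For $h_0$ the argument will be immediate in the cobar complex after dualizing via \Cref{lem:dualization}. Smashing the cofiber sequence $\HZ \xrightarrow{2} \HZ \to H$ with $H$ yields a split decomposition $\A_\star \cong H_\star(\HZ) \oplus \Sigma^{1,0}H_\star(\HZ)$, since $2 = 0$ on $H$. The only generator of $\A_{\star,(1,0)} = \Z/2\{\tau_0\}$ lies in the Bockstein summand $\Sigma^{1,0}H_\star(\HZ)$, so the group $H_\star(\HZ)_{(1,0)}$ vanishes. Consequently $C^0_{(1,0)}(H_\star, H_\star(\HZ)) = 0$, so the cycle $[\tau_0] \cdot 1_{\HZ} \in C^1(H_\star, H_\star(\HZ))$ cannot be a boundary and must represent the generator of $\Ext^{1,(1,0)}_{\A_\star}(H_\star, H_\star(\HZ))$.

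For $h_1$ I would exploit the chain of right $\Ext_{\A^\star}(H^\star, H^\star)$-module isomorphisms from the proof of \Cref{cor:Ext-wHZ},
\[
\Ext^{s,(t,w)}_{\A^\star}(H^\star(\HW), H^\star) \;\cong\; \Ext^{s+1,(t,w+1)}_{\A^\star}(\A^\star/\A^\star(\tau, \Sq^2+\rho\Sq^1), H^\star),
\]
built from \eqref{eq:ses3} and \Cref{lem:HKW}. In tridegree $(0,(0,0))$ both sides are $\Z/2$, and the generator $1_{\HW}$ corresponds under the isomorphism to the generator $a$ of $\Ext^{1,(0,1)}_{\A^\star}(\A^\star/\A^\star(\tau, \Sq^2+\rho\Sq^1), H^\star)$. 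By the enhanced module description following \Cref{lem:modtau-ext}, $a \cdot h_1$ generates $\Ext^{2,(2,2)}_{\A^\star}(\A^\star/\A^\star(\tau, \Sq^2+\rho\Sq^1), H^\star)$, so the image of $h_1$ is $h_1 \cdot 1_{\HW} = a \cdot h_1$, the generator of $\Ext^{1,(2,1)}_{\A^\star}(H^\star(\HW), H^\star)$. The principal technical care lies in verifying that the connecting-homomorphism isomorphisms are genuinely right $\Ext_{\A^\star}(H^\star,H^\star)$-linear (by naturality of the long exact sequences) --- this compatibility is what makes the $h_1$ case slightly more delicate than the $h_0$ case.
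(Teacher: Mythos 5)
Your proposal takes a genuinely different route from the paper. The paper works directly in cohomological $\Ext$ and traces $h_0 = (\Sq^1 \mapsto 1)$ and $h_1 = (\Sq^2 \mapsto 1)$ through explicit maps of short exact sequences of $\A^\star$-modules built from \eqref{eq:ses5} and \eqref{eq:ses3}; you instead dualize and compute in the cobar complex, appealing to the right $\Ext_{\A^\star}(H^\star,H^\star)$-module structure.

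However, your $h_0$ argument has a concrete gap. You assert $\A_{\star,(1,0)} = \Z/2\{\tau_0\}$, but this is false whenever $k_1(F) \neq 0$. Since $\xi_1$ has bidegree $(2,1)$ and $k_1 = H_{-1,-1}$, the elements $a\xi_1$ for $a \in k_1$ also lie in bidegree $(1,0)$; moreover $\xi_1$ is primitive and hence lies in $H_\star(\HZ)$, so $H_\star(\HZ)_{(1,0)} \cong k_1\{\xi_1\}$ is \emph{not} zero. In particular this already fails over $\R$ (where $\rho\xi_1$ lives in that bidegree), which is precisely the case used in \Cref{prop:S-wHZ}. So the step ``$C^0_{(1,0)} = 0$, hence $[\tau_0]$ cannot be a boundary'' does not hold as stated. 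The desired conclusion can be rescued — one checks $d_0(a\xi_1) = a[\xi_1]$, which is linearly independent of $[\tau_0]$ in $\oA_\star\tensor_{H_\star} H_\star(\HZ)$ — but that is a different argument from the one you wrote, so you should supply it.

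For $h_1$ your plan via the module structure of \Cref{lem:modtau-ext} is a reasonable alternative, and you correctly flag the needed compatibility: that the connecting maps of the long exact sequence from \eqref{eq:ses3} are $\Ext_{\A^\star}(H^\star,H^\star)$-linear. You also need to verify that the boundary $\Ext^0(\HW) \to \Ext^1(\ul{K}_*^W)$ really sends $1_{\HW}$ to (a shift of) $a$ rather than to zero; the extension describing $\Ext^0(\HW)$ in \Cref{cor:Ext-wHZ} is non-split, so this is not automatic from degree considerations. The paper avoids these compatibility issues entirely by working with explicit first syzygies in \eqref{eq:S-to-H-res} and \eqref{eq:S-to-HW-res}, which is more elementary and arguably cleaner for this particular identification.
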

\begin{proof}
The map $H^\star(\HZ) \to H^\star(\S)$ combined with \eqref{eq:ses5} gives us a map of short exact sequences
\begin{equation}
\label{eq:S-to-H-res}
\begin{tikzcd}
0 \ar[r] & \A^\star\Sq^1 \ar[r]\ar[d] & \A^\star \ar[r]\ar[d, "="] & \A^\star/\Sq^1 \ar[r]\ar[d] & 0 \\
0 \ar[r] & \A^\star_{>0} \ar[r] & \A^\star \ar[r] & H^\star \ar[r] & 0.
\end{tikzcd}
\end{equation}
Here $\A^\star_{>0}$ is the kernel of $\A^\star \to H^\star$.
Applying $\Hom_{\A^\star}(-, H^\star)$ to \eqref{eq:S-to-H-res}
the map $(\Sq^1 \mapsto 1) \in \Hom_{\A^\star}(\A^\star_{>0}, H^\star)$
is mapped to $(\Sq^1 \mapsto 1) \in \Hom_{\A^\star}(\A^\star\Sq^1, H^\star)$.
Hence, we get that $h_0 \in \Ext^{1,(1,0)}_{\A^\star}(H^\star, H^\star)$
is mapped to the generator of $\Ext^{1,(1,0)}_{\A^\star}(H^\star(\HZ), H^\star)$.

By \eqref{eq:ses3} we have the short exact sequence
\[
0 \to \Sigma^{0,-1}\A^\star(\tau, \Sq^2 + \rho\Sq^1) \to \Sigma^{0,-1}\A^\star \to \Sigma^{0,-1}\A^\star/(\tau, \Sq^2 + \rho\Sq^1) \to 0.
\]
The map $H^\star(\HW) \to H^\star(\S)$ induces the map of short exact sequences
\begin{equation}
\label{eq:S-to-HW-res}
\begin{tikzcd}[column sep=13ex]
K \ar[r, rightarrowtail]\ar[d] & \A^\star \oplus \Sigma^{2,0}\A^\star \ar[r, "{(\cdot\tau, \cdot (\Sq^2 + \rho\Sq^1))}", two heads]
\ar[d, "{(1, 0)}"]
& \Sigma^{0,-1}\A^\star(\tau, \Sq^2 + \rho\Sq^1) \ar[d] \\
\A^\star_{>0} \ar[r, rightarrowtail] & \A^\star \ar[r, two heads] & H^\star .
\end{tikzcd}
\end{equation}
Here $K$ is just the kernel of ${(\cdot\tau, \cdot (\Sq^2 + \rho\Sq^1))}$, identified in \eqref{eq:tom}.
The map $\Sigma^{0,-1}\A^\star(\tau, \Sq^2 + \rho\Sq^1) \to H^\star$ maps $\tau$ to $1$, since $H^\star(H) \to H^\star(\HW) \to H^\star(\S)$ maps $1$ to $1$.
This implies that the middle vertical map is the identity plus zero.
The element $(\Sq^2 \mapsto 1) \in \Hom_{\A^\star}(\A^\star_{>0}, H^\star)$
corresponds to $h_1$.
By \eqref{eq:S-to-HW-res} this maps to
$((\Sq^2, \tau) \mapsto 1) \in \Hom_{\A^\star}(K, H^\star)$
(note that $(\Sq^2, \tau)$ is an element of $K$ by \eqref{eq:Sq2tau}).
Hence $h_1 \in \Ext^{1,(2,1)}_{\A^\star}(H^\star(\S), H^\star)$
maps to the generator of $\Ext^{1,(2,1)}_{\A^\star}(H^\star(\HW), H^\star)$.
\end{proof}

\section{Ext at the prime 2}
\label{sec:2}
In this section we establish an isomorphism of $\Ext$-groups between the motivic sphere spectrum and $\wHZ$ in high filtration over fields of finite virtual cohomological dimension.
We get similar vanishing lines to that of Guillou-Isaksen, but shifted by the virtual cohomological dimension of the base field.
The main observation is \Cref{lem:ext} which also seems to be of possible computational interest.
For instance, combined with \cite[Figure 3]{DI:Real} the lemma recovers Morel's $\pi_1$-conjecture \cite{RSO:April1} up to resolving one $d_2$-differential.

\begin{lemma}
  \label{lem:ext}
Suppose $X$ is $\S$, $\wHZ$, $\HZ$, $H$, $\HW$, or $\HW/2^n$.  Over a
field of characteristic not 2 we have for each $s$ an extension
\begin{align*}
0
\to \Ext^{s}_{\A_\star^\R}(H_\star^\R, H_\star^\R(X)) \tensor_{\Z/2[\rho]} k_*
&\to \Ext^{s}_{\A_\star}(H_\star, H_\star(X)) \\
&\to \Tor_1^{\Z/2[\rho]}(\Ext^{s+1}_{\A_\star^\R}(H_\star^\R, H_\star^\R(X)), k_*)
\to 0.
\end{align*}
\end{lemma}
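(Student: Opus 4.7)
The plan is to realize the cobar complex for $X$ over $F$ as the tensor product $C^\bullet(H_\star^\R, H_\star^\R(X)) \otimes_{\Z/2[\rho]} k_*$, and then apply the universal coefficient theorem for the principal ideal domain $\Z/2[\rho]$.

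First I would verify for each spectrum $X$ in the list that there is a natural identification $H_\star(X) \cong H_\star^\R(X) \otimes_{\Z/2[\rho]} k_*$ of $H_\star$-modules. This is immediate for $X = \S$ and $X = H$ from \eqref{eq:dual} (applied to $H_\star(H) = \A_\star$). For the remaining spectra one proceeds inductively using the cofiber sequences of \Cref{lem:cofibs}: since the maps $2$, $\eta$, and $h$ act as zero on mod 2 homology, each cofiber sequence yields a short exact sequence of $H_\star^\R$-modules which is preserved under the base change $- \otimes_{\Z/2[\rho]} k_*$, provided each intermediate $H_\star^\R(X)$ is flat (equivalently, $\rho$-torsion-free) over the PID $\Z/2[\rho]$. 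This flatness is verified inside the same induction: $H_\star^\R = \Z/2[\tau, \rho]$ and $\A_\star^\R = H_\star^\R(H)$ are manifestly $\rho$-torsion-free, and torsion-freeness over a PID passes through submodules and extensions. Combining these identifications with the standard base change formula for iterated tensor products yields an isomorphism of cochain complexes $C^\bullet(H_\star, H_\star(X)) \cong C^\bullet(H_\star^\R, H_\star^\R(X)) \otimes_{\Z/2[\rho]} k_*$, compatible with the (manifestly $\Z/2[\rho]$-linear) cobar differentials of \eqref{eq:cobar-diff}.

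To conclude, each term of the real cobar complex is a direct sum of copies of $H_\star^\R(X)$ (since $\oA_\star^\R$ is free as an $H_\star^\R$-module), hence flat over $\Z/2[\rho]$. I would then resolve $k_*$ by a two-term free resolution $0 \to P_1 \to P_0 \to k_* \to 0$ over the PID $\Z/2[\rho]$, tensor termwise with the flat complex $C^\bullet(H_\star^\R, H_\star^\R(X))$, and extract the claimed short exact sequence from the resulting long exact sequence in cohomology in the standard universal coefficient fashion.

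The main obstacle is verifying $\rho$-torsion-freeness of $H_\star^\R(X)$ for the spectra like $\HW$ and $\wHZ$, whose cohomology does contain $\tau$-torsion by \Cref{cor:tom}; however $\rho$ and $\tau$ behave independently in these computations, and the short exact sequences coming from \Cref{lem:cofibs} realize each new $H_\star^\R(X)$ as either a submodule or an extension of $\rho$-torsion-free modules, so the induction delivers flatness over $\Z/2[\rho]$ in every case.
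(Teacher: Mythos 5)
Your strategy — realize the $F$-cobar complex as $C^\bullet(H_\star^\R, H_\star^\R(X)) \otimes_{\Z/2[\rho]} k_*$ and then apply the universal coefficient theorem (equivalently, the Künneth/Tor spectral sequence) over the PID $\Z/2[\rho]$ — is the same as the paper's. The paper's version is terser: it appeals to $k_*$-linearity of the cobar differential (via \eqref{eq:dual}) to make the identification of cobar complexes and then invokes the Tor spectral sequence, which collapses since $\Z/2[\rho]$ has $\Tor$-dimension $1$. You usefully make explicit two points the paper leaves implicit: (i) the base-change isomorphism $H_\star(X) \cong H_\star^\R(X) \otimes_{\Z/2[\rho]} k_*$ needs to be established for each $X$ in the list, not just for $\S$ and $H$; and (ii) the terms of the $\R$-cobar complex must be flat over $\Z/2[\rho]$ in order for the Künneth/Tor spectral sequence to compute $H^*(C^\bullet \otimes_{\Z/2[\rho]} k_*)$ rather than a hyper-$\Tor$.

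There is, however, a gap in your flatness verification, precisely for the two spectra you flag as the main obstacle. You claim that the short exact sequences from \Cref{lem:cofibs} realize each new $H_\star^\R(X)$ as a submodule or an extension of $\rho$-torsion-free modules. For $\HZ$ this works ($H_\star^\R(\HZ)$ is a submodule of $\A_\star^\R$ via \eqref{eq:C2}), and for $\ul{K}_*^M/2$ and $\ul{K}_*^W$ one can argue via \eqref{eq:eq2}, \eqref{eq:C5}, \eqref{eq:C3}. But for $\HW$ and $\wHZ$ this does not close: the only cofiber sequence in \Cref{lem:cofibs} relating them to already-verified spectra is \eqref{eq:C1}, which in homology gives $0 \to H_\star^\R(\wHZ) \to H_\star^\R(\HW) \to \Sigma^{1,0}H_\star^\R(\HZ) \to 0$, expressing each unknown only in terms of the other (a circularity). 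The other available route, via \eqref{eq:C4}, \eqref{eq:eq1}, \eqref{eq:eq2} and the sequence dual to \eqref{eq:ses3}, realizes $H_\star^\R(\HW)$ as a \emph{quotient} of $\Sigma^{0,-1}\A_\star^\R$ by the image of $H_\star^\R(\ul{K}_*^W)$ — and a quotient of $\rho$-torsion-free modules need not be $\rho$-torsion-free. So one more input is needed to break the circle (for instance, a direct verification that $H_\star^\R(\HW)$ is $\rho$-torsion-free from the explicit presentation in \Cref{cor:tom}, or by observing that the relevant inclusion remains injective after reducing mod $\rho$). Your overall plan is the right one and the flatness does hold, but the inductive step as you have written it does not yet deliver it for $\HW$ and $\wHZ$.
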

\begin{proof}
The group $\Ext_{\A_\star}(H_\star, H_\star(X))$ is the cohomology of the cobar complex $C^\bullet(H_\star, H_\star(X))$ given in \eqref{eq:cobar}.
The differentials in the cobar complex are given in terms of $\Delta$, $\eta_L$ and $\eta_R$.
Hence the differentials are all $k_*$-linear (cf.~\eqref{eq:dual}),
and the cobar complex can be identified with
\[
C^\bullet(H_\star, H_\star(X))
\cong
C^\bullet(H_\star^\R, H_\star^\R(X)) \tensor_{\Z/2[\rho]} k_*.
\]
Since the ring $\Z/2[\rho]$ has $\Tor$-dimension 1 the $\Tor$-spectral sequence
\[
\Tor_{i}^{\Z/2[\rho]}(H^{j}(C^\bullet(H_\star^\R, H_\star^\R(X)), k_*)
\implies H^{i+j}(C^\bullet(H_\star, H_\star(X)))
\]
is concentrated along 2-columns and collapses to yield the above extensions.
\end{proof}

\begin{lemma}
\label{lem:vcd}
Let $F$ be a field of characteristic not 2 with $\vcd(F) = n < \infty$.
Then multiplication by $\rho \in k_1(F)$
\[
\rho : k_m(F) \to k_{m+1}(F),
\]
is an isomorphism for $m \geq n$.
\end{lemma}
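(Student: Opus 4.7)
The plan is to translate the statement into Galois cohomology via Voevodsky's norm residue theorem, set up the long exact sequence associated with the quadratic cover $F(\sqrt{-1})/F$, and then invoke the hypothesis $\vcd(F) = n$ to obtain vanishing in the appropriate range. By Voevodsky's proof of the Milnor conjecture at $\ell = 2$, the natural norm residue map $k_m(F) \to H^m_\et(F;\Z/2)$ is an isomorphism, and it carries $\rho = \{-1\} \in k_1(F)$ to the Kummer class of $-1$ in $H^1_\et(F;\Z/2)$; multiplication by $\rho$ then corresponds to cup product with this Kummer class.

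Let $E = F(\sqrt{-1})$. The case $-1 \in F^{\times 2}$ is immediate, since then $E = F$, the class $\rho$ vanishes, and the conclusion reduces to $k_m(F) = 0$ for $m > \cd_2(F) = n$. Otherwise $E/F$ is Galois of degree two with group $G = \langle\sigma\rangle \cong \Z/2$, and by definition $\cd_2(E) = n$. The short exact sequence of $G_F$-modules
\[
0 \to \Z/2 \xrightarrow{1 \mapsto 1+\sigma} \operatorname{Ind}_{G_E}^{G_F}\Z/2 \xrightarrow{\epsilon} \Z/2 \to 0
\]
has extension class equal, after inflation from $H^1(G;\Z/2)$, to $\rho \in H^1_\et(F;\Z/2)$. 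Taking $G_F$-cohomology and identifying the middle term via Shapiro's lemma $H^*(G_F;\operatorname{Ind}_{G_E}^{G_F}\Z/2) \cong H^*_\et(E;\Z/2)$ produces the long exact sequence
\[
\cdots \xrightarrow{\cup\rho} H^m_\et(F;\Z/2) \xrightarrow{\operatorname{res}} H^m_\et(E;\Z/2) \xrightarrow{\operatorname{cor}} H^m_\et(F;\Z/2) \xrightarrow{\cup\rho} H^{m+1}_\et(F;\Z/2) \to \cdots
\]

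Since $\cd_2(E) = n$, the groups $H^i_\et(E;\Z/2)$ vanish for $i > n$. For $m$ in the stated range both $E$-cohomology groups flanking $\cup\rho : H^m_\et(F;\Z/2) \to H^{m+1}_\et(F;\Z/2)$ are zero, and exactness of the long exact sequence forces $\cup\rho$ to be an isomorphism.

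The main obstacles I expect are twofold. The classical verification that the extension class of the displayed short exact sequence of Galois modules is indeed $\rho$ is a direct but slightly fiddly cocycle computation that has to be carried out carefully. More substantively, the boundary case $m = n$ is delicate, because $H^n_\et(E;\Z/2)$ need not vanish: one obtains surjectivity of $\cup\rho$ from $H^{n+1}_\et(E;\Z/2) = 0$, but injectivity requires a separate argument showing that the corestriction $H^n_\et(E;\Z/2) \to H^n_\et(F;\Z/2)$ has zero image, which is the place where the proof must genuinely use something beyond the bare long exact sequence.
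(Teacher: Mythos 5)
Your Gysin-transfer approach is exactly the paper's ``Second proof,'' which simply cites Proposition~2 of Hu--Kriz--Ormsby for the same long exact sequence associated with $F(\sqrt{-1})/F$; your derivation via the extension class of the induced module is a careful unwinding of that cited result. The paper's primary ``First proof'' instead runs through Witt-ring theory, using that $2I^m = I^{m+1}$ and that $I^m$ is torsion-free for $m\geq n$ (Elman--Karpenko--Merkurjev, Cor.~35.27, and Arason--Pfister), together with the Milnor-conjecture identification $I^m/I^{m+1}\cong k_m(F)$ turning $\langle 1,1\rangle$ into multiplication by $\rho$; this sidesteps Galois cohomology entirely. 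So you are taking one of the paper's two routes, not a new one.

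Both obstacles you flag are genuine, and not merely technical: the argument you give proves the isomorphism for $m\geq n+1$ and only surjectivity at $m=n$, and in fact the lemma as stated appears to fail at the boundary. At $m=n$ with $-1\notin F^{\times 2}$ the kernel of $\cup\rho$ equals $\operatorname{im}(\operatorname{cor}:H^n_\et(E;\Z/2)\to H^n_\et(F;\Z/2))$, and there is no formal reason for this image to vanish; and your reduction of the $-1$-square case to ``$k_m(F)=0$ for $m>n$'' silently drops the needed vanishing of $k_n(F)$, which the stated range $m\geq n$ would also require. Concrete failures: for $F=\mathbb{F}_q$ with $q\equiv 1\pmod 4$ one has $\vcd(F)=1$ and $\rho=0$, yet $k_1(\mathbb{F}_q)\cong\Z/2$, so $\rho:k_1\to k_2=0$ is not injective; and $\rho:k_2(\mathbb{Q})\to k_3(\mathbb{Q})\cong\Z/2$ has infinite kernel although $\vcd(\mathbb{Q})=2$. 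Note that the paper's own Remark~2.3 states the Witt-ring facts only for $n>\vcd(F)$, consistent with a bound $m\geq n+1$, and the downstream use in Theorem~3.3 (producing the resolution of $C$) only needs surjectivity of $\rho$ at degree $n$ and isomorphisms above it, both of which your argument supplies. The paper's first proof meets the same boundary problem: for non-formally-real $F$ the group $I^n(F)$ is torsion as soon as $k_n(F)\neq 0$.
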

\begin{proof}[First proof]
Let $I$ be the fundamental ideal in the Witt-ring $W(F)$.
Let $m \geq n$, then $2 I^m = I^{m+1}$, and $I^{m}$ is torsion free \cite[Corollary 35.27]{EKM}, \cite{AP}.
By definition $\langle 1,1 \rangle = 2$.
By the Milnor-conjecture $I^i/I^{i+1} \cong k_{i}(F)$ for all $i$,
and $\rho \in k_1(F)$ has a lift to $\langle 1,1 \rangle \in I^1$.
Hence, the map $\langle1,1\rangle : I^m/I^{m+1} \to I^{m+1}/I^{m+2}$ is an isomorphism,
i.e., $\rho : k_m(F) \to k_{m+1}(F)$ is an isomorphism.
\end{proof}
\begin{proof}[Second proof]
Use the Gysin-sequence for the quadratic extension $F \to F[\sqrt{-1}]$,
cf.~\cite[Proposition 2]{HKO:adams}.
\end{proof}

By \Cref{lem:vcd} multiplication by $\rho$ is an isomorphism in degrees greater than or equal to $n$.
Define
\[
C = \coker((\Z/2[\rho] \tensor_{\Z/2} k_n) \to k_*),
\]
where $k_n$ is considered as a graded $\Z/2$-module concentrated in degree $n$.
Observe that the module $C$ has a free $\Z/2[\rho]$-resolution 
\begin{equation}
  \label{eq:C-resolution}
0 \to \Z/2[\rho]\{J\} \to \Z/2[\rho]\{I\} \to C \to 0
\end{equation}
where $I$ is a set of generators with degrees in $[0,n]$ and $J$ is a
set of generators in degrees $[0,n+1]$, relying on the fact that
$C_k = 0$ for $k > n$.\footnote{The set $I$ can evidently be chosen to
  satisfy the condition. Now say $Y$ is an element of
  $\Z/2[\rho]\{J\}$ of degree $N$ that is at least $n+2$; we will show
  $Y$ decomposes as a linear combination of generators with degrees in
  $[0,n+1]$.  The class $Y$ maps to $\rho^2 \sum_{i\in I}a_i(\rho)x_i$
  by our assumption that the elements of $I$ are concentrated in
  degrees $[0,n]$. But $\rho \sum_{i\in I}a_i(\rho)x_i$ also maps to
  $0$ in $C_{N-1}$ as $N-1 \geq n + 1$, and so there is a linear
  combination $\sum_{j\in J} b_j(\rho)y_j$ mapping to
  $\rho \sum_{i\in I}a_i(\rho)x_i$ by exactness. However, both $Y$ and
  $\rho \sum_{j\in J} b_j(\rho)y_j$ map to the same class, hence are
  equal by exactness. Therefore $Y$ is not a generator of
  $\Z/2[\rho]\{J\}$.}%
\begin{theorem}
\label{thm:2-iso}
For a field $F$ of characteristic not 2 with $\vcd(F) = n < \infty$
the unit $\S \to \wHZ$ induces an isomorphism
\[
\Ext^{s,(t,w)}_{\A_\star}(H_\star,H_\star(\S)) \xrightarrow{\cong}
\Ext^{s,(t,w)}_{\A_\star}(H_\star,H_\star(\wHZ))
\]
for $t-s, w$ fixed and $s \gg 0$.
\end{theorem}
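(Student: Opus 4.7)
The plan is to reduce the theorem to Proposition~\ref{prop:S-wHZ} (the case $F = \mathbb{R}$) by applying the $\Z/2[\rho]$-descent packaged in Lemma~\ref{lem:ext}. Applying that lemma to both $X = \S$ and $X = \wHZ$, and using naturality of the extension in $X$, the unit $\S \to \wHZ$ fits into a commutative diagram of short exact sequences whose outer columns arise by applying $-\otimes_{\Z/2[\rho]} k_*$ and $\Tor_1^{\Z/2[\rho]}(-, k_*)$ to the corresponding $\mathbb{R}$-$\Ext$ groups. By the 5-lemma, the theorem will follow once these two outer maps are shown to be isomorphisms for fixed $(t-s, w)$ and $s \gg 0$.

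To track the bidegrees in the tensor product, observe that $\rho \in H_{-1,-1}^{\R}$ and $k_m$ sits in bidegree $(-m,-m)$, so for $x \in \Ext^{s}_{\A_\star^\R}$ of bidegree $(t', w')$ and $y \in k_m$, the class $x \otimes y$ lies in bidegree $(t'-m, w'-m)$. Hence the $(t,w)$-piece of $\Ext^{s}_{\A_\star^\R}(H_\star^\R, H_\star^\R(X)) \otimes_{\Z/2[\rho]} k_*$ is assembled from the $\mathbb{R}$-$\Ext$ groups in bidegrees $(t + m, w + m)$ for $m \geq 0$; crucially, the Milnor–Witt degree $t - s - w$ is preserved by these shifts.

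The key use of the assumption $\vcd(F) = n < \infty$ is to bound the range of $m$ that actually contributes. By Lemma~\ref{lem:vcd}, $k_{\geq n}$ is a free $\Z/2[\rho]$-module on $k_n$, and the quotient $C = k_*/k_{\geq n}$ admits the length-one free resolution~\eqref{eq:C-resolution} with generators in degrees at most $n+1$. Splicing these yields a length-one free $\Z/2[\rho]$-resolution of $k_*$ with generators in degrees $[0, n+1]$. Consequently both outer columns of the 5-lemma diagram in bidegree $(t, w)$ depend only on the finitely many $\mathbb{R}$-$\Ext$ groups in bidegrees $(t+m, w+m)$ for $m \in [0, n+1]$.

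For each such $m$, the shifted bidegree still has fixed Milnor–Witt degree $t - s - w$ and fixed weight $w + m$, so Proposition~\ref{prop:S-wHZ} supplies a threshold $s_m$ beyond which the $\mathbb{R}$-unit is an isomorphism at bidegree $(t+m, w+m)$. Taking $s \geq \max_{0 \leq m \leq n+1} s_m$, a finite maximum precisely because $\vcd(F)$ is finite, both outer maps become isomorphisms and the 5-lemma concludes. The main obstacle I anticipate is the bidegree bookkeeping, and in particular verifying that the finite-$\vcd$ hypothesis is exactly what promotes the pointwise real isomorphisms of Proposition~\ref{prop:S-wHZ} to the claimed uniform statement; without it the family $\{s_m\}_{m \geq 0}$ could fail to be bounded, and the descent argument would break down.
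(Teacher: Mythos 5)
Your proposal is correct and takes essentially the same approach as the paper: reduce to the case $F=\R$ via the $\Tor$-extension of Lemma~\ref{lem:ext}, use the finite virtual cohomological dimension (through Lemma~\ref{lem:vcd} and the free resolution~\eqref{eq:C-resolution}) to bound the range of weight shifts contributing to the fixed $(t,w)$-component, and then apply Proposition~\ref{prop:S-wHZ} finitely many times. The only cosmetic difference is that you splice the free resolution of $C$ with $\Z/2[\rho]\otimes k_n$ (horseshoe lemma) to obtain a length-one free $\Z/2[\rho]$-resolution of $k_*$ directly and read off both outer terms as homology of a two-term complex, whereas the paper keeps the short exact sequence $0 \to \Z/2[\rho]\otimes k_n \to k_* \to C \to 0$ and runs the long exact sequence of $\Tor$-groups, reducing the claim to the analogous statements for $C$ alone.
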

\begin{proof}
By \Cref{lem:ext} it suffices to show that 
\[
\Ext^{s}_{\A_\star^\R}(H_\star^\R, H_\star^\R(\S)) \tensor_{\Z/2[\rho]} k_*
\to 
\Ext^{s}_{\A_\star^\R}(H_\star^\R, H_\star^\R(\cHZ)) \tensor_{\Z/2[\rho]} k_*
\]
and
\[
\Tor_1^{\Z/2[\rho]}(\Ext^{s+1}_{\A_\star^\R}(H_\star^\R, H_\star^\R(\S)), k_*)
\to 
\Tor_1^{\Z/2[\rho]}(\Ext^{s+1}_{\A_\star^\R}(H_\star^\R, H_\star^\R(\wHZ)), k_*)
= 0
\]
are isomorphisms, for $t-s,w$ fixed and $s \gg 0$.
The long exact sequence of $\Tor$-groups associated to the short
exact sequence $0 \to \Z/2[\rho]\otimes k_n \to k_* \to C \to 0$ yields the
following exact sequence. %
\begin{align}
\label{eq:torles}
0
\to
&\Tor_1^{\Z/2[\rho]}(\Ext^{s}_{\A_\star^\R}(H_\star^\R, H_\star^\R(X)), k_*)
\to
\Tor_1^{\Z/2[\rho]}(\Ext^{s}_{\A_\star^\R}(H_\star^\R, H_\star^\R(X)), C)
\to \\
&\Ext^{s}_{\A_\star^\R}(H_\star^\R, H_\star^\R(X)) \tensor_{\Z/2} k_n
\to
\Ext^{s}_{\A_\star^\R}(H_\star^\R, H_\star^\R(X)) \tensor_{\Z/2[\rho]} k_*
\to \nonumber\\
&\Ext^{s}_{\A_\star^\R}(H_\star^\R, H_\star^\R(X)) \tensor_{\Z/2[\rho]} C
\to
0 \nonumber
\end{align}
Hence it suffices to show that
\begin{equation}
\label{eq:TorC}
\Tor_1^{\Z/2[\rho]}(\Ext^{s}_{\A_\star^\R}(H_\star^\R, H_\star^\R(\S)), C)^{(t,w)} = 0
\end{equation}
and
\begin{equation}
\label{eq:TensorC}
(\Ext^{s}_{\A_\star^\R}(H_\star^\R, H_\star^\R(\S)) \tensor_{\Z/2[\rho]} C)^{(t,w)}
\to
(\Ext^{s}_{\A_\star^\R}(H_\star^\R, H_\star^\R(\wHZ)) \tensor_{\Z/2[\rho]} C)^{(t,w)}
\end{equation}
is an isomorphism for $s \gg 0$.

Using the resolution \eqref{eq:C-resolution} we see that \eqref{eq:TorC} is a submodule of $(\Ext^{s+1}_{\A_\star^\R}(H_\star^\R, H_\star^\R) \{ J \})_{t,w}$.
As the set of bidegrees of the indices in $J$ is finite \Cref{prop:S-wHZ} implies that \eqref{eq:TorC} is 0 for $t-s,w$ fixed and $s \gg 0$.
Similarly, the source of \eqref{eq:TensorC} is a quotient of $(\Ext^{s}_{\A_\star^\R}(H_\star^\R, H_\star^\R) \{ I \})_{t,w}$. As the set of bidegrees of the indices in $I$ is finite \Cref{prop:S-wHZ} implies that \eqref{eq:TensorC} is an isomorphism for $t-s,w$ fixed and $s \gg 0$.
\end{proof}

\begin{corollary}
Let $F$ be a field of characteristic not 2 with $\vcd(F)=n$.  Then
$\Ext_{\A_{\star}}^{s,(t,w)}(H_\star, H_\star)$ vanishes if the
following three conditions are satisfied: (1) $s > \frac{1}{2}(m+3)$;
(2) $t-s < s + n + 1$; (3) $t-s > 0$ or $t-s < -n$.
\end{corollary}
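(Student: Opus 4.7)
The plan is to combine Lemma~\ref{lem:ext} with Lemma~\ref{lem:GI} and Lemma~\ref{lem:vcd} to reduce the vanishing over a general field $F$ to a finite collection of vanishing statements for $\Ext_{\A_\star^\R}(H_\star^\R, H_\star^\R)$. By Lemma~\ref{lem:ext}, the group $\Ext^{s,(t,w)}_{\A_\star}(H_\star, H_\star)$ sits between $\Ext_{\A_\star^\R}^{s}(H_\star^\R, H_\star^\R) \otimes_{\Z/2[\rho]} k_\ast$ and $\Tor_1^{\Z/2[\rho]}(\Ext_{\A_\star^\R}^{s+1}(H_\star^\R, H_\star^\R), k_\ast)$, and it is enough to show that both flanking groups vanish in bidegree $(t,w)$. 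Equivalently, by Theorem~\ref{thm:2-iso} (which itself goes through Lemma~\ref{lem:ext}) together with Corollary~\ref{cor:Ext-wHZ} and the dualization isomorphism of Lemma~\ref{lem:dualization}, one can reduce to vanishing of the $\Ext$-group of $\wHZ$, which splits as an explicit sum involving the $\Ext$-groups of $\HW$ and $\HZ$.

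Lemma~\ref{lem:vcd} is the key quantitative input: since $\rho\colon k_m \to k_{m+1}$ is an isomorphism for $m\geq n$, the module $k_\ast$ admits a finite free $\Z/2[\rho]$-resolution whose generators lie in a bounded range of degrees (this is precisely the resolution used in the proof of Theorem~\ref{thm:2-iso}). Consequently, the two flanking $\Tor$- and tensor-terms reduce to the vanishing of $\Ext_{\A_\star^\R}^{s,(t+n',w+n')}(H_\star^\R, H_\star^\R)$ and $\Ext_{\A_\star^\R}^{s+1,(t+n',w+n')}(H_\star^\R, H_\star^\R)$ at finitely many nonnegative shifts $n'$ bounded in terms of $n$.

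Next I would invoke Lemma~\ref{lem:GI} at each such shifted bidegree. Its three hypotheses correspond to the three conditions of the corollary: condition~(1) $s > (m+3)/2$ is preserved by the shift, since the Milnor--Witt degree $m = t-s-w$ is shift-invariant; condition~(3) $t-s > 0$ or $t-s < -n$ implies $t-s+n' \neq 0$ for every shift $n' \in [0,n]$, which is the hypothesis $t'-s \neq 0$ of Lemma~\ref{lem:GI} after the shift; and condition~(2) $t-s < s+n+1$ plays the role of the topological Adams-type inequality $2s > t'+1$ at the worst-case shift. Combining these, Lemma~\ref{lem:GI} kills the flanking terms, yielding the desired vanishing. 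Routing through Theorem~\ref{thm:2-iso} and Corollary~\ref{cor:Ext-wHZ} instead, condition~(3) kills the $\HZ$-summand (which vanishes for $t-s > 0$) while conditions~(2) and (3) jointly push the $\HW$-summand off its support on the line $m = 0$ with $t \leq 2s$.

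The main obstacle is the precise matching of the corollary's conditions to the ranges of shifts coming from both the generators and the relations of the $\Z/2[\rho]$-resolution of $k_\ast$; in particular, one must verify that at the boundary of condition~(2) the Adams-type inequality still holds at the largest shift contributing to the $\Tor$-term. The case $t-s < -n$ of condition~(3) also deserves a separate check: there the nonnegativity of the shifts $n' \in [0,n]$ means $t-s+n' < 0$ for every such $n'$, so the hypothesis $t'-s \neq 0$ of Lemma~\ref{lem:GI} holds automatically and the same argument goes through symmetrically.
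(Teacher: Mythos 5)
Your high-level strategy --- combine Lemma~\ref{lem:ext}, the bounded $\Z/2[\rho]$-resolution of $k_*$ coming from Lemma~\ref{lem:vcd}, and Lemma~\ref{lem:GI} applied at a finite range of shifted bidegrees --- is the same as the paper's (the paper's own proof is a single sentence deferring to Lemma~\ref{lem:ext} and Lemma~\ref{lem:GI}). The step that does not check out is your claim that condition~(2), $t-s<s+n+1$, supplies the inequality $2s>t'+1$ of Lemma~\ref{lem:GI} at the worst-case shift. As you correctly note, the flanking terms in Lemma~\ref{lem:ext} see $\Ext_{\A_\star^\R}$ in bidegrees $(t+n',w+n')$ for $n'$ in a nonnegative interval of length roughly $n$, because $\rho\in H_{-1,-1}$ and $k_*$ sits on the anti-diagonal in nonpositive bidegrees. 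At the worst shift $t'=t+n$ the requirement $2s>t'+1$ becomes $t-s<s-n-1$, which is \emph{stronger} than the stated $t-s<s+n+1$, not implied by it. (The paper's own proof of \Cref{cor:tors-2}, which runs the identical argument, uses $s>t-s+1+n$, confirming this sign.) You assert the match without verifying the inequality, and that is exactly where the argument breaks.

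A second gap: Lemma~\ref{lem:GI} carries the hypothesis $m=t-s-w>0$, and nothing in conditions (1)--(3) enforces it; condition~(1) with $m=0$ only demands $s\geq 2$. In Milnor--Witt degree $0$ the real $\Ext$-group is $\Z/2[\rho,h_0,h_1]/(\rho h_0,h_0h_1)$ (cf.~\eqref{eq:MW=0} specialized to $\R$), so over $\R$ (with $n=0$) the class $\rho^3h_1^2\in\Ext_{\A_\star^\R}^{2,(1,-1)}(H_\star^\R,H_\star^\R)$ is nonzero while $(s,t,w)=(2,1,-1)$ satisfies all three conditions. You dispose of $m<0$ correctly via vanishing of the cobar complex in negative Milnor--Witt degree, but $m=0$ is neither excluded nor handled, and it cannot be: these two discrepancies point to the corollary's statement itself needing amendment (roughly, replace (2) by $t-s<s-n-1$ and add the hypothesis $m>0$). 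A careful proof attempt has to surface that mismatch rather than assert it away.
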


\begin{proof}
This follows directly by analyzing the two ends in the short exact
sequence in \Cref{lem:ext} at the motivic sphere spectrum from the vanishing
regions obtained by Guillou and Isaksen over $\R$ described in \Cref{lem:GI}.
\end{proof}

\section{Ext at odd primes}
\label{sec:odd}
In this section we prove that $\S \to \wHZ$ induces an isomorphism of $\Ext$-groups in high filtration at odd primes as well.
Actually, since $H^\star(\wHZ) = H^\star(\HZ)$ by \Cref{lem:ell-KW} we establish that $\S \to \HZ$ induces an isomorphism of $\Ext$-groups in high filtration.
We use a Bockstein spectral sequence to extend Adams's isomorphism \eqref{eq:adams-iso} to general fields.
See \cite[2.2]{Hill} or \cite[p.~51]{Wilson:thesis} for Bockstein
spectral sequences at the prime $2$ or at odd primes over finite
fields.  We introduce a generalization of this for odd primes and
arbitrary fields.

Let $\beta$ denote the Bockstein homomorphism
$\beta : H_{t,w} \to H_{t-1,w}$, and write $B$ for the image of
$\beta$, that is, $B = \im(\beta) \subset H_\star$. Recall that $\beta$
is a derivation and $\beta^2 = 0$. Note that the image $B$ is closed
under multiplication, that is, it is a subalgebra of $H_\star$ without
unit.  Furthermore $B$ is central in $\A_\star$, and the cobar
differential is linear with respect to $B$.

We consider the decreasing filtrations of $H_\star$ and $\A_\star$ induced by the powers of $B$.
The filtration at level $n$ is $F^nH_\star = B^n H_\star$,
and $F^n\A_\star = B^n H_\star$.
This induces a filtration $\{F^nC^\bullet(H_\star, H_\star)\}_{n\geq0}$
of the cobar complex that is given in level $n$ by 
\[
F^nC^s(H_\star, H_\star) = \sum_{i_0 + \dots + i_{s+1} = n} F^{i_0} H_\star \tensor F^{i_1}\oA_\star \tensor \dots \tensor F^{i_s}\oA_\star \tensor F^{i_{s+1}} H_\star.
\]
Since the cobar differential is linear with respect to $B$ this is in fact a filtration of complexes, and of algebras,
hence a filtration of differential graded algebras.

\begin{lemma}
The filtration quotients of the generalized Bockstein filtration are
\[
\frac{F^nC^\bullet(H_\star, H_\star)}
{F^{n+1}C^\bullet(H_\star, H_\star)}
\cong
\frac{B^nH_\star}{B^{n+1}H_\star} \tensor_{\Z/\ell} C^\bullet_\Top.
\]
The tensor product $\tensor_{\Z/\ell}$ is the tensor product of graded $\Z/\ell$-modules.
\end{lemma}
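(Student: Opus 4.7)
The plan is to pass to the associated graded with respect to the Bockstein filtration and identify it explicitly with $\operatorname{gr}^n H_\star \otimes_{\Z/\ell} C^\bullet_\Top$. I would first invoke the odd-primary decomposition $\A_\star \cong H_\star \otimes_{\Z/\ell} \A_\star^\Top$ recalled in \Cref{sec:recollection}; this gives $\oA_\star \cong H_\star \otimes_{\Z/\ell} \oA_\star^\Top$ as a left $H_\star$-module via $\eta_L$, while the right $H_\star$-structure via $\eta_R$ differs according to $(\eta_L - \eta_R)(x) = \beta(x)\tau_0$.

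The key observation to establish is that $\beta$ strictly raises the Bockstein filtration. A typical element of $F^n H_\star = B^n H_\star$ is a sum of products $b_1 \cdots b_n h$ with each $b_i \in B = \operatorname{im}(\beta)$, and the derivation property together with $\beta^2 = 0$ gives $\beta(b_1 \cdots b_n h) = \pm b_1 \cdots b_n \beta(h) \in B^{n+1} H_\star = F^{n+1} H_\star$. Hence $(\eta_L - \eta_R)(x) = \beta(x)\tau_0 \in F^{n+1} \A_\star$ for every $x \in F^n H_\star$, so on the associated graded $\operatorname{gr}^\bullet H_\star$ the two unit maps coincide and $\operatorname{gr}\oA_\star \cong \operatorname{gr} H_\star \otimes_{\Z/\ell} \oA_\star^\Top$ becomes a genuinely symmetric $\operatorname{gr} H_\star$-bimodule.

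With $\eta_L = \eta_R$ on $\operatorname{gr} H_\star$, the $s$-fold tensor product over $H_\star$ collapses cleanly: iterating the identification $(\operatorname{gr} H_\star \otimes_{\Z/\ell} N_1) \otimes_{\operatorname{gr} H_\star} (\operatorname{gr} H_\star \otimes_{\Z/\ell} N_2) \cong \operatorname{gr} H_\star \otimes_{\Z/\ell} N_1 \otimes_{\Z/\ell} N_2$ yields
\begin{equation*}
\operatorname{gr}\left(\oA_\star^{\otimes_{H_\star} s}\right) \cong \operatorname{gr} H_\star \otimes_{\Z/\ell} \left(\oA_\star^\Top\right)^{\otimes_{\Z/\ell} s}.
\end{equation*}
The boundary $H_\star$-factors in the cobar complex are absorbed by $- \otimes_{H_\star} H_\star = \operatorname{id}$, so extracting the degree $n$ piece of this graded object produces $\operatorname{gr}^n H_\star \otimes_{\Z/\ell} C^s_\Top$, matching the right-hand side of the claim.

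Finally, one verifies that this isomorphism is compatible with the cobar differential. Since $\Delta$ is the topological coproduct under the decomposition and $\eta_L = \eta_R$ on $\operatorname{gr} H_\star$, the induced cobar differential on $\operatorname{gr}^n H_\star \otimes_{\Z/\ell} C^s_\Top$ is simply the identity on $\operatorname{gr}^n H_\star$ tensored with the topological cobar differential on $C^\bullet_\Top$. The main subtlety throughout is handling the noncommutative interaction of the two distinct unit maps with the $H_\star$-tensor product; passing to the associated graded is precisely the device that makes the two units agree and forces the tensor factors to separate.
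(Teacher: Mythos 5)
Your proposal is correct and takes essentially the same approach as the paper: both rest on the decomposition $\A_\star \cong H_\star \otimes_{\Z/\ell} \A_\star^\Top$ and the observation that $\beta$ raises the $B$-adic filtration (so the defect $\eta_L - \eta_R = \beta(\cdot)\tau_0$ vanishes on the associated graded), which reduces the cobar differential to the topological one modulo higher filtration. You are somewhat more explicit than the paper about why $\otimes_{H_\star}$ collapses to $\otimes_{\Z/\ell}$ on the associated graded, a step the paper compresses into ``the statement is clear considered as an isomorphism of graded abelian groups'' before verifying the differential directly on representatives.
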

\begin{proof}
Since the mod $\ell$ topological and motivic dual Steenrod algebras are the same up to the coefficients $H_\star$ (cf.~\cite{Milnor:Steenrod}, \cite[Theorem 12.6]{Voevodsky:power}, \cite[Theorem 5.6]{HKO:steenrod}), the statement is clear considered as an isomorphism of graded abelian groups.
We have to show that the differentials are correct:
Consider a general element $x = \sum_{I}a_I\gamma_I \in C^\bullet(H_\star, H_\star)$,
where $a_I \in H_\star$ and $\gamma_I$ is a bar of monomials in $\xi_i$'s and $\tau_j$'s.
The differential of $x$ is given by \eqref{eq:cobar-diff}
\begin{equation}
\label{eq:xdiff}
d(x) = \sum_I (a_I d(\gamma_I) + \beta(a_I)[\tau_0 \vert \gamma_I]).
\end{equation}
If $a_I \in B^n H_\star$,
then $\beta(a_I) \in B^{n+1} H_\star$.
Hence, modulo higher filtration \eqref{eq:xdiff} is the topological differential linear with respect to $H_\star$.
\end{proof}

\begin{corollary}
\label{cor:general-bockstein-ss}
There is a strongly convergent spectral sequence
\[
E_1^{s,t,w,n} = \left(\frac{B^nH_\star}{B^{n+1}H_\star} \tensor_{\Z/\ell} \Ext^{*,(\star)}_{\Top}\right)_{s,(t,w)}
\implies \Ext^{s,(t,w)}_{\A_\star}(H_\star, H_\star),
\]
with $d_r$-differential
$d_r : E_r^{s,t,w,n} \to E_r^{s+1,t,w,n+r}$.
\end{corollary}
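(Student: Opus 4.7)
The plan is to apply the standard spectral sequence of a decreasing filtered cochain complex to the Bockstein-filtered cobar complex $F^\bullet C^\bullet(H_\star, H_\star)$. This filtration is exhaustive (since $F^0 = C^\bullet$) and is a filtration of complexes, as established in the preceding lemma. The associated spectral sequence has $E_0^{s,n} = F^n C^s / F^{n+1} C^s$, which the preceding lemma identifies with $\tfrac{B^n H_\star}{B^{n+1} H_\star} \otimes_{\Z/\ell} C^s_\Top$, equipped with differential $1 \otimes d^\Top$.

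Next, I would pass to the $E_1$-page by taking cohomology in the $s$-direction. Since $\tfrac{B^n H_\star}{B^{n+1} H_\star}$ is a $\Z/\ell$-vector space, hence flat over $\Z/\ell$, the K\"unneth map is an isomorphism and gives
\[
E_1^{s,t,w,n} \cong \left(\tfrac{B^n H_\star}{B^{n+1} H_\star} \otimes_{\Z/\ell} \Ext^{*, (\star)}_{\Top}\right)_{s,(t,w)}
\]
as claimed. The tridegrees of the $d_r$-differential are then determined by the general formalism of the filtered complex spectral sequence: the cobar differential raises $s$ by $1$ and preserves the internal bigrading $(t, w)$, while the spectral sequence machinery raises the filtration index by $r$ on the $E_r$-page, producing $d_r : E_r^{s,t,w,n} \to E_r^{s+1,t,w,n+r}$.

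The main obstacle is strong convergence. I would invoke Boardman's criterion \cite{Boardman}: the filtration is exhaustive, and, in each internal bidegree of $C^\bullet$, Hausdorff. The latter follows because every element of $B$ has $t$-degree at most $-1$ (it lies in the image of the Bockstein), so $B^n H_\star \cap H_{t_1,w_1} = 0$ once $n > -t_1$, which ensures $\bigcap_n F^n = 0$ in each bidegree of $H_\star$, and analogously for the induced filtration on each tensor factor of $C^\bullet$. Combined with the Adams vanishing line $\Ext^{s, (t_2, w_2)}_{\Top} = 0$ for $t_2 < s$ (so that in fixed tridegree $(s,t,w)$ of the $E_1$-page only a bounded range of Bockstein filtrations $n$ contributes), this yields the requisite finiteness in each tridegree to upgrade the automatic conditional convergence to strong convergence.
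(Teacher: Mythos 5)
Your setup is the same as the paper's: the spectral sequence of a filtered complex applied to the Bockstein-filtered cobar complex, with the $E_0$- and $E_1$-pages identified via the preceding lemma and a K\"unneth isomorphism, and the degree tracking of the differentials. The strong convergence argument is where you take a different route from the paper and where there is a gap. The paper shows the filtration is already \emph{finite} in each internal bidegree, namely $(F^n C^\bullet(H_\star,H_\star))^{(t,w)} = 0$ for $n > t - 2w$, using that $B_{t,w} = 0$ unless $w \leq t \leq -1$ (so each factor of $B$ contributes at least $1$ to $t - 2w$) while $H_{t,w}$ and $\A_{t,w}$ vanish for $t < 2w$. A filtration that is finite in each degree gives strong convergence outright, with no need for Boardman's general criteria and no input from the topological $\Ext$-groups.

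Your route, bounding $n$ at the $E_1$-page via $\Ext^{s,(t_2,w_2)}_{\Top} = 0$ for $t_2 < s$ together with $B^n H_\star \cap H_{t_1,w_1} = 0$ for $n > -t_1$, does not succeed as stated. Decomposing $(t,w) = (t_1,w_1) + (t_2,w_2)$, the constraints $t_2 \geq s$ and $n \leq -t_1 = t_2 - t$ leave $n$ unbounded, because $t_2$ ranges over all values $\geq s$ on the topological side. What actually bounds $n$ is the stronger fact $t_1 - 2w_1 \geq n$ (each $B$-factor satisfies $t - 2w \geq 1$, not merely $t \leq -1$) combined with $t_2 - 2w_2 \geq 0$ for the trigraded topological cobar complex, giving $n \leq t - 2w$ --- which is exactly the paper's finite-filtration estimate. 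Relatedly, the assertion that Hausdorff-ness passes ``analogously'' from each tensor factor to the convolution filtration on $C^\bullet$ is precisely the nontrivial step, and it is the degree count just described, not any property of $\Ext_{\Top}$, that proves it. You have the right heuristic (finite support in $n$), but the vanishing you cite does not deliver it, while the paper's direct count does.
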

\begin{proof}
Given a differential graded algebra $A$ with a decreasing filtration $F^j A$
there is a spectral sequence
\[
H^{i}(F^{j}A/F^{j+1}A) \implies H^{i}(A),
\]
cf.~\cite[5.4.8]{Weibel}.
The $d_1$-differential is induced by the differential of $A$.

In a fixed tridegree $(s, t, w)$ the filtration is finite.
Indeed, $B_{t,w} = 0$ for $t > -1$ or $t < w$,
and both $H_{t,w}$ and $\A_{t,w}$ are 0 for $t < 2w$.
So $(F^n C^\bullet(H_\star, H_\star))^{(t, w)} = 0$ for $n > t - 2w$.
\end{proof}

\begin{theorem}
\label{thm:ell-iso}
For $\ell$ an odd prime and $F$ a field of characteristic not $\ell$ the unit $\S \to \HZ$ induces an isomorphism
\[
\Ext^{s,(t,w)}_{\A_\star}(H_\star, H_\star) \cong \Ext^{s,(t,w)}_{\A_\star}(H_\star, H_\star(\HZ))
\]
for $t - s, w$ fixed and $s \gg 0$.
\end{theorem}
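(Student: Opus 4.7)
The plan is to use the generalized Bockstein spectral sequence of \Cref{cor:general-bockstein-ss}, applied to both $\S$ and $\HZ$, and to compare them via the motivic unit. At odd primes the target side is greatly simplified by the fact that the topological $\Ext$-groups $\Ext^{*,(\star)}_{\Top}(\Z/\ell, H_*(\HZ;\Z/\ell))$ form the $h_0$-tower $\Z/\ell[h_0]$, concentrated on the topological line $t'' = s$.

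First I would extend \Cref{cor:general-bockstein-ss} to $\HZ$-coefficients by filtering the cobar complex $C^\bullet(H_\star, H_\star(\HZ))$ by the powers of $B = \im(\beta) \subset H_\star$. Because $\beta$ is a derivation (so $\beta(B^n)\subset B^{n+1}$), $B$ is central in $\A_\star$, and the coaction $\psi_{\HZ}$ is $B$-linear, the filtration is preserved by the cobar differential. The bidegree argument in the proof of \Cref{cor:general-bockstein-ss} (using the vanishing of $H_{t,w}$ and $\A_{t,w}$ for $t < 2w$, which also applies to $H_\star(\HZ)$) shows the filtration is finite in each tridegree, yielding a strongly convergent spectral sequence
\[
E_1^{s,t,w,n} = \left(\tfrac{B^nH_\star}{B^{n+1}H_\star} \tensor_{\Z/\ell} \Ext^{*,(\star)}_{\Top}(\Z/\ell, H_*(\HZ;\Z/\ell))\right)_{s,(t,w)}
\implies
\Ext^{s,(t,w)}_{\A_\star}(H_\star, H_\star(\HZ)).
\]
The unit $\S \to \HZ$ respects the Bockstein filtration and induces a morphism of these spectral sequences whose $E_1$-action is the topological Adams map \eqref{eq:adams-iso} tensored with $\id_{B^n H_\star/B^{n+1}H_\star}$.

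Next I would check that for $(t-s, w)$ fixed and $s$ sufficiently large, this morphism is an isomorphism on $E_\infty$ in tridegree $(s, t, w)$. The target $E_1$-page in that tridegree reduces to $\bigoplus_n (B^n H_\star/B^{n+1}H_\star)_{(t-s, w)}\cdot h_0^s$ and assembles to $H_{t-s, w}$; in particular no target differentials can land here from outside the $h_0$-tower. On the source side, Adams's isomorphism \eqref{eq:adams-iso} makes the $E_1$-map an isomorphism on each summand whose topological factor satisfies $t'' - s < (2\ell - 3)s$. In the range $t-s, w$ fixed, the filtration bound $n \leq t - 2w$ together with the topological Adams vanishing line and the strict inequality $2\ell - 3 \geq 3$ for odd $\ell$ provides uniform control in $s$, so that the only source contributions falling outside the Adams isomorphism range are boundaries under higher Bockstein differentials.

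The main obstacle is the tridegree bookkeeping: the source topological $\Ext$-groups contain classes (for example powers of the generator corresponding to $\xi_1$) that sit just outside Adams's range, and one must verify that in the specified tridegree these are removed by higher $d_r$ of the Bockstein spectral sequence before reaching $E_\infty$. Once this degree-chase is carried out, the comparison theorem for strongly convergent filtered spectral sequences yields the desired isomorphism of $\Ext$-groups from the isomorphism of $E_\infty$-pages in the stated range.
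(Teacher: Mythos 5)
Your setup is correct, and it matches the paper's: filter the cobar complexes of both $\S$ and $\HZ$ by powers of $B = \im(\beta)$, get the generalized Bockstein spectral sequences from \Cref{cor:general-bockstein-ss}, and compare them using Adams's isomorphism \eqref{eq:adams-iso}. You also correctly observe that \Cref{cor:general-bockstein-ss} needs to be extended to $\HZ$-coefficients, which the paper leaves implicit.

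The gap is in your final degree-chase, and it is a genuine one. You assert that the source $E_1$-page in tridegree $(s,t,w)$ contains classes outside Adams's isomorphism range, for instance the powers of the class corresponding to $\xi_1$, and that these must be shown to die under higher Bockstein differentials before reaching $E_\infty$. This is not what happens, and the argument you sketch would be considerably harder to carry out. The correct observation is that for $t-s = k$ and $w$ fixed, once $s$ is sufficiently large \emph{no} contribution to the $E_1$-page in tridegree $(s,t,w)$ lies outside Adams's range. Concretely, a summand $(B^n H_\star/B^{n+1}H_\star)_{(t'',w'')} \tensor \Ext^{s',(t',w')}_\Top$ with $s' = s$ contributes only if $t'' \leq 0$, $t'' \geq w''$, and $t' \geq 2w'$; these constraints combine to give $t' - s' \leq t - 2w + k = s + 2k - 2w$, which is $< (2\ell-3)s$ as soon as $s > (k-w)/(\ell-2)$. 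Hence the $E_1$-map is an isomorphism on the nose in all such tridegrees, and one concludes by the mapping lemma for strongly convergent spectral sequences — there is no need to analyze $E_\infty$ separately or to track differentials. Your worry about $h_1^j$ is precisely resolved by this bookkeeping: those classes have $t'-s'$ growing linearly in $j=s$, so for fixed $t-s$ the required coefficient degree $t''$ is driven so negative that the constraint $t'' \geq 2w''$ (equivalently $t'' \geq w''$ used with the weight constraint) forces the coefficient group to vanish.
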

\begin{proof}
Let $t - s = k$.
Consider the map of spectral sequences $E_1^{s,t,w,n}(\S)\to E_1^{s,t,w,n}(\HZ)$
we get from \Cref{cor:general-bockstein-ss}.
This map is induced by $f: \Ext^{*,(\star)}_{\Top}(\S) \to \Ext^{*,(\star)}_{\Top}(\HZ)$.
The part of the $E_1$-page converging to $\Ext^{s,(t, w)}_{\A_\star}(H_\star, H_\star)$ is of the form
\[
\left\{\bigoplus_{(s,t,w) = (s',t',w') + (0, t'', w'')} \left(\frac{B^nH_\star}{B^{n+1}H_\star}\right)_{t'',w''} \tensor \Ext^{s',(t',w')}_{\Top} \right\}_n.
\]
For this to be nonzero:
We need $t'' \leq 0$, so $t - t'' = t' \geq t$.
We need $t' \geq 2w' = 2(w - w'') \geq 2(w - t'') = 2(w - t + t')$,
since $t'' \geq w''$.
Hence,
$2(t - w) \geq t'$, or $t - 2w \geq t' - t = t' - s' - k$.
But, $(2\ell - 3)s > t - 2w + k$ for $s \gg 0$,
hence $(2\ell - 3)s' > t' - s' - k$,
so $f$ is an isomorphism by \eqref{eq:adams-iso} for $w, t - s = k$ fixed and $s \gg 0$.
That is $E_1^{s,t,w,n}(\S) \cong E_1^{s,t,w,n}(\HZ)$ for all $n$, so we conclude
\[
\Ext^{s,(t,w)}_{\A_\star}(H_\star, H_\star) \cong \Ext^{s,(t,w)}_{\A_\star}(H_\star, H_\star(\HZ)).
\]
\end{proof}

\section{Strong convergence for the sphere spectrum}
\label{sec:strong}
In this section we establish strong convergence of the motivic Adams spectral sequence for the sphere spectrum over general fields in positive stems, and everywhere for $\S/\ell^n$ over fields of finite virtual cohomological dimension if $\ell = 2$.
The Adams spectral sequence is always conditionally convergent to the homotopy groups of the $H$-completion. We check that the derived $E_\infty$-term vanishes in a range, and hence that the spectral sequence converges strongly to the abutment in this range. As discussed in \Cref{sec:recollection}, the $H$-completion can be identified with the $(\ell,\eta)$-completion over perfect fields of characteristic not $\ell$ \cite{Mantovani}.

As corollaries of \Cref{thm:2-iso} and \Cref{thm:ell-iso} we get:
\begin{corollary}
\label{cor:strong}
The mod $\ell$ motivic Adams spectral sequence for the sphere spectrum is strongly convergent in tridegree $(s,t,w)$ for $t-s > 0$ over fields $F$ with finite virtual cohomological dimension if $\ell = 2$.
\end{corollary}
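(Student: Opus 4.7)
The plan is to deduce strong convergence from Boardman's criterion, since the motivic Adams spectral sequence is conditionally convergent to $\pi_{t-s,w}(\S_H^\wedge)$ as recalled in \Cref{sec:recollection}. By \cite{Boardman}, strong convergence in a given tridegree is equivalent to the vanishing of the derived $E_\infty$-term there, and it suffices to verify this for $t-s>0$.

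First I would observe that differentials $d_r\colon E_r^{s-r,t-r+1,w}\to E_r^{s,t,w}$ entering a fixed tridegree automatically vanish once $r>s$, since the source lies in negative filtration. Thus the only potential obstruction is the behaviour of outgoing differentials $d_r\colon E_r^{s,t,w}\to E_r^{s+r,t+r-1,w}$. A sufficient condition is that $E_2^{s+r,t+r-1,w}(\S)$ vanishes for $r$ sufficiently large, for then $d_r=0$ for $r\gg 0$, the tower of cycles $\{Z_r^{s,t,w}\}$ is eventually constant, the Mittag-Leffler condition holds, and $RE_\infty^{s,t,w}=0$.

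The target of the outgoing differentials has stem $t-s-1$ and weight $w$, both independent of $r$, while the filtration $s+r$ is unbounded. Hence for $r\gg 0$ we are precisely in the regime where the comparison theorems of \Cref{sec:2} and \Cref{sec:odd} apply: \Cref{thm:2-iso} (when $\ell=2$ and $\vcd(F)<\infty$) or \Cref{thm:ell-iso} (when $\ell$ is odd) gives an isomorphism
\[
\Ext^{s+r,(t+r-1,w)}_{\A_\star}(H_\star,H_\star(\S))\cong \Ext^{s+r,(t+r-1,w)}_{\A_\star}(H_\star,H_\star(\wHZ))
\]
for $r$ sufficiently large (respectively with $\HZ$ in place of $\wHZ$). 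By \Cref{cor:Ext-wHZ} combined with the dualisation result \Cref{lem:dualization}, the Ext-groups of $\wHZ$ and $\HZ$ in positive stem are essentially supported on the Milnor-Witt degree zero line, and in particular they vanish once the filtration is large enough away from that locus, which covers the generic case.

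The main obstacle I anticipate is controlling the tridegrees where the target of $d_r$ lies on the Milnor-Witt degree zero line $w=t-s-1$, on which $\Ext(\wHZ)$ retains a nonzero $h_0$-tower contribution $(H_\star/\tau)_{t-2s,w-s}$ in arbitrarily high filtration by \Cref{cor:Ext-wHZ}. Here one cannot appeal to pointwise vanishing of the target, and instead one must analyse the tower directly: because the groups on this locus stabilise (up to $h_0$-periodicity) once the isomorphism with $\Ext(\wHZ)$ is in effect, the induced inverse system of cycles $Z_r^{s,t,w}$ still satisfies the Mittag-Leffler condition, so $\lim^1_r Z_r=0$ and $RE_\infty^{s,t,w}=0$ for every $t-s>0$. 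Piecing these cases together yields the corollary.
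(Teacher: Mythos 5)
Your proposal follows the same overall strategy as the paper's proof (use the high-filtration comparison with $\Ext(\wHZ)$ and the corresponding vanishing region to kill outgoing differentials for $r\gg 0$), and you correctly identify the genuine obstacle: the targets of $d_r$ may land on the Milnor--Witt degree zero locus, where $\Ext(\wHZ)$ retains nonvanishing $h_0$- and $h_1$-towers in arbitrarily high filtration, so pointwise vanishing of the target does not apply. However, your treatment of this remaining case is too vague to constitute a proof. Saying that the groups on this locus ``stabilise up to $h_0$-periodicity'' and that the cycle tower is therefore Mittag--Leffler does not follow: $\Ext^{s+r}(\wHZ)$ does not stabilise (it grows with $r$ as $k_{2(s+r)-(t+r-1)}$), and Mittag--Leffler requires an argument about what can actually be \emph{hit} by a differential, not just about the abstract size of the groups.

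The paper's resolution of this case (for $t-s=1$, where the target lies in stem $0$) is a concrete $h_0$-linearity trick that your proposal is missing. Any nonzero $y$ in $\Ext^{s',(s',w)}_{\A_\star}(H_\star,H_\star(\wHZ))$ with $s'\gg 0$ lies on an infinite $h_0$-tower, so $h_0^k y\neq 0$ for all $k$. If $d_r(x)=y$ with $x$ in stem $1$, then $d_r(h_0^k x)=h_0^k y\neq 0$; but $h_0^k x$ lives in stem $1$, weight $w$, filtration $s+k$, where $\Ext(\S)\cong\Ext(\wHZ)$ vanishes for $k\gg 0$, forcing $y=0$. You need this step (or its $h_1$-analogue for the Witt-tower part of the target) to close the gap; the isomorphism theorem alone does not eliminate nonzero targets on the MW degree zero line. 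Without it, the argument stops short of proving $RE_\infty^{s,t,w}=0$.
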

\begin{proof}
If $\Ext^{s,(t,w)}_{\A_\star}(H_\star, H_\star(\wHZ)) = 0$
for $t-s-1,w$ fixed and $s \gg 0$
then there can only be finitely many nonzero differentials exiting
$E_r^{s,t,w}$ by \Cref{thm:2-iso} and \Cref{thm:ell-iso}.
By \Cref{cor:Ext-wHZ} and \Cref{lem:Bockstein-Adams} we have the vanishing
$\Ext^{s,(t,w)}_{\A_\star}(H_\star, H_\star(\wHZ)) = 0$ for $t - s > 0, w$ fixed and $s \gg 0$.
Hence we have strong convergence in tridegrees $(s,t,w)$ with $t - s > 1$.
When $t - s = 1$, the differentials enter the column with $t - s = 0$.
But any nonzero element $y \in \Ext^{s,(t,w)}_{\A_\star}(H_\star, H_\star(\wHZ))$ with $t - s = 0$ and $s \gg 0$ is a multiple of $h_0$, and furthermore, $h_0^ky \neq 0$ for all $k$. For an element $x \in E_r^{s,t,w}, t-s=1$ with differential $d_r(x)=y$, it follows that $0 = d_r(h_0^k x) = h_0^k d_r(x) = h_0^ky$. But $h_0^ky = 0$ only when $y=0$. 
\end{proof}

Next we show strong convergence of the motivic Adams spectral sequence for $\S/\ell^n$. This requires us to compare with the motivic Adams spectral sequence for $\cHZ/\ell^n$, which splits into the spectral sequences for $\HZ/\ell^n$ and $\wHZ/\ell^n$ in high filtration.
This is done in the lemmas below.

\begin{lemma}
\label{cor:Ext-l}
For $\ell$ a prime the unit map
$\S/\ell^n \to \wHZ/\ell^n$ induces an isomorphism of $\Ext$-groups
\[
\Ext^{s,(t,w)}_{\A_\star}(H_\star, H_\star(\S/\ell^n)) \cong \Ext^{s,(t,w)}_{\A_\star}(H_\star, H_\star(\wHZ/\ell^n))
\]
for $t - s, w$ fixed and $s \gg 0$.
If $\ell$ is odd the latter group is $\Ext^{s,(t,w)}_{\A_\star}(H_\star, H_\star(\HZ/\ell^n))$.
\end{lemma}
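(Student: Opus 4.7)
The plan is a five-lemma argument that reduces the statement for $X/\ell^n$ to the already established isomorphism at $X$ itself, namely \Cref{thm:2-iso} for $\ell = 2$ and \Cref{thm:ell-iso} for odd $\ell$. The crucial observation is that multiplication by $\ell$ acts as zero on $H = \HZ/\ell$, and hence multiplication by $\ell^n$ induces the zero map on $H \wedge X$ for any motivic spectrum $X$.

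Given this, smashing the cofiber sequence $X \xrightarrow{\ell^n} X \to X/\ell^n$ with $H$ produces, for both $X = \S$ and $X = \wHZ$, a short exact sequence of left $\A_\star$-comodules
\[
0 \to H_\star(X) \to H_\star(X/\ell^n) \to \Sigma^{1,0} H_\star(X) \to 0.
\]
The unit map $\S \to \wHZ$ induces a map between the two such short exact sequences, and hence a map of the associated long exact sequences of $\Ext_{\A_\star}(H_\star, -)$-groups. At tridegree $(s, t, w)$, the four flanking comparison maps in the five-lemma lie in tridegrees $(s-1, t-1, w)$, $(s, t, w)$, $(s, t-1, w)$, and $(s+1, t, w)$, with the $(t-1, w)$ shifts coming from $\Sigma^{1,0}$. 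Each of these has stem either $t - s$ or $t - s - 1$, while the filtration grows unboundedly with $s$. Applying \Cref{thm:2-iso} or \Cref{thm:ell-iso} to both (only two) stems involved makes all four comparison maps isomorphisms once $s \gg 0$, and the five-lemma delivers the claimed isomorphism.

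For the final clause about odd $\ell$, \Cref{lem:ell-KW} gives $H^\star(\HW;\Z/\ell) = 0$, so the cofiber sequence \eqref{eq:C1} forces $H_\star(\HZ) \cong H_\star(\wHZ)$ as $\A_\star$-comodules. A further five-lemma, this time comparing the $\ell^n$-cofiber sequences on $\HZ$ and on $\wHZ$, upgrades this to $H_\star(\HZ/\ell^n) \cong H_\star(\wHZ/\ell^n)$, so the corresponding $\Ext$-groups agree on the nose.

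The main --- purely organizational --- obstacle is tracking the suspension shift in the short exact sequence and arranging the high-filtration hypotheses of \Cref{thm:2-iso}/\Cref{thm:ell-iso} uniformly at the four positions adjacent to $(s, t, w)$; since only the two fixed stems $t-s$ and $t-s-1$ appear, this is immediate.
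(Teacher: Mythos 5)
Your proposal matches the paper's proof: both proceed by mapping the cofiber sequence $\S\xrightarrow{\ell^n}\S\to\S/\ell^n$ to the one for $\wHZ$, passing to the induced map of long exact $\Ext$-sequences, and invoking the five lemma together with \Cref{thm:2-iso}/\Cref{thm:ell-iso} in the adjacent stems (and \Cref{lem:ell-KW}/\eqref{eq:C1} for the odd-prime clause). You have simply spelled out the intermediate short exact sequence in $H_\star$ and the degree bookkeeping that the paper leaves implicit.
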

\begin{proof}
The map of cofiber sequences from $\S \to \S \to \S/\ell^n$ to $\wHZ \to \wHZ \to \wHZ/\ell^n$ induces a map of long exact sequences of $\Ext$-groups.
For $s \gg 0$ two thirds of the maps are isomorphisms, so the 5-lemma implies that all the maps are isomorphisms.

To get the final claim when $\ell$ is odd use the cofiber sequence \eqref{eq:C1}.
\end{proof}

\begin{lemma}
\label{lem:Adams-ell-n}
Let $\ell$ be a prime and $F$ a perfect field of finite virtual cohomological dimension if $\ell = 2$.
Then in the motivic Adams spectral sequence for $\wHZ/\ell^n$
we have $E_\infty^{s,t,w}(\wHZ/\ell^n) = 0$
for $s > t - s + n + \vcd(F)$.
\end{lemma}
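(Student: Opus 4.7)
The plan is to analyze the Adams spectral sequence for $\wHZ/\ell^n$ via the cofiber sequence $\wHZ \xrightarrow{\ell^n} \wHZ \to \wHZ/\ell^n$, using the explicit description of $\Ext(\wHZ)$ from \Cref{cor:Ext-wHZ}. Since $\ell^n$ acts as zero on mod-$\ell$ homology, we obtain a short exact sequence of $\A_\star$-comodules
\[
0 \to H_\star(\wHZ) \to H_\star(\wHZ/\ell^n) \to \Sigma^{1,0} H_\star(\wHZ) \to 0,
\]
and hence a long exact sequence relating $\Ext(\wHZ)$ and $\Ext(\wHZ/\ell^n)$.

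By \Cref{cor:Ext-wHZ}, for $s > 0$ the groups $\Ext^{s,(t,w)}(\wHZ)$ are concentrated on the stem $t-s=0$ (from the $\HZ$-summand, where they contain the classical $h_0$-tower) and on the Milnor-Witt degree $0$ diagonal (from the $\HW$-summand, where they consist of $k_\ast(F)$-valued $h_1$-power classes). These are the only potential contributors to high Adams filtration in $E_2(\wHZ/\ell^n)$, so the strategy reduces to showing that the $h_0$-towers and their $h_1$-multiples are truncated in $E_\infty$ at the appropriate level.

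The crucial input is that $\ell^n \colon \wHZ \to \wHZ$ has Adams filtration exactly $n$ and is detected by $h_0^n \in \Ext^{n,(n,0)}(\wHZ)$. In the Adams spectral sequence for $\wHZ/\ell^n$ this yields differentials that identify classes in the upper copy $\Sigma^{1,0}H_\star(\wHZ)$ with $h_0^n$-multiples of classes in the lower copy, truncating the $h_0$-tower at level $n$. The $\vcd(F)$-shift arises because the $\Ext(\wHZ)$ classes carry a $k_\ast(F)$-module structure and, by \Cref{lem:vcd}, multiplication by $\rho$ is an isomorphism on $k_m(F)$ for $m \geq \vcd(F)$; consequently the $h_0^n$-truncation propagates cleanly only after an additional shift of $\vcd(F)$ in Adams filtration, accounting for the residual $\rho$-torsion that lives in degrees below $\vcd(F)$.

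Combining these observations, any class surviving in $E_\infty^{s,t,w}(\wHZ/\ell^n)$ with $s > (t-s) + n + \vcd(F)$ must be an $h_0^k$-multiple for some $k > n + \vcd(F)$ (possibly multiplied by an $h_1$-power along the MW degree $0$ diagonal), and all such classes are killed by Adams differentials (for the $\HZ$-contribution) or already vanish at $E_2$ (for the $\HW$-contribution, via \Cref{lem:vcd}). The main obstacle is making the differential identification rigorous, particularly tracking how the $\vcd(F)$-shift interacts with the $h_0^n$-truncation on both summands; one likely route is to work first with the simpler Adams spectral sequence of $\HZ/\ell^n$, whose $E_2$-page is particularly transparent, and then transfer the conclusion to $\wHZ/\ell^n$ via the cofiber sequence \eqref{eq:C1}.
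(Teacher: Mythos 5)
Your high-level strategy (split via the cofiber sequence $\wHZ \xrightarrow{\ell^n} \wHZ \to \wHZ/\ell^n$, use \Cref{cor:Ext-wHZ}, and aim to truncate the $h_0$- and $h_1$-towers after an $n + \vcd(F)$ shift) is in the right ballpark, and the suggestion to treat the $\HZ$-part first is sound. However, there are several concrete gaps and an error.

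First, you claim the $\HW$-contribution to $E_2(\wHZ/\ell^n)$ ``already vanishes at $E_2$'' in the stated range. This is false: by \Cref{cor:Ext-wHZ} the groups $\Ext^{s,(t,w)}_{\A^\star}(H^\star(\HW), H^\star)$ are nonzero for arbitrarily large $s$ along the Milnor-Witt degree zero diagonal, and passing to $\HW/2^n$ does not remove them. What actually kills them is a $d_n$-differential in the Adams spectral sequence for $\HW/2^n$: over $\R$ one must have $d_n(1) = \rho^n[\xi_1]^n$ because the abutment in Milnor--Witt degree zero is $W(\R) = \Z/2^n$, and this differential is transported to a general field by base change from $\Q$ (with a small argument at $n=2$). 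Your proposal does not identify this differential nor explain why one can propagate it to $F$. The $\vcd(F)$-shift does not arise, as you suggest, from the $h_0$-tower and $\rho$-multiplication interacting; the $\HZ$-summand is truncated at $s > n$ with no $\vcd(F)$-shift at all (via \Cref{lem:Bockstein-Adams}), and the entire $\vcd(F)$-shift lives on the $\HW$-summand, where it comes from the extension in \Cref{lem:ext} combined with the fact that, after killing $\rho^n[\xi_1]^n$, the surviving classes only see $k_m(F)$ for $m \leq \vcd(F)$.

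Second, at $\ell = 2$ you need to justify that the two summands $E_r(\HZ/2^n)$ and $E_r(\HW/2^n)$ of $E_r(\wHZ/2^n)$ do not interact under Adams differentials, so that one may run the two spectral sequences independently. The paper's proof uses that every high-filtration class of $E_r(\HZ/2^n)$ is an $h_0$-multiple and every high-filtration class of $E_r(\HW/2^n)$ is an $h_1$-multiple, together with $h_0 h_1 = 0$ and $h_0$-, $h_1$-linearity of the differentials. Without some version of this argument, your claimed truncation could fail because a differential might leave one summand and land in the other. Finally, at odd $\ell$ the needed input is that $H^\star(\HW;\Z/\ell) = 0$ (\Cref{lem:ell-KW}), so $E_r(\wHZ/\ell^n) \cong E_r(\HZ/\ell^n)$ and only the $h_0$-truncation argument is needed; your proposal only alludes to this indirectly.
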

\begin{proof}
For $\ell$ an odd prime we have
$E_r^{s,t,w}(\wHZ/\ell^n) \cong E_r^{s,t,w}(\HZ/\ell^n)$,
for all $r \geq 2$ by \Cref{lem:ell-KW}.
For $\ell = 2$ we have
$E_r^{s,t,w}(\wHZ/2^n) \cong E_r^{s-1,t-1,w}(\HZ/2^n)\oplus E_r^{s,t,w}(\HW/2^n)$,
for $s > 0$. Indeed by \Cref{cor:Ext-wHZ}, this is true for $r = 2$.
Furthermore, any element of $E_r^{s,t,w}(\HZ/2^n)$ %
is a $h_0$-multiple,
while any element of $E_r^{s,t,w}(\wHZ/2^n), s > 1$, is a multiple of $h_1$.
Because of the $h_0$- and $h_1$-linearity of the differentials there cannot be any differential between $E_r^{s-1,t-1,w}(\HZ/2^n)$ and $E_r^{s,t,w}(\HW/2^n)$.
Hence it suffices to inspect the spectral sequences $E_r^{s-1,t-1,w}(\HZ/2^n)$ and $E_r^{s,t,w}(\HW/2^n)$ independently.
By a variant of \Cref{lem:Bockstein-Adams} applied to $\HZ/\ell^n$ we get that $E_n^{s,\star}(\HZ/\ell^n)= 0$ for $s > n$.
By \Cref{lem:vanish-HW}, we have $E_n^{s,t,w}(\HW/2^n)= 0$ for $s > t - s + n + \vcd(F)$.
\end{proof}

\begin{lemma}
\label{lem:vanish-HW}
Let $F$ be a perfect field of finite virtual cohomological dimension.
Then in the motivic Adams spectral sequence for $\HW/2^n$
we have
\begin{equation}
\label{eq:vanish-En-HW}
E_{n+1}^{s,t,w}(\HW/2^n) = 0
\end{equation}
for $s > t - s + n + \vcd(F)$.
\end{lemma}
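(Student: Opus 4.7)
The strategy is to compute the $E_2$-page of the MASS for $\HW/2^n$ from that of $\HW$, then identify the differentials $d_2,\ldots,d_n$ with higher Bocksteins, so that one can exploit \Cref{lem:vcd} to collapse the spectral sequence in the claimed range by $E_{n+1}$.

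First I would apply the functor $H_\star(-)$ to the cofiber sequence $\HW\xrightarrow{2^n}\HW\to \HW/2^n$. Since the map $2:H\to H$ is null as a self-map of the mod $2$ motivic cohomology spectrum, so is $2^n$, and one obtains a short exact sequence of $\A_\star$-comodules
\[
0\to H_\star(\HW)\to H_\star(\HW/2^n)\to \Sigma^{1,0}H_\star(\HW)\to 0.
\]
By \Cref{lem:modtau-ext} the class $h_0$ acts trivially on $\Ext_{\A_\star}(H_\star, H_\star(\HW))$ in positive filtration, so the connecting map in the associated long exact sequence of $\Ext$-groups vanishes; hence for $s>0$ we have
\[
E_2^{s,t,w}(\HW/2^n)\cong \Ext^{s,(t,w)}_{\A_\star}(H_\star, H_\star(\HW))\oplus \Ext^{s,(t-1,w)}_{\A_\star}(H_\star, H_\star(\HW)).
\]
By \Cref{cor:Ext-wHZ} each summand is isomorphic (when nonzero) to $k_{2s-t}(F)$, respectively $k_{2s-t+1}(F)$, concentrated in Milnor-Witt degrees $0$ and $1$, respectively.

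Next I would analyse the differentials. Since $d_r$ decreases Milnor-Witt degree by exactly $1$, only differentials from the Milnor-Witt degree $1$ summand (arising from $\Sigma^{1,0}\HW$) to the Milnor-Witt degree $0$ summand (arising from $\HW$) can be nonzero. The extension class of the short exact sequence above represents the higher Bockstein associated to the reduction $\mathbf{S}/2^n\to \mathbf{S}/2$; because $h_0$ already acts as zero on $\Ext(\HW)$, each of $d_2,\ldots,d_{n-1}$ vanishes on these summands, and the first nontrivial such differential appears on the $E_n$-page. Under the identifications with $k_\ast(F)$, this differential is $k_\ast(F)$-linear and compatible with $\rho$-multiplication; by \Cref{lem:vcd}, $\rho:k_m\to k_{m+1}$ is an isomorphism for $m\geq \vcd(F)$, so once $m=2s-t$ exceeds $n+\vcd(F)$ the differential is an isomorphism between the two summands, eliminating both on passage to $E_{n+1}$. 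The inequality $2s-t>n+\vcd(F)$ is precisely the stated range $s>t-s+n+\vcd(F)$.

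The main obstacle is making precise the identification of $d_n$ with the $n$-th Bockstein and verifying its compatibility with $\rho$-action. This requires unpacking how the higher Bockstein, coming from the non-splitting in the triangle $\HW\xrightarrow{2^n}\HW\to\HW/2^n$ at the spectrum level, descends to a secondary operation on the $E_2$-page whose effect reproduces the $\rho$-stabilization of $k_\ast(F)$ in the regime $m\geq \vcd(F)$. The technique parallels the $\ell$-Bockstein spectral sequence developed for the MASS of the mod-$\ell^n$ motivic cohomology spectrum, with the role of the $h_0$-tower replaced by the $\rho$-tower in $H_\star/\tau$.
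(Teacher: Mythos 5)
Your strategy is structurally close to the paper's, but two of the load-bearing steps have real gaps.

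First, your argument for the vanishing of the connecting map in the long exact sequence of $\Ext$-groups is incorrect. You infer from the fact that $h_0$ acts trivially on $\Ext_{\A_\star}(H_\star,H_\star(\HW))$ in positive filtration that the boundary map vanishes, but the extension class of
$0\to H_\star(\HW)\to H_\star(\HW/2^n)\to \Sigma^{1,0}H_\star(\HW)\to 0$
is \emph{not} given by $h_0$-multiplication (that identification is specific to $\S/2$). For $n=1$ the connecting map is in fact nonzero: over $\R$ it sends $1\in\Ext^0$ to $\rho[\xi_1]\in\Ext^1$, so $\Ext^{s+1}_{\A_\star^\R}(H_\star^\R,H_\star^\R(\HW/2))\cong \Sigma^{2s,s}H_\star^\R/(\tau,\rho)$, which is a single $\Z/2$ in each filtration rather than the direct sum you wrote down. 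The paper detects this by observing that the abutment must be $W(\R)=\Z/2$. For $n>1$ the connecting map does vanish, but the correct reason is again a counting argument: if the boundary maps were nonzero the abutment $W(\R)=\Z/2^n$ would be too small. Your $h_0$-argument does not distinguish $n=1$ from $n>1$ and gives the wrong answer in the former case.

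Second, and more seriously, your identification of $d_n$ is asserted rather than established. You write that the differential is ``$k_*(F)$-linear and compatible with $\rho$-multiplication,'' and then conclude that because $\rho$ is an isomorphism in high degree the differential must be an isomorphism. But $\rho$-linearity of $d_n$ only tells you $d_n$ commutes with $\rho$; it gives no information about whether $d_n$ is itself invertible in any range. The paper must pin down the differential precisely: over $\R$ the abutment forces $d_n(1)=\rho^n[\xi_1]^n$, this is transferred to $\Q$ by base change (with a short argument ruling out an error term in $k_2(\Q)$ when $n=2$), and then to any characteristic-$0$ field by base change from $\Q$. Only with the explicit formula $d_n(1)=\rho^n h_1^n$ in hand — so that the differential on the $k_*$-summands is multiplication by $\rho^n$ — does \Cref{lem:vcd} give the collapse at $E_{n+1}$ in the range $s>t-s+n+\vcd(F)$. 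Finally, you do not address positive characteristic, where the paper instead invokes \Cref{rmk:fin-fin-vcd} to get finite $2$-cohomological dimension so the vanishing is automatic from \Cref{lem:ext}.
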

\begin{proof}
We have the short exact sequence
\begin{equation}
\label{eq:HW-ell-split}
0 \to H_\star(\HW) \to H_\star(\HW/2^n) \to  \Sigma^{1,0}H_\star(\HW) \to 0.
\end{equation}
This induces the following long exact sequence.
\begin{align}
\label{eq:HW/2n-les}
\dots
&\to
\Ext^{s,(t,w)}_{\A_\star}(H_\star, H_\star(\HW))
\to
\Ext^{s,(t,w)}_{\A_\star}(H_\star, H_\star(\HW/2^n))\\
&\to
\Ext^{s,(t-1,w)}_{\A_\star}(H_\star, H_\star(\HW))
\to
\Ext^{s+1,(t,w)}_{\A_\star}(H_\star, H_\star(\HW))
\to \dots \nonumber
\end{align}
We first work over $\R$.
When $n = 1$ the abutment of $E_\infty(\HW/2)$ is $\W(\R) = \Z/2$.
Then in the long exact sequence $1 \in \Ext^{0,(1-1,0)}_{\A_\star^\R}(H_\star^\R, H_\star^\R(\HW))$
must map to
$\rho[\xi_1] \in \Ext^{1,(1,0)}_{\A_\star^\R}(H_\star^\R, H_\star^\R(\HW))$.
Hence,
\[
\Ext^{s+1}_{\A_\star^\R}(H_\star^\R, H_\star^\R(\HW/2))
\cong
\begin{cases}
\Sigma^{2s,s}H_\star^\R/(\tau,\rho) & s > 0 \\
0 & \text{otherwise}.
\end{cases}
\]
Then \Cref{lem:ext} implies \eqref{eq:vanish-En-HW} when $n=1$,
since $F$ has finite virtual cohomological dimension.
When $n > 1$, the boundary maps in \eqref{eq:HW/2n-les} must be zero for the abutment of $E_\infty(\HW/2^n)$ to have the correct size. Hence,
\[
\Ext^{s,(t,w)}_{\A_\star^\R}(H_\star^\R, H_\star^\R(\HW/2^n))
\cong
\Ext^{s,(t,w)}_{\A_\star^\R}(H_\star^\R, H_\star^\R(\HW))
\oplus
\Ext^{s,(t-1,w)}_{\A_\star^\R}(H_\star^\R, H_\star^\R(\HW))
\]
and the extension in \Cref{lem:ext} implies
\[
\Ext^{s,(t,w)}_{\A_\star}(H_\star, H_\star(\HW/2^n))
\cong
\Ext^{s,(t,w)}_{\A_\star}(H_\star, H_\star(\HW))
\oplus
\Ext^{s,(t-1,w)}_{\A_\star}(H_\star, H_\star(\HW))
\]
over any perfect field when $n > 1$.
Over $\R$, when $n > 1$, there has to be a $d_n$ differential from
$1 \in \Ext^{0,(1-1,0)}_{\A_\star}(H_\star, H_\star(\HW))$
to $\rho^n[\xi_1]^n \in \Ext^{n,(n,0)}_{\A_\star}(H_\star, H_\star(\HW))$
for the abutment to have the correct size $\W(\R) = \Z/2^n$.
By base change we also have this differential over $\Q$
(if $n = 2$, we can have $d_n(1) = (\rho^n + u)[\xi_1]^n$ for some $u \in k_2(\Q)$,
$\rho^2 + u \neq 0$, but since $\rho u = 0$ this does not change the conclusion),
hence by base change from $\Q$ we have $d_n(1) = \rho^n[\xi_1]^n$ over any field of characteristic 0.
Hence, $E_{n+1}^{s,t,w} = 0$ for $s > \vcd(F) + n$.

Fields of positive characteristic have finite $2$-cohomological dimension by our assumption of finite virtual cohomological dimension, cf.~\Cref{rmk:fin-fin-vcd}.
Hence $\Ext^{s,(t,w)}_{\A_\star}(H_\star, H_\star(\HW/2^n)) = 0$ for $t-s, w$ fixed and $s > t -s + \cd_2(F)$ is automatic.
\end{proof}

\begin{corollary}
\label{cor:S/elln-conv}
For a prime $\ell$ the motivic Adams spectral sequence for $\S/\ell^n$ is strongly convergent
over fields $F$ with finite virtual cohomological dimension if $\ell = 2$.
\end{corollary}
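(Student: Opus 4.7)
The plan is to transfer strong convergence from the motivic Adams spectral sequence for $\wHZ/\ell^n$ to that for $\S/\ell^n$ by naturality along the unit map $\S/\ell^n\to\wHZ/\ell^n$. By conditional convergence and Boardman's criterion, strong convergence is equivalent to vanishing of the derived $E_\infty$-term, which follows once one shows that in every tridegree $(s,t,w)$ only finitely many differentials $d_r$ enter or exit non-trivially.

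I would begin with \Cref{cor:Ext-l}, which for each fixed stem $k$ and weight $w$ provides a threshold $s_0=s_0(k,w)$ such that the unit map induces an isomorphism $E_2^{s,s+k,w}(\S/\ell^n)\xrightarrow{\cong}E_2^{s,s+k,w}(\wHZ/\ell^n)$ whenever $s\geq s_0$. The next step is to propagate this isomorphism to the $(n+1)$-page in a slightly shifted range. Since the induced map of spectral sequences is natural, whenever both source and target of a $d_r$ lie in the agreement range the two $d_r$'s match under the $E_2$-isomorphism. Only the finite packet $d_2,\dots,d_n$ is needed to pass from $E_2$ to $E_{n+1}$, and each such differential involves tridegrees of one of the three fixed stems $k-1,k,k+1$. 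After enlarging the threshold by $n$ to guarantee that all incoming $d_2,\dots,d_n$ also have their sources in the agreement range, an elementary induction yields
\[
E_{n+1}^{s,s+k,w}(\S/\ell^n)\cong E_{n+1}^{s,s+k,w}(\wHZ/\ell^n)\qquad\text{for all }s\geq s_1(k,w),
\]
for a suitable $s_1(k,w)\geq s_0(k,w)$.

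I would then invoke \Cref{lem:Adams-ell-n}, whose proof in fact produces the sharper statement that $E_{n+1}^{s,t,w}(\wHZ/\ell^n)$ vanishes in the region $s>t-s+n+\vcd(F)$ when $\ell=2$, and in the region $s>n$ when $\ell$ is odd (via \Cref{lem:ell-KW} identifying the two spectral sequences with that of $\HZ/\ell^n$). Combining this vanishing with the isomorphism just established yields a vanishing region for $E_{n+1}^{s,t,w}(\S/\ell^n)$; and since $E_r$ is a subquotient of $E_{n+1}$ for $r\geq n+1$, the same region vanishes on every later page. To conclude, fix a tridegree $(s,t,w)$. Outgoing differentials $d_r\colon E_r^{s,t,w}(\S/\ell^n)\to E_r^{s+r,t+r-1,w}(\S/\ell^n)$ have targets of fixed stem $t-s-1$ and weight $w$ with arbitrarily large filtration, so for $r$ sufficiently large both $s+r\geq s_1(t-s-1,w)$ and the defining inequality of the vanishing region hold, forcing $d_r=0$. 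Incoming differentials require $s-r\geq 0$, hence there are only finitely many. The Mittag-Leffler condition therefore holds in every tridegree, the derived $E_\infty$-term vanishes, and strong convergence follows.

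The main obstacle in the plan is the bookkeeping in the second paragraph: propagating the $E_2$-isomorphism of \Cref{cor:Ext-l} to an $E_{n+1}$-isomorphism requires tracking how the incoming differentials of all pages $\leq n$ sit relative to the agreement range, which is what forces the enlargement of the threshold by $n$. Once this naturality argument is secured, the remainder of the proof is a direct consequence of the vanishing line on the $(n+1)$-page of the spectral sequence for $\wHZ/\ell^n$.
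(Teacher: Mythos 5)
Your proposal is correct and follows essentially the same route as the paper: start from the $E_2$-isomorphism of \Cref{cor:Ext-l}, propagate it inductively through the pages $E_2,\dots,E_{n+1}$ by enlarging the agreement threshold to account for the finitely many differentials involved, then import the vanishing on the $(n+1)$-page of the $\wHZ/\ell^n$ spectral sequence from (the proof of) \Cref{lem:Adams-ell-n}, and conclude via Boardman's criterion. The only point you do not address is that \Cref{lem:Adams-ell-n} is stated for perfect fields; the paper separately handles imperfect fields of positive characteristic by noting (via \Cref{rmk:fin-fin-vcd}) that such fields automatically have finite $2$-cohomological dimension, so one may compare instead with the Adams spectral sequence for $\HZ/\ell^n$. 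This is a short additional remark rather than a change in strategy.
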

\begin{proof}
If $F$ is perfect then
$E_2^{s,t,w}(\S/\ell^n) \cong E_2^{s,t,w}(\wHZ/\ell^n)$
for $t-s, w$ fixed and $s > s_0(t-s,w,2)$ by \Cref{cor:Ext-l}.
If $F$ has positive characteristic and is not necessarily perfect,
then $E_2^{s,t,w}(\S/\ell^n) \cong E_2^{s,t,w}(\HZ/\ell^n)$
for $t-s, w$ fixed and $s > s_0(t-s,w,2)$ by the observation at the end of the proof of \Cref{cor:Ext-l}.

Inductively,
$E_r^{s,t,w}(\S/\ell^n) \cong E_r^{s,t,w}(\wHZ/\ell^n)$
for $t-s,w$ fixed and $s > \max\{s_0(t - s-1, w, r-1), s_0(t-s, w, r-1), s_0(t-s+1, w, r-1)+r-1\} = s_0(t-s,w,r)$.
But $E_r^{s,t,w}(\wHZ/\ell^n) = 0$ for $r > n$, $t-s, w$ fixed and $s \gg 0$ by \Cref{lem:Adams-ell-n}.
Hence $E_r^{s,t,w}(\S/\ell^n) = 0$ for $t-s, w$ fixed $s \gg 0$ for some $r$.
Hence the spectral sequence is strongly convergent.
\end{proof}
As mentioned in the introduction, \Cref{cor:S/elln-conv} suggests a general strategy for extending topological computations with the Adams spectral sequence to motivic homotopy theory:
First prove that the computation holds for $\S/\ell^n$ by mimicking the classical argument, and then pass to the limit over $n$.

In Milnor-Witt degree 0 we note the following much easier strong convergence result.
\begin{lemma}
\label{lem:strong-MW0}
The motivic Adams spectral sequence for the sphere spectrum is strongly convergent in Milnor-Witt degree 0 over any field.
\end{lemma}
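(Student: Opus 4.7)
The plan is to show that the cobar complex computing the $E_2$-page is concentrated in non-negative Milnor-Witt degree. Since the motivic Adams differential $d_r$ decreases Milnor-Witt degree by one, this will immediately force every outgoing differential from a Milnor-Witt degree $0$ tridegree to be zero, and strong convergence will then follow from the same Mittag-Leffler argument used at the end of the proof of \Cref{cor:strong}.

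First I would inspect \eqref{eq:cobar}: a typical generator $l[\gamma_1\vert\dots\vert\gamma_s]$ lies in Milnor-Witt degree $\operatorname{MW}(l) + \sum_i \operatorname{MW}(\gamma_i) - s$, so it suffices to check that every element of $H_\star$ has Milnor-Witt degree $\geq 0$ and every element of $\oA_\star$ has Milnor-Witt degree $\geq 1$. The first is immediate from the vanishing of $H_{p,q}$ for $p < q$. For the second, the algebra generators recalled in \Cref{sec:recollection} have bidegrees giving $\operatorname{MW}(\xi_i) = \ell^i - 1 \geq 1$ and $\operatorname{MW}(\tau_j) = \ell^j \geq 1$ at either prime, so every monomial in the $\xi_i$ and $\tau_j$ of positive polynomial degree has Milnor-Witt degree $\geq 1$, and multiplying by an $H_\star$-coefficient of non-negative Milnor-Witt degree keeps it there. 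This yields $\Ext^{s,(t,w)}_{\A_\star}(H_\star, H_\star) = 0$ whenever $t - s - w < 0$.

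Next I would observe that for $(s,t,w)$ with $t - s - w = 0$, every outgoing differential $d_r \colon E_r^{s,t,w} \to E_r^{s+r, t+r-1, w}$ has target in Milnor-Witt degree $-1$, hence vanishes by the first step. In each tridegree of Milnor-Witt degree $0$ only incoming differentials (from Milnor-Witt degree $1$) can modify $E_r$, so the tower $\{E_r^{s,t,w}\}_r$ is a tower of surjections and is trivially Mittag-Leffler. The derived $E_\infty$-term vanishes, and conditional convergence of the motivic Adams spectral sequence upgrades to strong convergence by Boardman \cite{Boardman}, exactly as in the final paragraph of the proof of \Cref{cor:strong}.

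The argument is essentially bookkeeping once the Milnor-Witt degrees of the Hopf algebroid generators are in hand; there is no real obstacle, and no assumption on the base field $F$ (such as finite virtual cohomological dimension) is required since the computation is purely dimensional.
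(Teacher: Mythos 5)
Your argument is correct and follows the paper's proof essentially verbatim: the paper simply asserts that the $E_2$-page vanishes in negative Milnor-Witt degree and concludes $RE_\infty^{MW=0}=0$ since no differentials can exit, whereas you additionally spell out the routine bookkeeping in the cobar complex (that $H_\star$ has non-negative Milnor-Witt degree and $\oA_\star$ has Milnor-Witt degree $\geq 1$) which justifies that vanishing. The approach, the key observation, and the appeal to Boardman are identical.
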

\begin{proof}
The $E_2$-page of the spectral sequence is zero in negative Milnor-Witt degrees.
Hence there are no exiting differential from Milnor-Witt degree 0,
so $RE_\infty^{MW=0} = 0$.
\end{proof}

\section{The $\ell$-Bockstein spectral sequence}
\label{sec:Bockstein}
In this section we study the $\ell$-Bockstein spectral sequence for motivic cohomology. As we show below this is the same as the mod $\ell$ Adams spectral sequence for $\HZ$. In \Cref{sec:2} and \Cref{sec:odd} we proved that this spectral sequence is isomorphic (outside of Milnor-Witt degree 0 at the prime 2) in high filtration with the motivic Adams spectral sequence for the sphere spectrum.
We investigate strong convergence of the $\ell$-Bockstein spectral sequence,
and show that the $\ell$-Bockstein spectral sequence is not strongly convergent over number fields. Comparing with the motivic sphere spectrum we conclude that the motivic Adams spectral sequence is not strongly convergent over number fields.

For $\ell$ a prime consider the tower
\begin{equation}
\label{eq:Bockstein-tower}
\dots \HZ \xrightarrow{l} \HZ \xrightarrow{l} \HZ.
\end{equation}
The cofiber of each map in this tower is $H = \HZ/\ell$.
After applying $\pi_\star(-)$ we get a spectral sequence conditionally convergent to $\pi_\star(\HZ\ellcomp)$,
whose $E_1$-page is
\[
E^{s,t,w}_1 = H_{t,w}(F;\Z/\ell),\ s \geq 0.
\]
The map of cofiber sequences
\begin{equation}
\label{eq:cofib-Bock-Adams}
\begin{tikzcd}
\S \ar[r, "\ell"]\ar[d] & \S \ar[r]\ar[d] & \S/\ell\ar[d] \\
\Sigma^{-1,0}\ol{H} \ar[r] & \S \ar[r] & H
\end{tikzcd}
\end{equation}
induces a map of spectral sequences
\begin{equation}
\label{eq:Bock-Adams}
E_r^{s,t,w}(\text{Bockstein}) \to E_r^{s,t,w}(\text{Adams}).
\end{equation}
In the next lemma we show that \eqref{eq:Bock-Adams} is an isomorphism.
This is essentially the observation that \eqref{eq:Bockstein-tower} is an Adams resolution.
\begin{lemma}
\label{lem:Bockstein-Adams}
The map \eqref{eq:Bock-Adams} is an isomorphism from the $E_2$-page and onwards.
\end{lemma}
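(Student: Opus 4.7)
The strategy is to exhibit \eqref{eq:Bockstein-tower} as a mod-$\ell$ $H$-based Adams resolution of $\HZ$. Then both spectral sequences appearing in \eqref{eq:Bock-Adams} are Adams spectral sequences for $\HZ$, just built from different resolutions, and the comparison map is induced by a morphism of Adams resolutions lifting the identity on $\HZ$ at filtration zero, which is automatically an isomorphism on $E_2$ and on all subsequent pages.

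First, I would verify that \eqref{eq:Bockstein-tower} satisfies the axioms of an $H$-based Adams resolution of $\HZ$. Each cofiber is $H$, trivially an $H$-module spectrum. The essential condition is that the structure map $\ell\colon \HZ \to \HZ$ induces zero on $H_\star(-)$, equivalently that $1\wedge\ell\colon H \wedge \HZ \to H \wedge \HZ$ is null; this is immediate because $\ell$ acts as zero on $H$. Consequently, smashing the cofiber sequence $\HZ \xrightarrow{\ell} \HZ \to H$ with $H$ collapses the long exact sequence on $\pi_\star$ into the split short exact sequences
\begin{equation*}
0 \to H_\star(\HZ) \to H_\star(H) \to \Sigma^{1,0} H_\star(\HZ) \to 0
\end{equation*}
of $H_\star$-modules, confirming the Adams resolution axioms.

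Second, I would observe that iterating the bottom row of \eqref{eq:cofib-Bock-Adams} and smashing with $\HZ$ is precisely the construction of the canonical $H$-based Adams resolution of $\HZ$, while iterating the top row and smashing with $\HZ$ recovers the Bockstein tower \eqref{eq:Bockstein-tower}. The vertical maps in \eqref{eq:cofib-Bock-Adams} smashed with $\HZ$ therefore iterate to a morphism of Adams resolutions of $\HZ$ lifting the identity at filtration zero, and this morphism gives rise to the comparison map \eqref{eq:Bock-Adams} of spectral sequences. Invoking the standard fact that the $E_2$-page of any $H$-based Adams resolution of a spectrum $X$ is canonically isomorphic to $\Ext_{\A_\star}^{s,(t,w)}(H_\star, H_\star(X))$ regardless of the chosen resolution, and that any morphism of resolutions lifting the identity realizes this canonical identification, yields the desired isomorphism from the $E_2$-page onward.

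The main technical content lies in the first step; once the $\ell$-tower is recognized as an Adams resolution, the rest is a formal application of the uniqueness of the Adams $E_2$-page. The crux of that verification is simply the observation that $\ell$ annihilates $H$.
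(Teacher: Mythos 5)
Your proposal is correct, but it takes a genuinely different route from the paper. You observe that the tower $\cdots \to \HZ \xrightarrow{\ell} \HZ \xrightarrow{\ell} \HZ$ satisfies the axioms of an $H$-based Adams resolution (each cofiber $H$ is an $H$-module, and $\ell$ acts as zero on the $H$-module $H\wedge\HZ$, so $H_\star(\ell)=0$), and then invoke the standard uniqueness principle: any morphism of Adams resolutions covering the identity on $\HZ$ induces an isomorphism of spectral sequences from $E_2$ onwards. The paper instead proceeds by explicit computation: it identifies the Bockstein $E_2$-page as $\ker(\beta)/\im(\beta)$, identifies the Adams $E_2$-page via the base change theorem for Hopf algebroids and the cobar complex as $\ker(\beta)h_0^s/\im(\beta)h_0^s$, and then verifies directly that the comparison map sends $x$ to $xh_0^s$ (via the lift $\tau_0\colon\S\to H\wedge\Sigma^{-1,0}H$ in the diagram), concluding it is a bijection. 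Your abstract argument is shorter and isolates the essential observation that the $\ell$-tower is itself an Adams resolution; the paper's concrete argument pays for its length by producing the explicit formula \eqref{eq:E2-HZ} for the $E_2$-page in terms of Milnor $K$-theory, which is reused in later sections (e.g.\ in the proofs of \Cref{cor:strong} and \Cref{thm:zero-stem}). Both proofs are valid; since the surrounding text needs the explicit identification anyway, the paper's choice is pragmatic, but yours is the conceptually cleaner proof of the isomorphism as stated.
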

\begin{proof}
The $E_1$-page of the Bockstein spectral sequence takes the form
\[
E^{s,t,w}_1(\text{Bockstein}) = \pi_{t,w}(\HZ/\ell),\ s\geq 0,
\]
and the $d_1$-differential is simply the Bockstein $\beta$.
Hence, the $E_2$-page is
\[
E^{s,t,w}_2(\text{Bockstein}) = \begin{cases}
  \ker(\beta : H_{t,w})/\im(\beta : H_{t+1,w}) & s > 0 \\
  \ker(\beta : H_{t,w}) & s = 0 \\
  0 & s < 0.
\end{cases}
\]
The cohomology of $\HZ$ is %
$
\A_\star \cotensor_{E_\star(0)} H_\star,
$
where $E_\star(0)$ is the Hopf-algebra $(H_\star, H_\star[\tau_0]/(\tau_0^2))$.
The usual base change theorem \cite[A1.13.12]{Ravenel} implies
\[
E_2^{s,t,w}(\text{Adams}) = \Ext_{E_\star(0)}^{s,(t,w)}(H_\star, H_\star).
\]
The latter is readily computed with the cobar complex to be
\begin{equation}
\label{eq:E2-HZ}
E_2^{s,t,w}(\text{Adams}) = \ker(\beta : H_{t,w})h_0^s/\im(\beta : H_{t+1,w})h_0^{s}.
\end{equation}
It only remains to observe that \eqref{eq:Bock-Adams} induces an isomorphism of these groups.
The map is surjective, since already on the $E_1$-page the map sends $x \in H^{p,w} = E^{s,t,w}_1(\text{Bockstein})$ to $x h_0^s \in E^{s,t,w}_1(\text{Adams})$.

Indeed, the map is induced by powers of the map $\S \to \Sigma^{-1,0}\oH$ in \eqref{eq:cofib-Bock-Adams}.
This is the map representing $h_0 = [\tau_0]$ in the cobar complex because of the commutative diagram
\[
\begin{tikzcd}
  && H \wedge \Sigma^{-1,0} H\ar[d] \\
\S \ar[r]\ar[rru, "\tau_0", dashed]\ar[d, "\ell"] & \Sigma^{-1,0}\oH \ar[r]\ar[d] & H \wedge \Sigma^{-1,0}\oH \ar[d] \\
\S \ar[r, "="] & \S \ar[r] & H \wedge \S
\end{tikzcd}
\]
where the vertical column is a cofiber sequence.
The dashed lift exists since the composite $\S \xrightarrow{\ell} \S \to H\wedge \S$ is zero.

Hence \eqref{eq:Bock-Adams} maps $x$ to $x h_0^s$.
So the map is surjective on the $E_2$-page.
Furthermore \eqref{eq:Bock-Adams} is injective, since the $d_1$-differential adds the same relations, and the relations are mapped identically by \eqref{eq:Bock-Adams}. %
\end{proof}

\begin{corollary}
The following are equivalent:
\begin{enumerate}
\item The mod $\ell$ motivic Adams spectral sequence for $\HZ$ is strongly convergent in bidegree $(t, w)$,
\item The $\ell$-Bockstein spectral sequence for motivic cohomology is strongly convergent in bidegree $(t, w)$.
\end{enumerate}
\end{corollary}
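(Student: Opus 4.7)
The plan is to deduce the corollary directly from Lemma~\ref{lem:Bockstein-Adams}, which already does the bulk of the work: the comparison map \eqref{eq:Bock-Adams} is an isomorphism of spectral sequences from the $E_2$-page onward. Since strong convergence of a conditionally convergent spectral sequence is, by Boardman's criterion \cite[Theorem 7.3]{Boardman}, equivalent to the vanishing of the derived $E_\infty$-term $RE_\infty$, and since $RE_\infty$ is computed entirely from the $E_r$-pages and differentials for $r \geq 2$, the equivalence should follow formally.

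Concretely, I would first record that both spectral sequences are conditionally convergent: the Adams spectral sequence for $\HZ$ is conditionally convergent to $\pi_\star(\HZ_H^\wedge)$ by the usual motivic Adams tower construction recalled in Section~\ref{sec:recollection}, and the $\ell$-Bockstein spectral sequence is conditionally convergent to $\pi_\star(\HZ\ellcomp)$ since it arises from the tower \eqref{eq:Bockstein-tower} whose homotopy limit is the $\ell$-completion. Next, I would invoke Boardman's criterion in the form that, for a conditionally convergent spectral sequence, strong convergence in a fixed tridegree is equivalent to $RE_\infty^{s,t,w} = 0$ in that tridegree. Finally, the isomorphism of spectral sequences from Lemma~\ref{lem:Bockstein-Adams} gives an identification of $E_r^{s,t,w}$ and of all higher differentials $d_r$ for $r \geq 2$, so the inverse-sequence $\{E_r^{s,t,w}\}_r$ with its structure maps agrees on both sides. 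Consequently $\lim_r E_r^{s,t,w}$ and $\lim_r^1 E_r^{s,t,w}$ agree, and therefore the derived $E_\infty$-terms agree, yielding the equivalence of strong convergence in bidegree $(t,w)$.

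There is essentially no obstacle beyond bookkeeping. The only thing to be a little careful about is that strong convergence in a fixed bidegree $(t,w)$ of the abutment corresponds to the vanishing of $RE_\infty^{s,t,w}$ for all filtration degrees $s$ simultaneously, but since the isomorphism of Lemma~\ref{lem:Bockstein-Adams} is tridegreewise this causes no trouble. I would keep the proof to two or three sentences: cite conditional convergence of both, cite Boardman's characterization, and invoke Lemma~\ref{lem:Bockstein-Adams} to conclude.
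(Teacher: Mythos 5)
Your argument is correct and follows essentially the same route as the paper: the paper's one-line proof cites the Mapping Lemma (Weibel, Exercise 5.2.3), which is precisely the formal packaging of the observation you spell out—that an isomorphism of conditionally convergent spectral sequences from $E_2$ onward identifies their derived $E_\infty$-terms, so Boardman's criterion transfers strong convergence from one side to the other. You have simply unwound the citation into its constituent steps.
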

\begin{proof}
  This is a consequence of the mapping lemma \cite[Exercise
  5.2.3]{Weibel}.
\end{proof}

\subsection*{Strong convergence of the $\ell$-Bockstein spectral sequence}
Boardman's criteria for strong convergence \cite[7.1]{Boardman} of the $\ell$-Bockstein spectral sequence amounts to showing $\lim^1_r Z^{p,w}_r = 0$.
If we use $\ell$-local motivic cohomology, and write  $\partial : H^{p,w}(F;\Z/\ell)\to H^{p+1,w}(F;\Z_{(\ell)})$ for the relevant connecting homomorphism, we must then show the group
\begin{equation}
\label{eq:boardman}
\lim^1_r Z^{p,w}_r 
= \lim^1_r \partial^{-1}(\ell^{r-1} \cdot H_{p-1,w}(F;\Z_{(\ell)}))
= \lim^1_r \partial^{-1}(\ell^{r-1} \cdot H^{-p+1,-w}(F;\Z_{(\ell)}))
\end{equation}
is trivial.
For all $r$ we have short exact
sequences
\[
0 \to \im (\pr) \to Z_{r+1}^{-p,-w} \to \im(\partial) \cap \ell^r\cdot H^{p+1,w}(F;\Z_{(\ell)}) \to 0,
\]
where $\pr : H^{p,w}(F;\Z_{(\ell)}) \to H^{p,w}(F;\Z/\ell)$.
The last group is identified with $\im(\partial) \cap \ell^r \cdot H^{p+1,w}(F;\Z_{(\ell)}) = {}_\ell (\ell^r\cdot H^{p+1,w}(F;\Z_{(\ell)}))$.
Hence the $\lim$-$\lim^1$ sequence implies
\begin{equation}
\label{eq:RE}
\lim_r Z_{r+1}^{p,w} \cong  \lim_r {}_\ell (\ell^r\cdot H^{p+1,w}(F;\Z_{(\ell)}))
\text{ and } \lim^1_r Z_{r+1}^{p,w} \cong \lim^1_r {}_\ell (\ell^r\cdot H^{p+1,w}(F;\Z_{(\ell)})).
\end{equation}
So we have reduced the vanishing of \eqref{eq:boardman} to a question
on the structure of motivic cohomology of the base field.

\begin{lemma}
\label{lem:lim-assumption}
If $\lim_r \ell^r B = 0$,
then
$\lim_r {}_{\ell}(\ell^r  B) = 0$. If the exponent of $\ell$-torsion for $B$ is bounded, then 
$\lim^1_r {}_{\ell}(\ell^r  B) = 0$.
\end{lemma}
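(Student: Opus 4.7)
The plan is to observe that both inverse systems $\{{}_\ell(\ell^r B)\}_r$ and $\{\ell^r B\}_r$ have all transition maps given by inclusions, so the inverse limits may be computed as intersections and Mittag--Leffler reduces to stabilization. Once that is recognized, both claims are essentially set-theoretic.

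For the first claim, I would argue that since ${}_{\ell}(\ell^r B) \subseteq \ell^r B$ and both systems use inclusions as their structure maps,
\[
\lim_r {}_\ell(\ell^r B) \;=\; \bigcap_{r} {}_\ell(\ell^r B) \;\subseteq\; \bigcap_r \ell^r B \;=\; \lim_r \ell^r B \;=\; 0.
\]

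For the second claim, let $\ell^N$ be a bound for the exponent of the $\ell$-torsion of $B$; that is, any element $x \in B$ with $\ell^k x = 0$ for some $k$ already satisfies $\ell^N x = 0$. I claim that ${}_\ell(\ell^r B) = 0$ for every $r \geq N$. Indeed, given $y \in {}_\ell(\ell^r B)$, write $y = \ell^r x$ with $\ell y = 0$. Then $\ell^{r+1} x = 0$, so $x$ is $\ell$-power torsion, hence $\ell^N x = 0$ by hypothesis, and therefore $y = \ell^{r-N}(\ell^N x) = 0$. With the inverse system eventually zero, both $\lim_r$ and $\lim^1_r$ vanish trivially (ML is automatic).

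There is no real obstacle here; the only care needed is to identify the assumption ``bounded exponent of $\ell$-torsion'' with the statement that every $\ell$-power-torsion element is killed by a fixed $\ell^N$, and to note that the inclusions $\ell^{r+1} B \hookrightarrow \ell^r B$ restrict to inclusions on the $\ell$-torsion so that the two descending systems can be compared termwise.
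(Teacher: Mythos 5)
Your proof is correct, and for the first claim it takes a genuinely simpler route than the paper. Both claims hinge on the observation that $\{\ell^r B\}_r$ and $\{{}_\ell(\ell^r B)\}_r$ are towers whose transition maps are inclusions of subgroups of $B$. You use this directly: the inverse limit of a tower of inclusions is the intersection, and since ${}_\ell(\ell^r B) \subseteq \ell^r B$ the vanishing of $\bigcap_r \ell^r B$ forces the vanishing of the smaller intersection. The paper instead packages the same information homologically: it exhibits the short exact sequence of towers
\[
0 \to {}_\ell(\ell^r B) \to \ell^r B \to X_r \to 0
\]
(where $X_r$ is identified with $\ell^{r+1}B$ via multiplication by $\ell$) and then invokes the six-term $\lim$--$\lim^1$ exact sequence, reading off $\lim_r {}_\ell(\ell^r B) = 0$ from the vanishing of $\lim_r \ell^r B$. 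Your argument is shorter and avoids the spectral-sequence machinery entirely; the paper's has the mild advantage of placing the statement inside the framework it is already using (the $\lim$--$\lim^1$ formalism that controls the derived $E_\infty$-term), and the auxiliary identification $X_r \cong \ell^{r+1}B$ is a byproduct one could reuse. For the second claim your argument and the paper's are essentially identical: both note that a bound $\ell^N$ on the exponent of $\ell$-torsion in $B$ forces ${}_\ell(\ell^r B) = 0$ for $r \geq N$, so the tower is eventually zero and the trivial Mittag--Leffler condition kills $\lim^1$. Your explicit computation $y = \ell^{r-N}(\ell^N x) = 0$ is a clean way to justify that vanishing.
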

\begin{proof}
Consider the exact sequence
\[
0 \to {}_{\ell} (\ell^r  B) \to \ell^r  B \xrightarrow{\ell} \ell^r  B \to \ell^r  B/\ell^{r+1} B \to 0.
\]
Let $X_r$ be the cokernel $\coker({}_{\ell} (\ell^r B) \to \ell^r
B)$. From the exactness of the above sequence, it follows that $X_r$
is the kernel of the quotient map
$\ell^r B \to \ell^r B/\ell^{r+1} B$; hence there is an isomorphism
$X_r \cong \ell^{r+1} B$.  The $\lim$-$\lim^1$ long exact sequence from the short exact sequence defining $X_r$ is
\[
0 \to \lim_r {}_\ell(\ell^rB) \to 0 \to \lim_r X_r \to \lim^1_r {}_{\ell}(\ell^r B) \to \lim^1_r \ell^r B
\to  \lim^1_r X_r \to 0.
\]
Thus we obtain the vanishing of $\lim_r {}_{\ell}(\ell^r B)$.

Now observe that if the exponent of $\ell$-torsion for $B$ is bounded,
then ${}_\ell (\ell^r B) = 0$ for sufficiently large $r$. Thus the trivial
Mittag-Leffler condition for the tower ${}_\ell (\ell^r B)$ is satisfied
and the vanishing of $\lim^1_r {}_\ell (\ell^r B)$ follows.
\end{proof}

Let $F$ be a number field.
We have the localization sequence
\begin{equation}
\label{eq:num-field-localization}
\dots \to H^{p,w}(\OFS;\Z_{(\ell)}) \to H^{p,w}(F;\Z_{(\ell)})\to \oplus_{\mathfrak p} H^{p-1,w-1}(k({\mathfrak p});\Z_{(\ell)})\to \dots.
\end{equation}
For $\mathcal{S} \supset \{\ell, \infty \}$ a finite set of places, both 
$H^{p,w}(\OFS;\Z_{(\ell)})$ and $H^{p,w}(k({\mathfrak p});\Z_{(\ell)})$ are finite groups for all $(p,w) \neq (0,0)$
\cite[Section 14]{Levine99}.
\begin{lemma}
\label{lem:num-field}
For a number field $F$ and any $(p,w) \neq (0,0)$, we have
\[
\ell^r \cdot H^{p,w}(F;\Z_{(\ell)}) \cong \ell^r \cdot \oplus_{\mathfrak p} H^{p-1,w-1}(k({\mathfrak p});\Z_{(\ell)})
\]
for $r \gg 0$.
\end{lemma}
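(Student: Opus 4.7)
The plan is to exploit the localization sequence \eqref{eq:num-field-localization} for a finite set of places $\mathcal{S}\supseteq\{\ell,\infty\}$, using as key input that $H^{p,w}(\OFS;\Z_{(\ell)})$ is finite for $(p,w)\neq(0,0)$. First I would extract from \eqref{eq:num-field-localization} the four-term exact sequence
\[
H^{p,w}(\OFS;\Z_{(\ell)}) \xrightarrow{\alpha} H^{p,w}(F;\Z_{(\ell)}) \xrightarrow{\partial} \bigoplus_{\mathfrak{p}\notin\mathcal{S}} H^{p-1,w-1}(k(\mathfrak{p});\Z_{(\ell)}) \xrightarrow{g} H^{p+1,w}(\OFS;\Z_{(\ell)}),
\]
whose outer terms are finite $\ell$-primary groups; let $\ell^N$ annihilate both. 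Abbreviate the middle groups by $B$ and $C$, and set $A' = \im\alpha = \ker\partial$ and $C' = \im g$.

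For $r\ge N$ I would argue that $\partial$ restricts to an isomorphism $\ell^r B \xrightarrow{\cong}\ell^r C$. Surjectivity rests on two observations: (i) for any $c\in\ell^r C$ one has $g(c)\in\ell^r C' = 0$, so $c\in\ker g = \partial(B)$; and (ii) the direct-sum structure of $C$ -- each summand is motivic cohomology of a finite residue field, hence a finite cyclic $\ell$-group or, in the case $(p-1,w-1)=(0,0)$, a copy of $\Z_{(\ell)}$ -- forces the tower $\{\ell^r C\}_r$ to stabilize up to abstract isomorphism, so that the inclusion $\ell^r\ker g\subseteq\ell^r C$ becomes an isomorphism for $r\gg 0$ (explicitly, from $\ell^N C\subseteq\ker g$ one gets $\ell^{r+N}C\subseteq\ell^r\ker g\subseteq\ell^r C$, and the stabilization collapses this sandwich). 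Injectivity reduces to showing $\ell^r B\cap A' = 0$ for $r\gg 0$; since $A'$ is finite, it is enough to argue that no nonzero element of the $\ell$-primary torsion of $B$ is infinitely $\ell$-divisible in $B$.

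The main obstacle will be the injectivity step, because it requires structural information about $H^{p,w}(F;\Z_{(\ell)})$ itself: that its $\ell$-primary torsion subgroup is finite and splits off, forcing the infinitely-$\ell$-divisible part of $B$ to be torsion-free. This is a consequence of the same localization sequence combined with the finiteness of $H^{\star}(\OFS;\Z_{(\ell)})$ -- essentially $B$ fits into an extension of an infinite-rank but torsion-free part (the valuation image) by a finite torsion group. Once this structural fact is in hand, the remaining work is just bookkeeping: tracking the finite groups $A'$ and $C'$ through $\partial$ via the exponent bound $\ell^N$ yields the claimed isomorphism for all sufficiently large $r$.
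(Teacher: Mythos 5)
Your overall strategy --- extracting the four-term exact sequence $A\to B\xrightarrow{\partial}C\xrightarrow{g}D$ from \eqref{eq:num-field-localization} with $A$, $D$ finite --- is exactly what the paper's one-line proof intends, and the reduction in your step (i) as well as the reduction of injectivity to $\ell^r B\cap\ker\partial=0$ are both sound. However, each of the two steps you use to finish contains a genuine gap.

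For surjectivity you need the set-theoretic equality $\ell^r\ker g=\ell^r C$, but the route offered --- ``the stabilization collapses the sandwich $\ell^{r+N}C\subseteq\ell^r\ker g\subseteq\ell^r C$'' --- is not a valid deduction. Even if the outer terms were abstractly isomorphic (which does not follow from the direct-sum structure alone; consider cyclic summands of orders $\ell^{j^2}$), a sandwich of abstractly isomorphic groups need not collapse, e.g.\ $(\Z/4)^{(\N)}\subseteq(\Z/4)^{(\N)}\oplus\Z/2\subseteq(\Z/4)^{(\N)}$; and in any case an abstract isomorphism is weaker than the equality of subgroups required for $\partial|_{\ell^r B}$ to surject onto $\ell^r C$. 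The argument that actually works when $C$ is torsion is direct: lift a finite generating set of $C/\ker g$ to elements of $C$ of orders bounded by some $\ell^{r_0}$; then $C$ equals $\ker g$ plus the $\ell^r$-torsion subgroup of $C$ for $r\geq r_0$, whence $\ell^r C=\ell^r\ker g$. For injectivity, the structural picture of $B$ you invoke --- an extension of a \emph{torsion-free} valuation image by a finite torsion group --- is wrong in the degrees where the lemma is used: $\im\partial=\ker g$ sits inside the torsion group $C=\oplus_{\mathfrak{p}}H^{p-1,w-1}(k(\mathfrak{p});\Z_{(\ell)})$ and $\ker\partial$ is finite, so $B$ is entirely torsion. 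The fact you actually need, that no nonzero element of the finite group $\ker\partial$ lies in $\bigcap_r\ell^r B$, does not follow from torsionness or reducedness alone and requires additional input on $H^{p,w}(F;\Z_{(\ell)})$; you flag this as the ``main obstacle,'' but the sketch you offer does not close it. (A further wrinkle: the case $(p-1,w-1)=(0,0)$ you allow, where $C$ has $\Z_{(\ell)}$ summands, is $(p,w)=(1,1)$, in which $H^{1,1}(\OFS;\Z_{(\ell)})=\OFS^\times\otimes\Z_{(\ell)}$ is \emph{not} finite, so the finiteness hypothesis breaks down and this case should be excluded rather than accommodated.)
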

\begin{proof}
This follows since $H^{p,w}(\OFS;\Z_{(\ell)})$ is finite for $(p,w) \neq (0,0)$
and the localization sequence.
\end{proof}
  
\begin{lemma}
\label{lem:B-nonvanish}
Let $B = \oplus_{j\geq 1} \Z/\ell^{k_j}$, for some increasing sequence of integers $\{k_j\}_{\geq 1}$.
Then $\lim^1_r {}_{\ell}(\ell^r B) \neq 0$.
\end{lemma}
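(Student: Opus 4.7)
The plan is to unpack the tower $\{A_r\}_{r \geq 0}$ with $A_r := {}_\ell(\ell^r B)$ explicitly and then exhibit a non-bounding 1-cocycle in $\prod_r A_r$. Write $e_j$ for a generator of the $j$-th summand of $B$ and set $f_j = \ell^{k_j - 1}e_j$. Then $\ell^r \cdot (\Z/\ell^{k_j})$ is cyclic of order $\ell^{\max(0,k_j - r)}$, whose $\ell$-torsion is generated by $f_j$ when $k_j > r$ and is zero otherwise. Therefore
\[
A_r \cong \bigoplus_{j:\, k_j > r} \Z/\ell \cdot f_j,
\]
and the transition map $A_{r+1} \to A_r$ is the natural inclusion of one direct summand into a larger one. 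After passing (if necessary) to a subsummand of $B$ keeping one $j$ per distinct value of $k_j$, which is harmless since $\lim^1$ splits across direct sum decompositions of towers, I may assume $k_1 < k_2 < \cdots$ with $k_j \to \infty$.

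Then I would use the standard description of $\lim^1$ as the cokernel of $1 - \mathrm{shift}\colon \prod_r A_r \to \prod_r A_r$, where $\mathrm{shift}$ sends $(x_{r+1})$ to the inclusion into $A_r$. Consider the cocycle $(y_r)_{r \geq 0}$ defined by $y_r = f_n$ when $r = k_n - 1$ for some $n$ and $y_r = 0$ otherwise; this lies in $\prod_r A_r$ since $f_n \in A_{k_n - 1}$. I claim its class in $\lim^1$ is nonzero.

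Suppose for contradiction that $y_r = x_r - x_{r+1}$ in $A_r$ for some $(x_r) \in \prod_r A_r$. Telescoping yields
\[
x_r - x_{R+1} \;=\; \sum_{n:\, r < k_n \leq R+1} f_n
\]
for every $R \geq r$. The right-hand side is supported on slots $n$ with $r < k_n \leq R+1$, while $x_{R+1} \in A_{R+1}$ is supported on slots $n$ with $k_n > R+1$; these supports are disjoint. Hence the $n$-th coordinate of $x_r$ equals $f_n \neq 0$ for every $n$ satisfying $r < k_n \leq R+1$. Letting $R\to \infty$, we conclude that $x_r$ has infinitely many nonzero coordinates, contradicting $x_r \in A_r = \bigoplus_{j:\,k_j>r}\Z/\ell\cdot f_j$. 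This proves $\lim^1_r {}_\ell(\ell^r B) \neq 0$.

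The only real step is writing down the correct cocycle; everything else is bookkeeping. The key feature being exploited is that the tower fails Mittag-Leffler in a strong way, namely every $A_{r+1} \subsetneq A_r$ proper inclusion drops off exactly one $\Z/\ell$ summand (by our strict monotonicity reduction), so the natural telescoping sum for a would-be bounding chain escapes the direct sum and lives only in the product. I do not anticipate a genuine obstacle beyond keeping track of indices carefully.
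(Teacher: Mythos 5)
Your proof is correct and follows the same essential strategy as the paper's: write down an explicit element of $\prod_r {}_\ell(\ell^r B)$ and show it is not in the image of $1-\operatorname{shift}$. Two small points of comparison are worth noting. First, you place the generator $f_n=\ell^{k_n-1}e_n$ at filtration $r=k_n-1$, which is where it actually lives; the paper's displayed cocycle puts it at $r=k_j$, but ${}_\ell(\ell^{k_j}B)$ has trivial $j$-th component since $\ell^{k_j}e_j=0$, so your indexing quietly corrects a minor off-by-one in the published formula. Second, the paper simply asserts that the class is nonzero in $\lim^1$, whereas you supply the actual verification: the telescoping identity $x_r - x_{R+1}=\sum_{n:\,r<k_n\le R+1}f_n$ together with the disjointness of supports forces a hypothetical bounding chain $x_r$ to have infinitely many nonzero coordinates, contradicting $x_r\in A_r=\bigoplus_{j:\,k_j>r}\Z/\ell\cdot f_j$. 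Your preliminary reduction to a strictly increasing subsequence is harmless and correctly justified by additivity of $\lim^1$ over a direct sum decomposition of towers (and is needed only if one reads ``increasing'' as allowing repeats; the application in the paper already has $k_1<k_2<\cdots$). Altogether this is the paper's argument made fully rigorous.
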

\begin{proof}
Write $B = \oplus_{j\geq 1} \Z/\ell^{k_j}\{e_j\}$,
and consider the element
\[
x \in \prod_r {}_{\ell}(\ell^r B) :
\left(
r \mapsto \begin{cases}
    \ell^{k_j-1}e_j & r = k_j \\
    0 & r \neq k_j
\end{cases}\in {}_{\ell} \ell^{r}B
\right)
.
\]
The image of $x$ in $\lim^1_r {}_{\ell}\ell^r B$ is nonzero.
\end{proof}

\begin{lemma}
\label{lem:B-split}
Let $F$ be a number field with primes $\mathfrak p$ in $\OFS$, $\mathcal S\supset \{\ell, \infty \}$ a finite set of places.
Then there exists a split surjection
\[
\oplus_{\mathfrak p} H^{1,w}(k(\mathfrak p);\Z)
\to
\oplus_{j\geq 1} \Z/\ell^{k_j},
\]
for some increasing sequence of integers $k_1 < k_2 < k_3 < \dots$.
\end{lemma}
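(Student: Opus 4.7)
The plan is to reduce the lemma to producing primes of $\OFS$ with arbitrarily large $\ell$-adic valuation of $N\mathfrak p - 1$, and then to split off the $\ell$-primary part of each cyclic summand. First I would identify the summands on the left. Since $\mathcal S \supset \{\ell,\infty\}$, every residue field $k(\mathfrak p)$ is a finite field of cardinality $N\mathfrak p$ and of characteristic different from $\ell$, so by the Geisser-Levine computation of motivic cohomology of a finite field (equivalently, via Quillen's $K_{2w-1}(\F_q) \cong \Z/(q^w-1)$ together with the collapse of the motivic-to-$K$-theory spectral sequence),
\[
H^{1,w}(k(\mathfrak p);\Z) \cong \Z/(N\mathfrak p^w - 1)
\]
for $w \geq 1$. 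This cyclic group decomposes canonically as a direct sum over its primary components, so $\Z/\ell^{v_\ell(N\mathfrak p^w - 1)}$ splits off as a direct summand.

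Next I would construct the primes $\mathfrak p_j$ via Chebotarev's density theorem applied to the cyclotomic extensions $F(\zeta_{\ell^k})/F$. A prime $\mathfrak p$ of $\OFS$ is unramified in such an extension (as $\mathfrak p \nmid \ell$), and it splits completely exactly when $N\mathfrak p \equiv 1 \pmod{\ell^k}$. Chebotarev produces infinitely many such $\mathfrak p$ for each $k$, and any such $\mathfrak p$ satisfies $v_\ell(N\mathfrak p^w - 1) \geq v_\ell(N\mathfrak p - 1) \geq k$. Choosing $\mathfrak p_1, \mathfrak p_2, \ldots$ recursively, we may arrange $k_j := v_\ell(N\mathfrak p_j^w - 1)$ to be strictly increasing.

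Finally, the split inclusions $\Z/\ell^{k_j} \hookrightarrow H^{1,w}(k(\mathfrak p_j);\Z)$ assemble into a split monomorphism
\[
\bigoplus_{j\geq 1} \Z/\ell^{k_j} \hookrightarrow \bigoplus_{\mathfrak p} H^{1,w}(k(\mathfrak p);\Z),
\]
whose retraction is the required split surjection. The only substantive inputs are the Chebotarev statement and the identification of $H^{1,w}(\F_q;\Z)$, both of which are standard, so I do not anticipate any serious obstacle; the content of the argument is really just the cyclotomic Chebotarev step.
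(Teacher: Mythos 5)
Your proposal is correct and follows essentially the same strategy as the paper: identify $H^{1,w}(k(\mathfrak p);\Z)$ with $\Z/(N\mathfrak p^w-1)$ via Quillen's computation, split off the $\ell$-primary part, and produce infinitely many primes with unbounded $\nu_\ell(N\mathfrak p - 1)$. The only difference is cosmetic: the paper invokes Dirichlet for rational primes $p$ with $\nu_\ell(p-1)$ large and then picks any $\mathfrak p$ over $p$, while you apply Chebotarev directly over $F$ via the cyclotomic extensions $F(\zeta_{\ell^k})/F$; both deliver the same primes and both argue $\nu_\ell(N\mathfrak p^w-1) \ge \nu_\ell(N\mathfrak p - 1)$ from $N\mathfrak p - 1 \mid N\mathfrak p^w - 1$.
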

\begin{proof}
By Quillen's computation of algebraic $K$-theory of finite fields we have
\[
H^{1,w}(k(\mathfrak p);\Z_{\ell})
\cong \Z_{(\ell)}/({\mathfrak p}^w - 1)
\cong \Z/\ell^{\nu_\ell({\mathfrak p}^w-1)}.
\]
Here $\nu_\ell$ is the $\ell$-adic valuation.
Note that when $q \equiv 1 \bmod \ell$ then
$\nu_\ell(q^w - 1) = \nu_{\ell}(q-1) + \nu_{\ell}(w)$.

Let $k_0 = 0$. We will find primes $\mathfrak p$ in $F$ inductively whose $\ell$-adic valuations are increasing.
By Dirichlet's theorem about primes in arithmetic progressions \cite[Theorem 4.1.2]{Serre:arithmetic}
we can for any integer $k_{j}$ find a prime $p$ with $\ell$-adic valuation $\nu_{\ell}(p) > k_j$. %
Choose a prime $\mathfrak p$ of $F$ lying over $p$ and set $k_{j+1} = \nu_\ell({\mathfrak p}^w - 1)$.
\end{proof}

\begin{corollary}
\label{lem:HZ-notconv}
The motivic Adams spectral sequence for $\HZ$ has nonvanishing derived $E_\infty$-term in tridegrees $(s, t, w)$ with $t - s = -1, w < -1, s \geq 0,$ over any number field.
That is,
$RE_\infty^{s,t,w}(\HZ) \neq 0$ when $ t-s=-1,w < -1, s \geq 0$.
In particular the motivic Adams spectral sequence for $\HZ$ over a number field is \emph{not} strongly convergent.
\end{corollary}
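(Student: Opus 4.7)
The plan is to invoke Lemma \ref{lem:Bockstein-Adams} to identify the motivic Adams spectral sequence for $\HZ$ with the $\ell$-Bockstein spectral sequence for motivic cohomology (from the $E_2$-page onwards), and then apply Boardman's criterion for strong convergence. By the reformulation given in \eqref{eq:RE}, the Boardman obstruction $\lim^1_r Z_r^{p, w}$ is isomorphic to $\lim^1_r {}_\ell(\ell^r H^{p+1, w}(F;\Z_{(\ell)}))$, so it suffices to exhibit a cohomological bidegree where the latter is nonzero. Under the Bockstein--Adams comparison and the passage between homological and cohomological indices, the target Adams tridegrees $(s, t, w)$ with $t - s = -1$, $w < -1$, $s \geq 0$ correspond to studying motivic cohomology of $F$ in cohomological bidegree $(2, w')$ with $w' = -w \geq 2$.

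To produce the desired nonvanishing, the strategy is to reduce the computation to an explicit tractable summand. First, Lemma \ref{lem:num-field} identifies $\ell^r H^{2, w'}(F;\Z_{(\ell)}) \cong \ell^r \oplus_{\mathfrak p} H^{1, w' - 1}(k(\mathfrak p);\Z_{(\ell)})$ for $r \gg 0$. Since $w' - 1 \geq 1$, Lemma \ref{lem:B-split} provides a split surjection from the right-hand side onto $\oplus_{j \geq 1} \Z/\ell^{k_j}$ for some strictly increasing sequence $\{k_j\}$, constructed from Quillen's computation of the $K$-theory of finite fields together with Dirichlet's theorem on primes in arithmetic progressions. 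The functors $\ell^r(-)$ and ${}_\ell(-)$ preserve split surjections levelwise, so one obtains a split surjection of towers of abelian groups, and applying $\lim^1$ yields a surjection
\[
\lim^1_r {}_\ell(\ell^r H^{2, w'}(F;\Z_{(\ell)})) \twoheadrightarrow \lim^1_r {}_\ell(\ell^r \oplus_{j \geq 1} \Z/\ell^{k_j}).
\]
The target is nonzero by Lemma \ref{lem:B-nonvanish}, and hence so is the source. Under Lemma \ref{lem:Bockstein-Adams} this transfers to nonvanishing of $RE_\infty^{s, t, w}(\HZ)$ for every $s \geq 0$ with $t - s = -1$ and $w < -1$, since the $E_\infty$-page of the Bockstein spectral sequence for $\HZ$ is built from $h_0$-towers that propagate the non-convergence up through every filtration level.

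The main obstacle is largely notational: carefully tracking the passage between homological and cohomological bidegrees, reconciling the Bockstein and Adams indexings, and verifying that the full $h_0$-tower in the Adams spectral sequence detects the same Boardman obstruction, so that the conclusion holds uniformly for all $s \geq 0$ along the stem rather than merely at $s = 0$. Once the bookkeeping is pinned down, the argument assembles into a short chain of the lemmas already developed in this section.
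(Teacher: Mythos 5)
Your argument follows the paper's proof step for step: it reduces the nonvanishing of $RE_\infty$ via Lemma \ref{lem:Bockstein-Adams} and Boardman's criterion \eqref{eq:RE} to showing $\lim^1_r {}_\ell(\ell^r H^{2,\ast}(F;\Z_{(\ell)})) \neq 0$, and then chains together Lemmas \ref{lem:num-field}, \ref{lem:B-split}, and \ref{lem:B-nonvanish} in exactly the same way. The only addition is that you make the index translation and the $h_0$-tower propagation argument for the full range $s \geq 0$ explicit, which the paper leaves implicit, but this is supplementary bookkeeping rather than a different route.
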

\begin{proof}
By \Cref{lem:num-field} we have
\[
\lim^1_r {}_{\ell}(\ell^r \cdot H^{2,w+1}(F;\Z_{(\ell)}))
\cong  \lim^1_r {}_{\ell}(\ell^r\cdot \oplus_{\mathfrak p} H^{1,w}(k({\mathfrak p});\Z_{(\ell)})).
\]
The right hand side surjects onto
$\lim^1_r {}_{\ell} (\ell^r\cdot \oplus_{j\geq 1} \Z/\ell^{k_j})$ by \Cref{lem:B-split},
which is nonzero by \Cref{lem:B-nonvanish}.
\end{proof}

\begin{corollary}
\label{cor:S-notconv}
The motivic Adams spectral sequence for the sphere spectrum has nonvanishing derived $E_\infty$-term in tridegrees $(s, t, w)$ with $t - s = -1, w < -1, s \gg 0,$ over any number field.
That is,
$RE_\infty^{s,t,w}(\S) \neq 0$ when $t-s=-1,w < -1, s \gg 0$.
In particular the motivic Adams spectral sequence for the sphere spectrum over a number field is \emph{not} strongly convergent.
\end{corollary}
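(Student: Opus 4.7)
The plan is to deduce the result from \Cref{lem:HZ-notconv} by transporting the non-vanishing of the derived $E_\infty$-term from $\HZ$ to $\S$, using the $E_2$-page comparison in high filtration from \Cref{sec:2} and \Cref{sec:odd}. A number field has $\vcd = 2 < \infty$, so the hypotheses of \Cref{thm:2-iso} are satisfied, as are those of \Cref{thm:ell-iso} at any odd prime.

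First, I would fix a prime $\ell$. For odd $\ell$, \Cref{thm:ell-iso} gives that the unit map $\S \to \HZ$ induces an isomorphism $E_2^{s,t,w}(\S) \xrightarrow{\cong} E_2^{s,t,w}(\HZ)$ in each fixed bidegree $(t-s,w)$ for $s \gg 0$. Applying this simultaneously at stems $t-s \in \{-2,-1,0\}$ with $w < -1$ fixed, I pick a common threshold $N = N(w)$ so the iso holds at each of these three stems for all $s \geq N$. At the prime $2$, the analogous iso with $\wHZ$ in place of $\HZ$ is given by \Cref{thm:2-iso}; non-vanishing of $RE_\infty(\wHZ)$ will then follow from $RE_\infty(\HZ) \neq 0$ via the splitting of the motivic Adams spectral sequence for $\wHZ$ in positive filtration into pieces coming from $\HZ$ and $\HW$. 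This splitting is already visible on the $E_2$-page in \Cref{cor:Ext-wHZ}, and it is promoted to a splitting of the whole spectral sequence by the same $h_0$- and $h_1$-linearity argument used in the proof of \Cref{lem:Adams-ell-n}.

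Second, because the iso on $E_2$-pages is induced by a map of spectra, it is compatible with all subsequent differentials. The differentials entering and leaving a tridegree at stem $t-s = -1$ involve only tridegrees at stems $0$ and $-2$ at the same weight, and the iso on $E_2$-pages holds at these stems for $s \geq N$. An induction on $r$ then shows that the map of motivic Adams spectral sequences is an isomorphism on $E_r$-pages at $(s,t,w)$ with $t-s = -1$, $w < -1$, for $s \gg 0$. Passing to $\lim^1_r$ of the tower of infinite cycles yields $RE_\infty^{s,t,w}(\S) \cong RE_\infty^{s,t,w}(\HZ)$ (respectively $\cong RE_\infty^{s,t,w}(\wHZ)$ at the prime $2$) in this range, and combining with \Cref{lem:HZ-notconv} finishes the argument.

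The hard part will be formalizing the propagation of the $E_2$-iso to an iso on each $E_r$-page, since for $r$ close to $s$ the source $(s-r,t-r+1,w)$ of an entering $d_r$-differential has filtration below the iso threshold $N$. The key input here is the $h_0$-periodicity of the motivic Adams spectral sequence for $\HZ$ recorded in \eqref{eq:E2-HZ}: every class in sufficiently high filtration of $E_2(\HZ)$ is an $h_0$-multiple of a class within the iso range, and the map of spectral sequences is $h_0$-equivariant, so differentials in the low-filtration tail are determined by their behavior in the iso range and the comparison propagates correctly.
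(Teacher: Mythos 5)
Your high-level strategy---transport $RE_\infty(\HZ) \neq 0$ from \Cref{lem:HZ-notconv} across the high-filtration $E_2$-isomorphisms of \Cref{thm:2-iso} and \Cref{thm:ell-iso}---is the same as the paper's, but the mechanism you propose for making the transport work is genuinely different, and in its present form it has a gap.

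You correctly identify the obstruction: for $r$ comparable to $s$, the source $(s-r, t-r+1, w)$ of an incoming $d_r$-differential sits in filtration below any fixed threshold $N$, so a naive induction over $r$ has an unboundedly growing threshold and does \emph{not} yield $E_r^{s,t,w}(\S) \cong E_r^{s,t,w}(\HZ)$ for a single $s \gg 0$ uniform in $r$. Your proposed fix via $h_0$-periodicity does not close this. The $h_0$-periodicity $E_2^{s,t,w}(\HZ) \xrightarrow{\cong} E_2^{s+1,t+1,w}(\HZ)$ recorded in \eqref{eq:E2-HZ} is a statement about the \emph{target}, and the map of spectral sequences is $h_0$-equivariant, but what is actually needed is control on the $\S$-side in the low-filtration range $s-r < N$, where $h_0$-multiplication on $E_2^{s,t,w}(\S)$ is not known to be injective or surjective (the isomorphism with $E_2(\HZ)$ only holds for $s \geq N$, so $h_0$-periodicity of the target tells you nothing below $N$). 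In particular, the phrase "differentials in the low-filtration tail are determined by their behavior in the iso range" has not been justified: you would need something like injectivity of $h_0^k : E_r^{s-r,\ast,w}(\S) \to E_r^{s-r+k,\ast,w}(\S)$ into the iso range, and that is exactly the unknown.

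The paper takes a different route that sidesteps this entirely. Rather than propagating an $E_2$-iso to an $E_r$-iso, it observes that the kernel of $E_2^{s,t,w}(\S) \to E_2^{s,t,w}(\HZ)$ is \emph{finite} for every $(s,t,w)$ with $t - s < 0$ and $w < -1$, not just $s \gg 0$. This finiteness comes from the structure theory of \Cref{lem:ext}: over a number field $\vcd(F) = 2$, so the module $C = \coker((\Z/2[\rho] \tensor k_2) \to k_*)$ from \eqref{eq:C-resolution} satisfies $C_k = 0$ for $k > 2$, which forces the $\Tor_1$-term in \Cref{lem:ext} to vanish and the remaining $(-\tensor_{\Z/2[\rho]} k_*)$-discrepancy to be supported on the finitely many groups $k_n(F)$ with $n \leq 2$ (plus the finite portion coming from $k_n$, $n > 2$, which is finite since $k_n(F)$ is finite for $n > 2$ over a number field). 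A tower map with finite kernel and cokernel in every stage preserves non-vanishing of $\lim^1$, which is how the paper gets $RE_\infty(\S) \cong RE_\infty(\HZ)$. This uniform finiteness for all $s$ is the ingredient your argument lacks, and without it your induction on $r$ does not terminate.

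A smaller point: at $\ell = 2$ you introduce the $\wHZ$-splitting of \Cref{cor:Ext-wHZ} and \Cref{lem:Adams-ell-n}, but this is unnecessary here. In the range $t - s = -1$, $w < -1$ one has $t - s - w > 0$, so by \Cref{cor:Ext-wHZ} the $\HW$-contribution to $\Ext(\wHZ)$ vanishes and $E_2(\S) \cong E_2(\HZ)$ directly (up to the filtration shift), which is what the paper uses.
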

\begin{proof}
The kernel of $E_2^{s,t,w}(\S) \to E_2^{s,t,w}(\HZ)$ is finite for $t-s < 0, w < -1$. Indeed, over a number field the $\Z/2[\rho]$-module $C$ defined in \eqref{eq:C-resolution} is such that $C_3 = 0$. Hence, the $\Tor$-term in \Cref{lem:ext} is zero.
Similarly, the $-\tensor k_*$-term of in \Cref{lem:ext} only sees the part of $k_n(F)$ for $n > 2$, which is finite.
Hence, since we have an isomorphism
$E_2^{s,t,w}(\S) = E_2^{s,t,w}(\HZ)$ for $t - s, w$ fixed and $s \gg 0$ (\Cref{thm:2-iso}, \Cref{thm:ell-iso}), we get that $RE_\infty^{s,t,w}(\S) = RE_\infty^{s,t,w}(\HZ)$ for $t - s = -1, w < -1, s \gg 0$, which is nonzero by \Cref{lem:HZ-notconv}.
\end{proof}

\section{Bounds on exponents of motivic stable stems}
\label{sec:bounds}
As a simple corollary of \Cref{lem:GI}, \Cref{lem:ext} and \Cref{cor:general-bockstein-ss}
we get some very coarse bounds on the exponents of the stable motivic homotopy groups.
This is a partial answer to a problem posed in \cite[p.~2]{ALP}.
\begin{corollary}
\label{cor:tors-2}
Let $F$ be a perfect field of characteristic not 2 with finite virtual cohomological dimension.
If $t > 1$ and $t - w > 0$ the exponent of $\pi_{t,w}(\S_{2,\eta}^\wedge)$ is bounded by $2^{\max\{\lceil(t-w + 1)/2\rceil, t + \vcd(F)\}}$.
\end{corollary}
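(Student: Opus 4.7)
The plan is to combine strong convergence of the motivic Adams spectral sequence in positive stems (\Cref{cor:strong}), the identification of $E_2(\S)$ with $E_2(\wHZ)$ in high filtration (\Cref{thm:2-iso}), and the vanishing of $\Ext$ for $\wHZ$ (\Cref{cor:Ext-wHZ}), using that $h_0 \in \Ext^{1,(1,0)}_{\A_\star}(H_\star, H_\star)$ detects multiplication by $2$.

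First, since $t > 1$ the motivic Adams spectral sequence for $\S$ converges strongly by \Cref{cor:strong}. Mantovani's theorem, applicable since $F$ is perfect, identifies the abutment with $\pi_{t,w}(\S_{2,\eta}^\wedge)$, so this group carries a decreasing exhaustive Hausdorff Adams filtration $\{F^s\}_{s \geq 0}$ with associated graded $F^s/F^{s+1} \cong E_\infty^{s,t+s,w}(\S)$.

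Next, set $N := \max\{\lceil (t-w+1)/2 \rceil,\, t + \vcd(F)\}$. The plan is to show $E_\infty^{s,t+s,w}(\S) = 0$ for all $s \geq N$. For such $s$, \Cref{thm:2-iso} yields $E_2^{s,t+s,w}(\S) \cong E_2^{s,t+s,w}(\wHZ)$, and by \Cref{cor:Ext-wHZ} the latter group splits for $s > 0$ as
\[
\Ext^{s,(t+s,w)}_{\A^\star}(H^\star(\HW), H^\star) \,\oplus\, \Ext^{s-1,(t+s-1,w)}_{\A^\star}(H^\star(\HZ), H^\star).
\]
The first summand is concentrated on the Milnor-Witt line $t-w = 0$ and hence vanishes by the hypothesis $t - w > 0$; the second summand is concentrated on the zero-stem line and hence vanishes by $t > 0$. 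Therefore $E_\infty^{s,t+s,w}(\S) = 0$ for all $s \geq N$, and combined with the Hausdorff property this forces $F^N = 0$.

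Finally, since multiplication by $2$ on $\S$ is detected by $h_0$, we have $2 \cdot F^s \subseteq F^{s+1}$; iterating gives $2^N \cdot \pi_{t,w}(\S_{2,\eta}^\wedge) \subseteq F^N = 0$, which is the claimed exponent bound. The hard part will be verifying that $N$ really exceeds the implicit threshold in \Cref{thm:2-iso}: this requires tracking the bidegrees of generators in the $\Z/2[\rho]$-free resolution of the cokernel $C$ appearing in the proof of \Cref{thm:2-iso}, whose generators lie in degrees at most $\vcd(F) + 1$, and using \Cref{lem:GI} together with \Cref{prop:S-wHZ} at each shifted bidegree to confirm the isomorphism holds for $s \geq N$. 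Once this arithmetic check is in place the rest of the argument is formal.
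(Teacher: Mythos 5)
Your formal skeleton is correct: for $t > 1$ the Adams filtration on $\pi_{t,w}(\S_{2,\eta}^\wedge)$ is Hausdorff by \Cref{cor:strong} and Mantovani's identification of the abutment, $h_0$ detects multiplication by $2$ so $2\cdot F^s \subseteq F^{s+1}$, and vanishing of $E_\infty^{s,t+s,w}(\S)$ for all $s \geq N$ would force $F^N = 0$ and hence the exponent bound $2^N$. But the route you take to the $E_2$-vanishing is different from the paper's, and it leaves the only quantitative step unresolved. The paper does not pass through \Cref{thm:2-iso} or \Cref{cor:Ext-wHZ} at all: it applies \Cref{lem:ext} directly with $X = \S$, expressing $\Ext^{s,(t+s,w)}_{\A_\star}(H_\star, H_\star)$ as an extension of $\Ext^{s}_{\A_\star^\R}(H_\star^\R,H_\star^\R) \tensor_{\Z/2[\rho]} k_*$ by a $\Tor_1^{\Z/2[\rho]}$ term, and feeds the Guillou--Isaksen vanishing region \Cref{lem:GI} into both ends. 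The $\vcd(F)$-shift appears because $k_*$ is generated as a $\Z/2[\rho]$-module in degrees $\leq \vcd(F)$ (via \Cref{lem:vcd}), and the explicit constant $N$ falls out with no further work.

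The detour through $\wHZ$ is where the genuine gap sits. \Cref{thm:2-iso} and its input \Cref{prop:S-wHZ} are both stated only as ``$s \gg 0$'' results with no effective threshold. Your plan to ``track the bidegrees of generators in the $\Z/2[\rho]$-free resolution of $C$'' does not by itself close this, because in the proof of \Cref{thm:2-iso} the contributions indexed by the generating sets $I$ and $J$ are controlled by invoking \Cref{prop:S-wHZ}, whose own threshold you would still have to make effective; doing so amounts to re-extracting the relevant quantitative piece of \Cref{lem:GI}. But once you have gone that far, applying \Cref{lem:ext} directly to the sphere already yields $\Ext^{s,(t+s,w)}_{\A_\star}(H_\star,H_\star) = 0$ for $s$ in the required range with no reference to $\wHZ$ whatsoever. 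So the $\wHZ$-comparison --- which is the right tool for the qualitative convergence statement \Cref{cor:strong}, where one only needs eventual vanishing --- is an unnecessary and opaque intermediary here, and as written your argument defers exactly the content the corollary claims.
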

\begin{proof}
Let $n = \vcd(F)$.
By \Cref{lem:ext}
$\Ext^{s}_{\A_\star}(H_\star, H_\star)$
is an extension of
$\Ext^{s}_{\A_\star^\R}(H_\star^\R, H_\star^\R) \tensor_{\Z/2[\rho]} k_*$
and
$\Tor_1^{\Z/2[\rho]}(\Ext^{s+1}_{\A_\star^\R}(H_\star^\R, H_\star^\R), k_*)$.
In tridegree $(s, t, w)$ and Milnor-Witt degree $m = t - s - w$,
the first group is zero for $s > t - s + 1 + n$ and $s > \frac{1}{2}(m + 3)$ by \Cref{lem:GI}.
The second group is zero for $s > t - s + n$ and $s + 1 > \frac{1}{2}(m+2)$.
Hence, $\Ext^{s,(t,w)}_{\A_\star}(H_\star, H_\star)$ is zero for $s > t - s + 1 + n$ and $s > \frac{1}{2}(m + 3)$.
\end{proof}

\begin{corollary}
\label{cor:tors-odd}
Let $\ell$ be an odd prime and $F$ a perfect field of characteristic not $\ell$.
If $t > 0$ and $t - w > 0$ the exponent of $\pi_{t,w}(\S_{\ell,\eta}^{\wedge})$ is bounded by $\ell^{\lceil(t - w)/(\ell - 2)\rceil}$.
\end{corollary}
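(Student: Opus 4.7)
The approach mirrors \Cref{cor:tors-2}. The exponent bound reduces to a filtration-vanishing statement: if $\Ext^{s,(t+s,w)}_{\A_\star}(H_\star, H_\star) = 0$ for every $s \geq N$, then, since multiplication by $\ell$ raises Adams filtration by at least one and the $H$-completion gives a complete Hausdorff filtration, the exponent of $\pi_{t,w}(\S_{\ell,\eta}^{\wedge})$ is bounded by $\ell^N$. It therefore suffices to exhibit such an $N$ with $N \leq \lceil (t-w)/(\ell-2) \rceil$.

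The plan is to produce $N$ using the Bockstein spectral sequence of \Cref{cor:general-bockstein-ss}, whose $E_1$-term in tridegree $(s, t+s, w)$ is a direct sum, over decompositions $(t+s, w) = (t',w') + (t'',w'')$, of tensor products $(B^n H_\star / B^{n+1} H_\star)_{(t'',w'')} \tensor \Ext^{s,(t',w')}_{\Top}$. The bigraded support of $H_\star$ enforces $w'' \leq t'' \leq 0$, while each generator $[\xi_i]$ or $[\tau_i]$ of the topological cobar complex satisfies $t' - 2w' \geq 0$, so the same holds for any nonzero class in $\Ext^{s,(t',w')}_{\Top}$. Substituting $t' = t + s - t''$ and $w' = w - w''$ into $t' \geq 2w'$ and using $w'' \leq t''$ yields $-t'' \leq (t+s) - 2w$, and hence $t' - s \leq s + 2(t-w)$.

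Since $t' - s \geq t > 0$ automatically, Adams's vanishing \eqref{eq:adams-iso} forces $(2\ell-3)s \leq t'-s$ whenever $\Ext^{s,(t',w')}_{\Top}$ is nonzero. Combining this with the bound just derived and simplifying gives $(\ell-2)s \leq t-w$. Therefore the $E_1$-page, and by strong convergence $\Ext^{s,(t+s,w)}_{\A_\star}(H_\star, H_\star)$ itself, vanishes whenever $s > (t-w)/(\ell-2)$, producing the required $N$.

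The main obstacle I foresee is mild book-keeping at the boundary: the strict inequality $s > (t-w)/(\ell-2)$ only gives vanishing from $s = \lfloor (t-w)/(\ell-2) \rfloor + 1$ onwards, which matches $\lceil (t-w)/(\ell-2) \rceil$ whenever $(t-w)/(\ell-2) \notin \Z$; in the integer boundary case an additional argument is needed to avoid an off-by-one, for instance by noting that any potentially surviving class must be an $h_1$-multiple sitting on the edge of the Adams vanishing line.
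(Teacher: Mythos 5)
Your argument is correct and is essentially identical to the paper's proof: both invoke the generalized Bockstein spectral sequence of \Cref{cor:general-bockstein-ss} to express $\Ext^{s,(t+s,w)}_{\A_\star}(H_\star, H_\star)$ as a subquotient of $H_\star$-twisted copies of topological $\Ext$, both read off the inequalities $t''\leq 0$, $w''\leq t''$, $t'\geq 2w'$ from the bigraded supports, and both feed the resulting bound $t'-s \leq s + 2(t-w)$ into Adams's vanishing \eqref{eq:adams-iso} to arrive at $(\ell-2)s \leq t-w$. The off-by-one concern you flag when $(t-w)/(\ell-2)$ is an integer is in fact also latent in the paper's own proof (which passes from ``$(\ell-2)s>t-w$'' to the stated ceiling without comment), so this is a shared caveat rather than a gap in your argument relative to the paper's.
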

\begin{proof}
By \Cref{cor:general-bockstein-ss} $\Ext^{s,(t, w)}_{\A_\star}(H_\star, H_\star)$ is a subquotient of
\[
\left\{\bigoplus_{(s,t,w) = (s',t',w') + (0, t'', w'')} \left(\frac{B^nH_\star}{B^{n+1}H_\star}\right)_{t'',w''} \tensor \Ext^{s',(t',w')}_{\Top} \right\}_n.
\]
As in \Cref{thm:ell-iso} we would need $2(t-w) \geq t'$ for this to be nonzero.
But if $(2\ell - 3)s > 2(t - w) - s \geq t' - s$ this is zero by \eqref{eq:adams-iso}.
That is, if $(\ell - 2)s > t - w$.
Hence the exponent of $\pi_{t,w}(\S_{\ell,\eta}^{\wedge})$ is bounded by $\ell^{\lceil(t - w)/(\ell - 2)\rceil}$.
\end{proof}

\section{The zeroth motivic stable stem}
\label{sec:zero-line}
In this section we compute the $(2,\eta)$-completed motivic zero line over perfect fields of characteristic not 2.
That is, $\lim_k K^{MW}_{-n}/(2^k, I^k) \to \oplus_n \pi_{n,n}(\S_{2,\eta}^{\wedge})$ is an isomorphism.
The computation is essentially the same as Morel's computation of $\pi_{0,0}(\S)$ in \cite{Morel:adams}. %
We think it is interesting that Morel's pull-back square \cite[Theoreme 5.3]{Morel:witt} for Milnor-Witt $K$-theory shows up naturally as part of the computation. 
That is, from \eqref{eq:MW=0} we get that the $E_\infty$-page converging to $\pi_{-n,-n}(\S_{2,\eta}^{\wedge})$ is a direct sum of the filtration quotients of Milnor K-theory with the filtration quotients of the fundamental ideal filtration,
glued together at the first stage of the filtration.
We write $E_{2}^{MW=m}$ for $\oplus_{t-s-w=m}E_2^{s,t,w}$.

Recall that the $n$'th power of the fundamental ideal is generated by the $n$-fold Pfister forms.
There is a canonical ring map \cite[p.~253]{Morel:pi0}
\[
\Phi: K^{MW}_{-*} \to \pi_{*,*}(\S),
\]
which maps a compatible pair of generators $(\!\langle\!\langle a_1,\dots,a_{n+k} \rangle\!\rangle, [a_1',\dots,a_n']) \in I^{n}\times_{I^{n}/I^{n+1}} K^M_n = K^{MW}_{n}$ to
\begin{equation}
\label{eq:MW-map}
\Phi(\!\langle\!\langle a_1,\dots,a_{n+k} \rangle\!\rangle, [a_1',\dots,a_n'])
=
\begin{cases}
[a_1',\dots,a_n'] & k = 0 \\
\eta^k[a_1,\dots a_{n+k}] & k > 0 
\end{cases}
\in \pi_{-n,-n}(\S),
\end{equation}
where $[a_1,\dots,a_n] = [a_1]\wedge\dots\wedge[a_n]$, for $[a] : S^{0,0}\to\Gm$ the map which sends the non-basepoint to $a$,
and $\eta\in \pi_{1,1}(\S)$ the Hopf-map.
We determine the filtration of $\pi_{n,n}(\S_{H}^\wedge)$ explicitly,
and show that $\Phi$ induces an isomorphism after $(2,I)$-completion.
Note that to identify $\pi_{n,n}(\S_{H}^\wedge)$ with $\pi_{n,n}(\S_{2,\eta}^\wedge)$ we need to know that $\pi_{*=*}(\S)/(2, \eta) \cong \pi_{*=*}(\HZ/2)$ \cite[Assumption 5.1]{Mantovani}, which as far as we know relies  on Morel's computation of $\pi_{*=*}(\S)$.

\begin{theorem}
\label{thm:zero-stem}
Over a perfect field of %
characteristic not 2, the canonical map $\Phi: K^{MW}_{-n} \to
\pi_{n,n}(\S)$ induces an isomorphism $\hat{\Phi} : \lim_s
K^{MW}_n/(2^s, I^s) \xrightarrow{\cong}
\pi_{-n,-n}(\S_{2,\eta}^\wedge)$.
\end{theorem}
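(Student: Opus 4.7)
The plan is to analyze the motivic Adams spectral sequence for $\S$, which converges strongly in Milnor-Witt degree $0$ by \Cref{lem:strong-MW0}, and to identify the associated graded of the Adams filtration on $\pi_{-n,-n}(\S_{2,\eta}^\wedge)$ with that of the $(2^s, I^s)$-adic filtration on $K^{MW}_{-n}$. The map $\Phi$ will then lift to an isomorphism $\hat{\Phi}$ on completions.

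First, I would compute $E_2^{s, s-n, -n}$ of the motivic Adams spectral sequence for $\S$. Over $\R$, a direct cobar argument analogous to the $t-s=0$ analysis in the proof of \Cref{prop:S-wHZ}, run now at $t-s=-n$, shows that the Milnor-Witt degree $0$ classes in this tridegree are generated modulo boundaries by $\rho^n[\tau_0]^s$ and $\rho^{s+n}[\xi_1]^s$. Applying \Cref{lem:ext} then extends the description to an arbitrary field $F$ of characteristic $\neq 2$: the $h_0 = [\tau_0]$-tower, tensored with $k_*$ over $\Z/2[\rho]$, detects mod $2$ Milnor $K$-theory, while the $h_1 = [\xi_1]$-tower, combined with the Milnor conjecture $I^k/I^{k+1}\cong k_k$, detects the graded pieces of the fundamental ideal filtration.

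Next I would verify $E_\infty = E_2$ along the Milnor-Witt degree $0$ column. Outgoing $d_r$-differentials decrease Milnor-Witt degree by $1$, so they land in Milnor-Witt degree $-1$ where $E_2$ vanishes. Incoming differentials from Milnor-Witt degree $1$ are controlled in high Adams filtration by \Cref{thm:2-iso} together with \Cref{cor:Ext-wHZ}: on $\wHZ$ the high-filtration $\Ext$-group splits as a single $h_0$-tower plus a single $h_1$-tower with no room for such differentials, and the isomorphism with the sphere spectrum transports this rigidity back. In low filtration, incoming differentials are pinned down by base change from $\R$, where Morel's computation of $\pi_{0,0}(\S_{2,\eta}^\wedge)$ is known. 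I would then check that $\Phi$ sends a Milnor symbol $[a_1, \dots, a_n]$ to an Adams filtration $0$ class detected by the $h_0$-tower, and $\eta^k[a_1, \dots, a_{n+k}]$ to an Adams filtration $k$ class detected by the corresponding $h_1^k$-multiple (using that $\eta$ is detected by $h_1$). Morel's pullback $K^{MW}_n \cong K^M_n \times_{k_n} I^n$ then glues the two towers correctly at the bottom filtration, giving compatible isomorphisms $K^{MW}_{-n}/(2^s, I^s) \xrightarrow{\cong} \pi_{-n,-n}(\S_{2,\eta}^\wedge)/F^s$ on each truncation.

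Passing to the inverse limit over $s$ finishes the argument: the Adams filtration on $\pi_{-n,-n}(\S_{2,\eta}^\wedge)$ is complete since the abutment is already $H$-complete, and $\lim_s K^{MW}_{-n}/(2^s, I^s)$ is complete by definition, so the truncation-level isomorphisms assemble into $\hat{\Phi}$. The principal obstacle I expect is the differential analysis of step two, specifically confirming that the $h_1$-tower $\rho^{s+n}[\xi_1]^s$ actually survives to $E_\infty$; the safety net is \Cref{thm:2-iso} combined with \Cref{cor:Ext-wHZ}, which removes any room for late-page differentials in high Adams filtration, while Morel's low-dimensional input handles the remaining finite-filtration cases.
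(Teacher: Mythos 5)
Your overall strategy mirrors the paper's: identify the $E_2$-page in Milnor--Witt degree $0$ via \Cref{lem:ext}, establish strong convergence via \Cref{lem:strong-MW0}, show $\Phi$ respects the filtrations, and pass to the limit using Boardman. The problem is the assertion in your second step that ``$E_\infty = E_2$ along the Milnor--Witt degree $0$ column,'' which is false for a general field and contradicts the very structure you are trying to recover.

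Concretely: the $h_1$-tower $k_*[h_1]\subset E_2^{MW=0}$ does indeed receive no differentials (the argument you gestured at via $h_1$-linearity is correct), but the $h_0$-tower $k_*[h_0]/\rho h_0$ \emph{does} receive differentials from ${}_\rho k_*[h_0]\subset E_2^{MW=1}$. These are exactly the differentials of the motivic Adams spectral sequence for $\HZ$, which by \Cref{lem:Bockstein-Adams} is the $2$-Bockstein spectral sequence for motivic cohomology, and that spectral sequence has nontrivial differentials whenever $K^M_n(F)$ has $2$-torsion (e.g.\ over number fields). The contribution of the $h_0$-tower to $E_\infty^{s,s-n,-n}$ is therefore $2^s K^M_n/2^{s+1}K^M_n$, the $2$-adic associated graded of integral Milnor $K$-theory, rather than $k_n$. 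This is essential: if $E_\infty$ were $E_2$, the abutment would have $k_n$ in each positive filtration, not the graded pieces of $K^M_n\otimes\Z_2$, and you could not match the $(2^s,I^s)$-filtration on $K^{MW}_n$. Your appeal to \Cref{thm:2-iso} and \Cref{cor:Ext-wHZ} does not close this gap, since those results compare $E_2$-pages, and the $\HZ$-summand of $\Ext_{\A^\star}(H^\star(\wHZ),H^\star)$ precisely carries these Bockstein differentials. The fix is to use the identification with the $\HZ$-Adams spectral sequence on the $h_0$-tower to import the Bockstein differentials, then read off $E_\infty^{s,s-n,-n} \cong k_{s+n}\oplus 2^s K^M_n/2^{s+1}K^M_n$ and match this with the filtration quotients of $F^s = I^{n+s}\oplus 2^s K^M_n$, as the paper does.

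Two smaller remarks. First, the statement that ``$\Phi$ sends a Milnor symbol to an Adams filtration $0$ class detected by the $h_0$-tower'' is imprecise: a Milnor symbol lands in filtration $0$ and is detected in $E_\infty^{0,-n,-n}\cong k_n$, which is neither the $h_0$- nor the $h_1$-tower; the glueing datum of Morel's pullback square is what identifies this $k_n$ simultaneously as $I^n/I^{n+1}$ and $K^M_n/2K^M_n$. Second, your closing appeal to $H$-completeness of the abutment to conclude completeness of the Adams filtration is not automatic; one really needs strong convergence, which \Cref{lem:strong-MW0} supplies, together with \cite[Theorem 2.6]{Boardman} as in the paper.
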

\begin{proof}
In Milnor-Witt degree 0 \Cref{lem:ext} tells us that the $E_2$-page of the motivic Adams spectral sequence for the sphere spectrum converging to $\oplus_n \pi_{n,n}(\S^{\wedge}_{2,\eta})$ is
\begin{equation}
\label{eq:MW=0}
E_2^{MW=0} = k_*[h_1, h_0]/(\rho h_0, h_0h_1).
\end{equation}
Since the differentials are $h_1$-linear there cannot be any entering differentials from $E_2^{MW=1}$ which hits the subalgebra
$
k_*[h_1] \subset E_2^{MW=0}.
$
Furthermore, for degree reasons there cannot be any differentials on the
$\Ext^{s}_{\A_\star^\R}(H_\star^\R, H_\star^\R) \tensor_{\Z/2[\rho]} k_*$ part of $E_2^{MW=1}$, cf.~\cite[Figure 3]{DI:Real}.
Hence, there can only be differentials from ${}_{\rho}k_*[h_0] \subset E_2^{MW=1}$ to $k_*[h_0]/\rho h_0 \subset E_2^{MW=0}$.
But on these $k_*$-submodules we have an isomorphism with the $E_2$-page of the motivic Adams spectral sequence for $\HZ$.
That is, the bottom horizontal arrow in the diagram below is an isomorphism
\[
\begin{tikzcd}
E_2^{MW=1}(\S) \ar[r] &  E_2^{MW=1}(\HZ) \\
{}_{\rho}k_*[h_0] \ar[r, "\cong"]\ar[u, hook] & {}_{\rho}k_*[h_0], \ar[u, "="]
\end{tikzcd}
\]
and similarly for $E_2^{MW=0}$ and $k_*[h_0]/\rho h_0$.
Hence, the differentials are the same on ${}_{\rho}k_*[h_0]$,
and we get the following description of the $E_\infty^{MW=0}$-page
\[
E_\infty^{s,t,w}(\S) \cong k_{2s-t} \oplus E_{\infty}^{s,t,w}(\HZ)
\cong k_{2s-t} \oplus 2^s K^M_{s-t}/2^{s+1}K^M_{s-t}, s > 0,\, t - s- w = 0, w=-n,
\]
and $E_\infty^{0,-n,-n}(\S) \cong k_{n}$.
The identification of $E_{\infty}^{s,t,w}(\HZ)$ was obtained in \Cref{sec:Bockstein}.

Consider the filtered group
\[
F^0 = K^{MW}_n = I^{n}\times_{I^n/I^{n+1}}K^M_n,\qquad
F^s = I^{n+s} \oplus 2^s K^M_n,\, s > 0,
\]
with filtration quotients
\[
F^{s}/F^{s+1} = I^{n+s}/I^{n+s+1}\times_{I^n/I^{n+1}}2^s K^M_n/2^{s+1}K^M_n = I^{n+s}/I^{n+s+1}\oplus 2^s K^M_n/2^{s+1}K^M_n,\, s>0.
\]
We want to show that $\Phi$ induces a map of filtered groups with isomorphic filtration quotients.

Let $F^s\S$ be the Adams filtration of $\pi_{-n,-n}(\S)$.
By definition $\Phi(F^0) \subset F^1\S$.
Assume inductively $\Phi(F^s) \subset F^{s}\S$. We want to show $\Phi(F^{s+1}) \subset F^{s+1}\S$.
It suffices to show that $\Phi(F^{s+1})$ maps to zero in $F^s\S/F^{s+1}\S \cong E_\infty^{s,s-n,-n}$.
That is,
the composite $f: I^{n+s+1}\oplus 2^{s+1}K^M_n \to F^s\S \to F^s\S/F^{s+1}\S \cong E_\infty^{s,s-n,-n}(\S) \cong k_{n+s} \oplus 2^{s}K^M_n/2^{s+1}K^M_n$ is zero.
Since $E_\infty^{s,s-n,-n}(\S) \to E_\infty^{s,s-n,-n}(\HZ)$ is the identity map on $2^{s}K^M_n/2^{s+1}K^M_n$,
the projection of $f$ to $2^{s}K^M_n/2^{s+1}K^M_n$ is zero on $2^{s+1}K^M_n$.
The projections is also zero on $x \in I^{n+s+1}$, since $x$ is mapped to a multiple of $\eta$ by \eqref{eq:MW-map}, and $\eta$ is zero on $\HZ$.
Hence it suffices to show that the projection of $f$ to $k_{n+s}$ is zero.
For a generator $\langle\!\langle a_1,\dots,a_{n+s+1}\rangle\!\rangle \in I^{n+s+1}$ this follows from \Cref{lem:kn+s}.
A generator $2^{s+1}x \in 2^{s+1}K^M_n$ is mapped by $\Phi$ to an element of $\pi_{-n,-n}(\S)$ in Adams filtration $s+1$, hence $f(2^{s+1}x)$ is zero in $k_{n+s}$.
So $\Phi$ is a map of filtered groups.

Furthermore $\Phi$ induces an isomorphism on the filtration quotients. %
Indeed, $I^{n+s}/I^{n+s+1} \cong k_{n+s}$ by \Cref{lem:kn+s}.
The composite $K^{MW}_n \to \pi_{-n,-n}(\S) \to \pi_{-n,-n}(\HZ) = K^{M}_n$ is the canonical projection, so by \Cref{sec:Bockstein} the filtration quotients are isomorphic on $2^s K^M_n/2^{s+1}K^M_n$ as well.

Finally, if we complete $K^{MW}_n$ with respect to its filtration (if the field has finite virtual cohomological dimension this is 2-completion) $\Phi$ induces an isomorphism \cite[Theorem 2.6]{Boardman}.
That is, we have an isomorphism
\[
\hat{\Phi} : \lim_s K^{MW}_n/(2^s, I^s) \xrightarrow{\cong} \pi_{-n,-n}(\S_{2,\eta}^\wedge),
\]
since \eqref{eq:MW=0} converges strongly by \Cref{lem:strong-MW0}.
\end{proof}
\begin{lemma}
\label{lem:kn+s}
The composite $I^{n+s} \to F^s\S \to k_{n+s} \cong I^{n+s}/I^{n+s+1}$,
is the canonical map induced by sending a Pfister form $\langle\!\langle a\rangle\!\rangle$ to $[a]$.
\end{lemma}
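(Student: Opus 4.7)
The plan is to unpack $\Phi$ on a Pfister form generator of $I^{n+s}$ and track its image through the Adams filtration to the $k_{n+s}$-summand of the $E_\infty$-page, exploiting the multiplicativity of the Adams spectral sequence together with the fact that $\eta$ is detected by $h_1$.

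A generator $\langle\!\langle a_1,\dots,a_{n+s}\rangle\!\rangle$ of $I^{n+s}$ sits inside $K^{MW}_{-n} = I^{n}\times_{I^{n}/I^{n+1}} K^M_n$ with trivial second component since $s \geq 1$. Hence formula \eqref{eq:MW-map} yields
\[
\Phi(\langle\!\langle a_1,\dots,a_{n+s}\rangle\!\rangle) = \eta^{s}[a_1,\dots,a_{n+s}] \in \pi_{-n,-n}(\S).
\]
The Hopf map $\eta \in \pi_{1,1}(\S)$ is detected by $h_1 \in E_2^{1,(2,1)}(\S)$ (represented by $[\xi_1]$ in the cobar complex), while each symbol $[a_i] \in \pi_{-1,-1}(\S)$ is detected on the zero-line by $[a_i] \in E_2^{0,(-1,-1)}(\S) \cong k_1$. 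Multiplicativity of the motivic Adams spectral sequence then shows that $\eta^{s}[a_1,\dots,a_{n+s}]$ has Adams filtration at least $s$ and is detected in $E_\infty^{s,s-n,-n}(\S)$ by the product $h_1^{s}\cdot[a_1]\cdots[a_{n+s}]$.

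Under the splitting $E_\infty^{s,s-n,-n}(\S) \cong k_{n+s} \oplus 2^{s}K^M_n/2^{s+1}K^M_n$ used in the proof of \Cref{thm:zero-stem}, the first summand is precisely the $h_1^s$-component of the subalgebra $k_*[h_1] \subset E_2^{MW=0}$ from \eqref{eq:MW=0}, on which all outgoing and incoming differentials have been shown to vanish by $h_1$-linearity and degree considerations. Hence $h_1^{s}\cdot[a_1]\cdots[a_{n+s}]$ lies in the $k_{n+s}$-summand and projects to $[a_1]\cdots[a_{n+s}] \in k_{n+s}$. By Voevodsky's proof of the Milnor conjecture on quadratic forms, the isomorphism $I^{n+s}/I^{n+s+1} \xrightarrow{\cong} k_{n+s}$ is given precisely by $\langle\!\langle a_1,\dots,a_{n+s}\rangle\!\rangle \mapsto [a_1]\cdots[a_{n+s}]$, matching the desired composite.

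The main technical point to verify is that the product $h_1^{s}\cdot[a_1]\cdots[a_{n+s}]$ lands in the $k_{n+s}\{h_1^{s}\}$-summand rather than acquiring a contribution from the $h_0$-tower summand or being altered by a nontrivial extension; this follows from the direct sum decomposition in \eqref{eq:MW=0} being preserved at $E_\infty$ by the $h_0$- and $h_1$-linearity of differentials already exploited in the proof of \Cref{thm:zero-stem}, combined with the fact that $h_1^{s}$ is not a multiple of $h_0$ and vice versa on the $E_2$-page.
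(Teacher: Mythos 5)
Your proof is correct and takes essentially the same approach as the paper: both begin with formula \eqref{eq:MW-map} to express the image as $\eta^{s}[a_1,\dots,a_{n+s}]$ and then use the detection of $\eta$ by $h_1$ to identify the associated-graded class as $h_1^s[a_1]\cdots[a_{n+s}]$. The only difference is that the paper builds an explicit geometric lift $[h_1]^s\colon S^{s,s}\to\Sigma^{-s,0}\ol{H}^{\wedge s}$ and chases it through a diagram, whereas you appeal directly to the multiplicativity of the motivic Adams spectral sequence; these are two phrasings of the same mechanism, and your version additionally omits the trivial case $n+s\leq 0$ (where $k_{n+s}=0$ or $\Z/2$ and the claim is immediate).
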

\begin{proof}
For $n + s \leq 0$ this is immediate.
By \eqref{eq:MW-map} a Pfister form $\langle\!\langle a_1,\dots,a_{n+s}\rangle\!\rangle \in I^{n+s}$
is mapped to $\eta^{s}[a_1, \dots, a_{n+s}] \in \pi_{-n,-n}(\S)$.
The element $\eta^s$ has a lift to Adams-filtration $s$, which is given by some map $[h_1]^s : S^{s,s} \to \Sigma^{-s,0}\ol{H}^{\wedge s}$.
Taking the composite with $\Sigma^{-s,0}\ol{H}^{\wedge s} \to \Sigma^{-s,0}\ol{H}^{\wedge s}\wedge H$ we get an element representing $h_1^s$ on the $E_1$-page.
Smashing with $[a_1, \dots, a_{n+s}]$ then shows that $\eta^s [a_1, \dots, a_{n+s}]$ is mapped to $h_1^s[a_1, \dots a_{n+s}]$ on the $E_1$-page. I.e., we have a diagram
\[
\begin{tikzcd}
S^{n,n} \ar[r, "="]\ar[d, dashed, "{[h_1]^s[\ul{a}]}"]\ar[dd, bend right=50, "{h_1^s[\ul{a}]}" left] & S^{n,n}\ar[d, "{\eta^s[\ul{a}]}"] \\
\Sigma^{-s,0}\ol{H}^{\wedge s} \ar[r]\ar[d] & S \\
\Sigma^{-s,0}\ol{H}^{\wedge s}\wedge H,
\end{tikzcd}
\]
where the vertical composite is $h_1^s[a_1, \dots, a_n]$.
This element is a permanent cycle, so $\eta^s [a_1, \dots, a_{n+s}]$ is mapped to $h_1^s[a_1, \dots a_{n+1}]$ on the $E_\infty$-page. That is, $\langle\!\langle a_1,\dots,a_{n+s}\rangle\!\rangle$ is mapped to $[a_1, \dots, a_{n+s}] \in k_{n+s}$.
\end{proof}

\begin{remark}
At an odd prime $\ell$ the corresponding computation would give $\lim_k K^M_{n}/\ell^k \cong \pi_{-n,-n}(\S_{\ell,\eta}^{\wedge})$.
This is as expected, since the completion of $W(F)$ at the fundamental ideal and $\ell$ is zero.
For instance for $n < 0$, $\pi_{-n,-n}(\S) = W(F)$ by Morel's theorem \cite[Theorem 6.2.1]{Morel:pi0},
and
$\pi_{-n,-n}(\S_{\ell,\eta}^\wedge)
= \pi_{-n,-n}(\S)_{\ell,\eta}^{\wedge}
= \lim_k W(F)/(\ell^k, I^k)
$.
However $(\ell^k, I^k) = W(F)$ for all $k$, since $2^k \in I^k$. So $\pi_{-n,-n}(\S_{\ell,\eta}^\wedge) = 0$.
We expect the $\wHZ/\ell$-based motivic Adams spectral sequence to fix this, since this spectral sequence is convergent to the $\ell$-completion by the work of Mantovani \cite{Mantovani}.
This spectral sequence should be essentially identical to the $\HZ/\ell$-based motivic Adams spectral sequence,
but with a small modification in Milnor-Witt degree 0 to take care of the Witt-groups.
\end{remark}

\printbibliography

\end{document}